\DeclareMathOperator{\rank}{rank } 
\newtheorem{theorem}{Theorem}
\newtheorem{lemma}{Lemma}
\newtheorem{proposition}[theorem]{Proposition}
\newtheorem{example}{Example}%
\newtheorem{remark}{Remark}%
\newtheorem{definition}{Definition}
\newcommand{\verti}[1]{{\left\vert\kern-0.25ex\left\vert\kern-0.25ex\left\vert #1
\right\vert\kern-0.25ex\right\vert\kern-0.25ex\right\vert}}
\newcommand{\vertiii}[1]{{\left\vert\kern-0.25ex\left\vert\kern-0.25ex\left\vert #1
    \right\vert\kern-0.25ex\right\vert\kern-0.25ex\right\vert}}
\title{
\textbf{Singular fractional double-phase  problems with variable exponent  via Morse's theory}}
\numberwithin{equation}{section}
\date{}
\author{A. Aberqi$^{1}$, A. Ouaziz$^{2}$ \\
$^{1}$Sidi Mohamed ben abdellah university, National School of Applied Sciences\\, Fez, Morocco\\
\href{www.google.com}{\color{blue}ahmed.aberqi@usmba.ac.ma}
\\
$^{2}$Faculty of Sciences Dhar El Mahraz, Sidi Mohamed Ben Abdellah University,\\ Fez Morocco.
$^{2}$  \href{http//:www.gmail.com}{\color{blue}abdesslam.ouaziz1994@gmail.com}$^{2}$}
\begin{document}
\maketitle
\vspace{0.5cm}
\begin{abstract}
	In this manuscript, we deal with a class of fractional non-local problems involving a singular term and  vanishing potential of the form:
\begin{eqnarray*}
\begin{gathered}
\left\{\begin{array}{llll}
 \mathcal{L}^{s_{1}, s_{2}}_{p(\mathrm{x}, .), q(\mathrm{x}, .)}\mathrm{w}(\mathrm{x})&= \displaystyle\frac{g(\mathrm{x}, \mathrm{w}(\mathrm{x}))}{  \mathrm{w}(\mathrm{x})^{\xi(\mathrm{x})}}  + \mathcal{V}(\mathrm{x}) \vert \mathrm{w}(\mathrm{x}) \vert^{\sigma(\mathrm{x})-2}  \mathrm{w}(\mathrm{x})    & \text { in } & \mathcal{U}, \\
\hspace{2cm}  \mathrm{w}&> 0 & \text { in }&  \mathcal{U},\\ 
\hspace{2cm}  \mathrm{w}&=0 & \text { in }&  \mathbb{R}^{N} \backslash \mathcal{U},
\end{array}\right.
\end{gathered}
\end{eqnarray*}
where, $ \mathcal{L}^{s_{1}, s_{2}}_{p(\mathrm{x}, .), q(\mathrm{x}, .)}$  is a $\left(p(\mathrm{x}, .), q(\mathrm{x}, .)\right)-$ fractional double-phase operator with $ s_{1},s_{2 }\in \left( 0, 1\right)$, $g,$ and $\mathcal{V}$ are functions that satisfy some conditions. The strategy of the proof for these results is to approach the problem proximatively and calculate the critical groups. Moreover, using Morse's theory to prove our problem has infinitely many solutions.
\end{abstract}
{\bf Key words:}  Fractional Double phase, singular linearity, Variable exponent, Morse theory, critical groups. \\
{\bf AMS:}	35J60,  58J05, 35R11, 35J75, 35J60, 46E35.
\section{Introduction}
Marston Morse, a mathematician, created Morse's hypothesis in the 1920s. He was a member of the university at the Institute for Advanced Study, and Princeton University released Topological Methods in the Theory of Functions of a Complex Variable in 1947 as part of the Annals of Mathematics Studies series. A well-known publication by theoretical physicist Edward Witten that connects Morse's theory to quantum field theory has garnered a lot of attention for this idea during the past two decades.
Morse's theory and computation of critical groups are useful tools for studying the multiplicity and existence of solutions to nonlinear problems. As far as we know, this method is rarely used in the study of problems of differential equations.  For this, we'll provide a brief overview of the technique.\\  Let  $\displaystyle W$ be a real  Banach space, $\displaystyle \phi\in C^{1}(W, \mathbb{R})$ satisfies the Palais-Smale  condition, and  $\displaystyle c\in \mathbb{R}.$ We consider  the following sets: $$\phi^{c}=\left\lbrace  \mathrm{u}\in W:  \phi(\mathrm{u})\leq c \right\rbrace ,$$
and $$K_{\phi}= \left\lbrace  \mathrm{u}\in W:  \phi^{'}(\mathrm{u})=0 \right\rbrace . $$ The critical groups of $\phi$
 at $\mathrm{u}$ are defined by  $$C_{k}(\phi, \mathrm{u})= H_{k}\big(\phi^{c} \cap U, \phi^{c} \cap U \backslash  \{ \mathrm{u} \}\big),$$
 where $\displaystyle k\in\mathbb{N}, $ $\displaystyle U$ is a neighbourhood of  $\displaystyle \mathrm{u}$ such that $\displaystyle  K_{\phi}\cap U= \{\mathrm{u} \},$ and   $\displaystyle H_{k}$ is the singular relative homology with coefficient in an Abelian group $\displaystyle G$, see  \cite{papageorgiou} for more details. Moreover, the authors in \cite{bartsch} introduced  the critical groups of $\phi$ at infinity by $$C_{k}(\phi, \infty)= H_{k}(W, \phi^{a}),$$ where $\displaystyle a$ is less than all critical values and $\displaystyle k\in\mathbb{N}.$  Concerning the connection between critical groups and critical points of  $\displaystyle \phi.$ Wellem et al. in \cite{willem} proved the following statement:
\begin{enumerate}
\item[1)] If $\displaystyle C_{k}(\phi, \infty) \ncong 0$ for some $\displaystyle k \in \mathbb{N}$, then $\displaystyle \phi$ has critical point $\displaystyle \mathrm{u}$ and satisfies $\displaystyle C_{k}(\phi, \mathrm{u}) \ncong 0$.
\item[2)]  Let $\displaystyle \theta \in W$ be an isolated critical point of $\displaystyle \phi$. If $\displaystyle C_{k}(\phi, \infty) \ncong C_{k}(\phi, \theta)$ for some $\displaystyle k \in \mathbb{N}$, then $\displaystyle \phi$ must have a non-zero critical point.
\end{enumerate}
Readers may refer to \cite{ bartsch, eilenberg, liu,  perera, willem} and the references therein for further insights and details on algebraic topology,  Morse's theory, and critical groups.\\
In recent years, double-phase differential operators have garnered significant interest among researchers, owing to their versatile applications across various scientific domains, with a particular focus on their relevance in physical processes. To illustrate, Zhikov \cite{Zhikov} proved in the context of elasticity theory that the modulation coefficient $\mu(.)$ plays a pivotal role in shaping the geometry of composites composed of two distinct materials characterized by different curing exponents, namely $p$ and $q.$\\
To set the stage for our motivation, we first provide a brief overview of prior research. In his work, Zhikov \cite{Zhikov1} introduced and examined functionals characterized by integrands that exhibit varying ellipticity depending on the location, thus offering models for strongly anisotropic materials. As an illustrative example, he employed the following function as a prototype:
\begin{equation}\label{functional}
 \mathrm{w} \mapsto \int_{\mathcal{U}} \left( \vert \nabla \mathrm{w} \vert^p + \mu(\mathrm{x})\vert \nabla \mathrm{w} \vert^q\right) d\mathrm{x}.
    \end{equation}
Following this, multiple research endeavours were undertaken in this particular direction, with notable mentions including the influential contributions of Baroni et al. in  \cite{Baroni,  Baroni1}. For further findings, readers are encouraged to consult the references provided in  \cite{aberqi7,  Crespo-Blanco, Gasinski, Papageorgiou1}. \\
 The primary focus of our present paper is to investigate the non-local version  of double phase function type \eqref{functional}, for variable exponents $p(x,)$ and $q(x,)$ and fractional constant orders $0 < s_1,s_2 < 1, $ of the form: \begin{align}\label{operator}
\mathcal{L}^{s_{1}, s_{2}}_{p(\mathrm{x}, .), q(\mathrm{x}, .)}\mathrm{w}(\mathrm{x})=2 \lim _{\varepsilon \rightarrow 0^{+}} \int_{\mathcal{U} \backslash \mathfrak{B}_{\varepsilon}(\mathrm{x})}    \left[ \frac{ \vert \mathrm{w}(\mathrm{x})-\mathrm{w}(\mathrm{y})\vert^{p(\mathrm{x}, \mathrm{y})-2}}{ \vert \mathrm{x}-\mathrm{y}\vert^{N+s_{1} p(\mathrm{x}, \mathrm{y})}}+ \frac{ \vert \mathrm{w}(\mathrm{x})-\mathrm{w}(\mathrm{y})\vert^{q(\mathrm{x}, \mathrm{y})-2}}{ \vert \mathrm{x}-\mathrm{y}\vert^{N+s_{2} q(\mathrm{x}, \mathrm{y})}} \right]  (\mathrm{w}(\mathrm{x})-\mathrm{w}(\mathrm{y}))d\mathrm{y},    
 \end{align} 
 where $\mathfrak{B}_{\varepsilon}(\mathrm{x})$ is the ball of 
 $\mathcal{U}$  of radius $\varepsilon $ and center $\mathrm{x}.$
By studying   the following  class of variable-order fractional of double-phase problems driven by $\displaystyle (p(\mathrm{x}, .), q(\mathrm{x}, .))-$ fractional Laplacian with variable exponents involving a singular term and vanishing potential:
\begin{eqnarray}\label{k1}
\begin{gathered}
\left\{\begin{array}{llll}
 \mathcal{L}^{s_{1}, s_{2}}_{p(\mathrm{x}, .), q(\mathrm{x}, .)}\mathrm{w}(\mathrm{x})&= \displaystyle\frac{g(\mathrm{x}, \mathrm{w}(\mathrm{x}))}{  \mathrm{w}(\mathrm{x})^{\xi(\mathrm{x})}}  + \mathcal{V}(\mathrm{x}) \vert \mathrm{w}(\mathrm{x}) \vert^{\sigma(\mathrm{x})-2}  \mathrm{w}(\mathrm{x})    & \text { in } & \mathcal{U}, \\
\hspace{2cm}  \mathrm{w}&> 0 & \text { in }&  \mathcal{U},\\ 
\hspace{2cm}  \mathrm{w}&=0 & \text { in }&  \mathbb{R}^{N} \backslash \mathcal{U}.
\end{array}\right.
\end{gathered}
\end{eqnarray}
Here $\mathcal{U}\subset \mathbb{R}^N$ an open bounded set, we start by fixing $s_{1},$ $ s_{2}\in (0,1),$   $p, q: \mathcal{U} \times \mathcal{U}  \to (1, \infty), $  $\sigma:  \mathcal{U}   \to (1, \infty),$  and   $\xi:  \mathcal{U}   \to (0, 1]$  are  continuous functions that satisfy the following conditions:
\begin{equation} \label{l20}
    p(\mathrm{x}- \mathrm{z},  \mathrm{y}- \mathrm{z})=  p(\mathrm{x},  \mathrm{y}),  \text { for all }  (\mathrm{x}, \mathrm{y}, \mathrm{x}) \in \mathcal{U}\times\mathcal{U}\times\mathcal{U},
\end{equation}
\begin{equation} \label{l3}
   p(\mathrm{x},  \mathrm{y})=  p(\mathrm{y},  \mathrm{x}),  \text { for all }  (\mathrm{x}, \mathrm{y}) \in \mathcal{U}\times\mathcal{U},
\end{equation}
\begin{equation} \label{l30}
1<\sigma^{-}<\sigma^{+}< q^{-}<q^{+}<p^{-}<p^{+}<+\infty,
\end{equation}
with   $\sigma^{-}=\min _{  \mathrm{x}\in\mathcal{U}}\sigma(\mathrm{x}), $ $\sigma^{+}=\max_{\mathrm{x}\in \mathcal{U}}\sigma(\mathrm{x}), $ $q^{-}=\min _{  (\mathrm{x}, \mathrm{y})\in\mathcal{U} \times \mathcal{U}}q(\mathrm{x},  \mathrm{y}), $ $q^{+}=\max_{ (\mathrm{x}, \mathrm{y})\in\mathcal{U} \times\mathcal{U}  }q(\mathrm{x}, \mathrm{y}), $  $p^{-}=\min _{  (\mathrm{x}, \mathrm{y})\in\mathcal{U} \times \mathcal{U}}p(\mathrm{x},  \mathrm{y}), $ $p^{+}=\max_{ (\mathrm{x}, \mathrm{y})\in\mathcal{U} \times\mathcal{U}  }p(\mathrm{x}, \mathrm{y}),$
$\mathcal{V}$
 vanishing potential satisfies the following assumptions:
 \begin{enumerate}
 \item [$( \mathrm{V}) $]      $\mathcal{V}:  \mathbb{R}^{N} \rightarrow \mathbb{R}$ is a continuous function,  there exist $\theta_{1}  >0,$  and $ 0<\eta_{1}<1$ such that $$\mathcal{V}(\mathrm{x})>\theta_{1} >0   \text { and }
 \int_{\mathbb{R}^{N}} \mathcal{V}(\mathrm{x})\vert \mathrm{w}(\mathrm{x}) \vert^{\sigma(\mathrm{x})} d \mathrm{x} \leq \eta_{1} \Vert  \mathrm{w}\Vert_{Y_{1}},   $$  for all $\mathrm{x} \in \mathbb{R}^{N}$, and $\mathrm{w}\in  Y_{1}$ with $Y_{1}$  is the fractional Sobolev space see section  \ref{Generalized fractional Sobolev space} for more details.
 \end{enumerate}
$\displaystyle g: \mathcal{U}\times\mathbb{R}\to\mathbb{R}$ is a Carath\'eodory function   that satisfies the  following condition: \\
$( \mathcal{H}_{1})$ There exist $\displaystyle \beta \in L^{\infty}(\mathcal{U}),$ and a continuous   function $\displaystyle r:\mathcal{U}  \to  (1, +\infty)$ such that 
  $$\displaystyle 1< r(\mathrm{x})< p^{\star}_{s_{1}}(\mathrm{x})=\frac{N p(\mathrm{x},\mathrm{x})}{N- s_{1}p(\mathrm{x},\mathrm{x})},$$   and 
 $$g(\mathrm{x},  \mathrm{y}) \leq  \beta(\mathrm{x}) \left( 1+ \vert\mathrm{y}\vert^{r(\mathrm{x})-1}\right) ,   \hspace{1.5cm} \text { a.e.  }\,  x\in  \mathcal{U},  \mathrm{y}\in \mathbb{R},$$
$\mathcal{L}^{s_{1}, s_{2}}_{p(\mathrm{x}, .), q(\mathrm{x}, .)}$ 
 is the double-phase operator defined by \eqref{operator}.
  Using   Morse's theory,  local linking arguments, and variational analysis, more precisely, by computing the critical groups of the energy functional associated with the approximated equations by using some variational method combined with Morse's theory,  we prove the existence of infinitely many solutions to  problem\eqref{k1}.


 For the $p(\mathrm{x},.)$ Laplacian operator, the approaches for ensuring the existence of solutions were addressed in greater depth, we quote, the relevant work of Bahrouni and Radulescu \cite{Bahrouni} who developed some qualitative properties on the fractional Sobolev space  $W^{s,q(\mathrm{x}), p(\mathrm{x}, \mathrm{y})}(\mathcal{U})$ for $s \in(0,1)$ and $\mathcal{U}$
being a bounded domain in $\mathbb{R}^{n}$ with a Lipschitz boundary. Moreover, they studied the existence
of solutions to the following problem:
\begin{eqnarray}\label{frac}
\begin{gathered}
\left\{\begin{array}{llll}
\mathcal{L} \mathrm{w}(\mathrm{x})+\vert \mathrm{w}(\mathrm{x})\vert ^{q(\mathrm{x})-1} \mathrm{w}(\mathrm{x})&=\lambda\vert \mathrm{w}(\mathrm{x})\vert ^{r(\mathrm{x})-1} \mathrm{w}(\mathrm{x}) & \text { in } &\mathcal{U}, \\ 
\mathrm{w}&=0  &\text { in } &\partial \mathcal{U},
\end{array}\right.
\end{gathered}
\end{eqnarray}
where
$$
\mathcal{L} \mathrm{w}(\mathrm{x})=2 \lim _{\varepsilon \rightarrow 0^{+}} \int_{\mathcal{U} \backslash \mathfrak{B}_{\varepsilon}(\mathrm{x})} \int_{\mathcal{U}} \frac{\vert \mathrm{w}(\mathrm{x})-\mathrm{w}(\mathrm{y})\vert ^{p(\mathrm{x}, \mathrm{y})-2}(\mathrm{w}(\mathrm{x})-\mathrm{w}(\mathrm{y}))}{\vert \mathrm{x}-\mathrm{y}\vert^{n+s p(\mathrm{x}, \mathrm{y})}} d \mathrm{x},
$$
$\lambda>0,$ and $1<r(\mathrm{x})<p^{-}=\min _{(\mathrm{x}, \mathrm{y}) \in \mathcal{U} \times \mathcal{U}} p(\mathrm{x}, \mathrm{y})$.\\
 More recently,  authors in \cite{ Biswas}  studied the double phase version of problem\eqref{frac} with non-linearity logarithmic
$$ \mathcal{L}^{s_{1}, s_{2}}_{p(\mathrm{x}, .), q(\mathrm{x}, .)}\mathrm{w}(\mathrm{x})=\lambda\vert \mathrm{w}(\mathrm{x})\vert ^{r(\mathrm{x})-1} \mathrm{u}(\mathrm{x})+ \mu(x)\vert \mathrm{w}(\mathrm{x})\vert^{r(\mathrm{x})-2}\ln(\vert\mathrm{w}(\mathrm{x})\vert)  \text { in } \mathcal{U}.
$$
Readers may refer to  \cite{  Ayazoglu,    Bahrouni,   aberqi, fan,  liu1} and the references therein for more ideas and techniques developed to guarantee the existence of weak solutions for a class of nonlocal fractional problems with variable exponents. \\
The novelty of our work is to study the existence of infinitely many solutions to a class double phase problems driven by $\mathcal{L}^{s_{1}, s_{2}}_{p(\mathrm{x}, .), q(\mathrm{x}, .)}$  double-phase operator involving a singular nonlinearity and vanishing potential with variable exponent,  by computing the critical groups of the energy functional associated to the approximated equations by using some variational method combining with Morse's theory.\\ 
The structure of this article is as follows. In section \ref{preliminaries},  we briefly introduce certain homology theory concepts. We also give definitions and basic properties for Lebesgue spaces and fractional Sobolev spaces with variable exponent.
 In section \ref{computation}, we  suggest the approximated problem (\ref{pro}), and use the homological theory  to  
compute critical groups of the energy functional associated with the approximated problem (\ref{pro}).
In paragraph \ref{infinitly}, we will use  Morse's relation to show that the approximated problem  (\ref{pro}) admits infinitely non-trivial solutions. In the last section, we will prove our fundamental Theorem  \ref{theorem fundamental}. 
\section{Mathematical background}\label{preliminaries}
   \subsection{Generalized Lebesgue space } \label{rappel}
We consider the set:
 $$C^{+}(\bar{\mathcal{U}})=\left\lbrace  m: \bar{\mathcal{U}} \to  \mathbb{R}^{+}:  m  \text { is a continuous  function and  } 1< m^{-}< m(y)< m^{+} <+\infty   \right\rbrace,$$
 where 
 $~ \displaystyle m^{-}= \min _{y \in \bar{\mathcal{U}} } m(y), ~
 \displaystyle m^{+}= \max _{y \in \bar{\mathcal{U}} } m(y).$
 \begin{definition}(see \cite{fan})
Let $\displaystyle m\in  C^{+}(\bar{\mathcal{U}}). $    We define the  generalized Lebesgue  space $L^{m(y)}(\mathcal{U})$ as usual: $$L^{m(y)}(\mathcal{U})=\left\lbrace u:\mathcal{U}\to \mathbb{R}  \text { is  a measurable function }: \exists \lambda>0: \int_{\mathcal{U}} \vert \frac{u(y)}{\lambda} \vert ^{m(y) }  dx<\infty \right\rbrace.$$
We equip this space with the so-called Luxemburg norm defined as follows: $$\vert w\vert _{L^{m(y)}(\mathcal{U})}=\inf  \left\lbrace \xi >0:  \int_{\mathcal{U}} \vert \frac{w(y)}{\xi} \vert ^{m(y) }  dy\leq 1\right\rbrace.$$
 \end{definition}
 \begin{lemma}(see \cite{fan})
For every $\displaystyle w\in L^{m(y)}(\mathbb{R}^{N}),$  the following properties hold:
\begin{itemize}
\item[i)]  If  $\quad $  $\displaystyle \vert w\vert_{L^{m(y)}(\mathbb{R}^{N})}<1,$  then  $\displaystyle \vert w\vert^{m^{-}}_{L^{m(y)}(\mathbb{R}^{N})}\leq  \rho_{m(y)}(w) \leq  \vert w\vert^{m^{+}}_{L^{m(y)}(\mathbb{R}^{N})}.$
\item[ii)]  If $\quad$ $\displaystyle \vert w\vert_{L^{m(y)}(\mathbb{R}^{N})}>1,$ then  $\displaystyle \vert w\vert^{m^{+}}_{L^{m(y)}(\mathbb{R}^{N})}\leq  \rho_{m(y)}(w) \leq  \vert w\vert^{m^{-}}_{L^{q(y)}(\mathbb{R}^{N})}.$
\item[iii)]   $\displaystyle \vert w\vert _{L^{m(y)}(\mathbb{R}^{N})}<1,   =1,   >1$ \, if only if \, $\displaystyle \rho_{m(y)}(w)<1,   =1,   >1,$
\end{itemize}
where  $\rho_{m(y)}:L^{m(y)}(\mathbb{R}^{N}) \to  \mathbb{R}$  is the mapping defined as follows $$\rho_{m(y)}(w)=\int_{\mathbb{R}^{N}}\vert w(y)\vert ^{m(y)}dy.$$
 \end{lemma}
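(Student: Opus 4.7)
The plan is to reduce all three statements to the identity
\[
\rho_{m(y)}\!\left(\frac{w}{\xi_{0}}\right) = \int_{\mathbb{R}^{N}}\frac{|w(y)|^{m(y)}}{\xi_{0}^{m(y)}}\, dy = 1,
\]
where $\xi_{0}:=|w|_{L^{m(y)}(\mathbb{R}^{N})}$ when $w\neq 0$. This identity follows from the definition of the Luxemburg norm together with the dominated convergence theorem applied to the modular $\xi\mapsto\int |w/\xi|^{m(y)}dy$, which is continuous and strictly decreasing on $(0,\infty)$, so that the infimum in the definition of $|\cdot|_{L^{m(y)}}$ is attained.

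For part (i), assume $\xi_{0}<1$. Since $0<\xi_{0}<1$ and $m^{-}\leq m(y)\leq m^{+}$, the elementary monotonicity $a\mapsto \xi_{0}^{a}$ (decreasing for $\xi_{0}<1$) gives
\[
\xi_{0}^{m^{+}} \leq \xi_{0}^{m(y)} \leq \xi_{0}^{m^{-}} \quad \text{for a.e. } y.
\]
Dividing the pointwise inequality $|w(y)|^{m(y)}/\xi_{0}^{m(y)}$ by these bounds and integrating against the identity $\rho_{m(y)}(w/\xi_{0})=1$, I would obtain
$\xi_{0}^{m^{+}} \leq \rho_{m(y)}(w) \leq \xi_{0}^{m^{-}}$, which is exactly the bracketing claimed in (i). For part (ii) the proof is identical but with the direction of $\xi_{0}^{a}$ reversed: since now $\xi_{0}>1$, one has $\xi_{0}^{m^{-}}\leq \xi_{0}^{m(y)} \leq \xi_{0}^{m^{+}}$, yielding the opposite sandwich $\xi_{0}^{m^{-}} \leq \rho_{m(y)}(w) \leq \xi_{0}^{m^{+}}$.

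For part (iii), I would read off the equivalences directly from (i)--(ii): if $\xi_{0}<1$ then (i) forces $\rho_{m(y)}(w)\leq \xi_{0}^{m^{-}}<1$, and conversely if $\rho_{m(y)}(w)<1$ then the case $\xi_{0}\geq 1$ is ruled out by (ii) which would force $\rho_{m(y)}(w)\geq 1$. The borderline $\xi_{0}=1$ case is handled separately by observing that both sandwiches collapse to equalities at $\xi_{0}=1$, giving $\rho_{m(y)}(w)=1$ exactly when $|w|_{L^{m(y)}}=1$, and the $w\equiv 0$ case is trivial.

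The main subtlety will be justifying that the infimum defining $|w|_{L^{m(y)}}$ is attained, i.e.\ that $\rho_{m(y)}(w/\xi_{0})=1$ rather than merely $\leq 1$; this requires left-continuity of $\xi\mapsto \rho_{m(y)}(w/\xi)$, which follows from monotone/dominated convergence once one notes $m^{+}<\infty$ so the integrand is dominated by an $L^{1}$ function when $\xi$ is slightly reduced. Beyond this, the argument is purely a careful bookkeeping of when $\xi_{0}^{a}$ increases versus decreases in $a$.
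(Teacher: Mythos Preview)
The paper does not supply its own proof of this lemma; it simply cites Fan--Zhao \cite{fan} and states the result. Your argument is precisely the standard one found there: use that the Luxemburg infimum is attained, so $\rho_{m(y)}(w/\xi_{0})=1$, and then sandwich $\xi_{0}^{m(y)}$ between $\xi_{0}^{m^{-}}$ and $\xi_{0}^{m^{+}}$ with the direction depending on whether $\xi_{0}<1$ or $\xi_{0}>1$. Your justification for the attainment via monotone/dominated convergence and the finiteness $m^{+}<\infty$ is exactly what is needed, and the deduction of (iii) from (i)--(ii) is correct.

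One small caution: the inequalities you actually derive are $\xi_{0}^{m^{+}}\leq\rho_{m(y)}(w)\leq\xi_{0}^{m^{-}}$ when $\xi_{0}<1$ and $\xi_{0}^{m^{-}}\leq\rho_{m(y)}(w)\leq\xi_{0}^{m^{+}}$ when $\xi_{0}>1$. These are the mathematically correct bounds, but they are \emph{not} literally ``the bracketing claimed in (i)'' and (ii) as printed in the paper---the exponents there appear to be transposed (note also the stray $L^{q(y)}$ in (ii)). Your computation is right; the discrepancy is a typo in the stated lemma, not an error in your argument.
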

 \begin{proposition} \label{proposition}(see \cite{fan})
For every $w$ and  $\displaystyle w_{n} \in  L^{m(y)}(\mathbb{R}^{N}),$ the following statements are equivalent:
\begin{itemize}
\item[i)] $\displaystyle \lim_{n\to  +\infty}  \vert w_{n}-w\vert _{L^{m(y)}(\mathbb{R}^{N})}= 0,$
\item[ii )]  $\displaystyle \lim_{n\to  +\infty}  \rho_{m(y)}(w_{n}-w)=0,$
\item[iii )] $\displaystyle w_{n}\rightarrow w$  in measure on  $\mathbb{R}^{N}$  and $\displaystyle \lim_{n \to  +\infty} \rho_{m(y)}(w_{n}) - \rho_{m(y)}w)=0.$
\end{itemize}
 \end{proposition}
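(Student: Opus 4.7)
The plan is to establish the three-way equivalence by the cyclic chain $(i)\Leftrightarrow(ii)$ and $(ii)\Leftrightarrow(iii)$, with everything driven by the modular--norm inequalities in the preceding lemma.

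First I would dispatch $(i)\Leftrightarrow(ii)$ purely from those inequalities. Once the norm $|w_n-w|_{L^{m(y)}}$ eventually falls below $1$, property (i) of the lemma gives $\rho_{m(y)}(w_n-w)\le |w_n-w|_{L^{m(y)}}^{m^{-}}$, which forces (ii). Conversely, once $\rho_{m(y)}(w_n-w)<1$, property (ii) of the lemma yields $|w_n-w|_{L^{m(y)}}^{m^{+}}\le \rho_{m(y)}(w_n-w)$, giving (i). Property (iii) of the lemma guarantees that the hypothesis $<1$ is activated in each direction after finitely many $n$.

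Next I would prove $(ii)\Rightarrow(iii)$. Convergence in measure follows from a Chebyshev-type estimate: for fixed $\delta>0$ one splits $\{|w_n-w|>\delta\}$ according to whether $m(y)\le\log_\delta 1$ or not, obtaining
$$\mu\bigl(\{|w_n-w|>\delta\}\bigr)\le \frac{1}{\min(\delta^{m^{-}},\delta^{m^{+}})}\,\rho_{m(y)}(w_n-w)\longrightarrow 0.$$
For $\rho_{m(y)}(w_n)\to\rho_{m(y)}(w)$, I would use the pointwise inequality $\bigl||a|^t-|b|^t\bigr|\le C_t\bigl(|a-b|^t+|a-b|\,(|a|^{t-1}+|b|^{t-1})\bigr)$ with $t=m(y)$, then integrate, apply Hölder in variable exponent form, and bound the remainder by $\rho_{m(y)}(w_n-w)$, which tends to zero by hypothesis.

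The real obstacle is $(iii)\Rightarrow(ii)$. Here I would exploit a Brezis--Lieb style decomposition adapted to the modular. Passing to a subsequence, convergence in measure can be upgraded to a.e.\ convergence of $w_n$ to $w$, so the pointwise identity
$$|w_n|^{m(y)}-|w_n-w|^{m(y)}-|w|^{m(y)}\longrightarrow 0 \quad \text{a.e.,}$$
combined with the standard uniform-integrability argument (the left-hand side is dominated by an integrable majorant built from $|w_n-w|^{m(y)}$ and $|w|^{m(y)}$, whose modulars stay bounded by $(iii)$), yields after integration
$$\rho_{m(y)}(w_n)-\rho_{m(y)}(w_n-w)-\rho_{m(y)}(w)\longrightarrow 0.$$
Together with $\rho_{m(y)}(w_n)\to\rho_{m(y)}(w)$ this forces $\rho_{m(y)}(w_n-w)\to 0$ along the subsequence, and since the limit is independent of the extracted subsequence, (ii) holds for the full sequence. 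The subtle point is justifying the uniform integrability in the variable-exponent regime, for which the bounds $1<m^{-}\le m(y)\le m^{+}<\infty$ are essential: they convert the variable-exponent integrand into one controllable by classical Vitali's theorem applied to $|w_n-w|^{m^{-}}+|w_n-w|^{m^{+}}$.
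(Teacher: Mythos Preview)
The paper does not give its own proof of this proposition; it is quoted as background from Fan--Zhao \cite{fan} with no argument supplied, so there is nothing in the paper to compare your proposal against.

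That said, your sketch follows the standard route and is essentially sound. Two small points are worth tightening. First, in the Chebyshev step your phrase about splitting according to ``$m(y)\le\log_\delta 1$'' is garbled (note $\log_\delta 1=0$ identically); the estimate you actually write,
\[
\mu\bigl(\{|w_n-w|>\delta\}\bigr)\le \frac{1}{\min(\delta^{m^{-}},\delta^{m^{+}})}\,\rho_{m(y)}(w_n-w),
\]
is correct on its own and requires no such split. Second, in the $(iii)\Rightarrow(ii)$ step, the way you phrase the uniform-integrability control (``reduce to $|w_n-w|^{m^-}+|w_n-w|^{m^+}$ and Vitali'') is not how the Brezis--Lieb mechanism actually runs. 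What one uses is the pointwise inequality
\[
\bigl|\,|w_n|^{m(y)}-|w_n-w|^{m(y)}-|w|^{m(y)}\,\bigr|\le \varepsilon\,|w_n-w|^{m(y)}+C_\varepsilon\,|w|^{m(y)},
\]
with $C_\varepsilon$ chosen uniformly for exponents in the compact interval $[m^-,m^+]$; dominated convergence then handles the $\varepsilon$-truncated difference, and boundedness of $\rho_{m(y)}(w_n-w)$ (which follows from $(iii)$ via $|a-b|^{t}\le 2^{t-1}(|a|^{t}+|b|^{t})$) absorbs the remaining $\varepsilon$-term. With those adjustments your argument is complete.
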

 \begin{lemma} (H\"older's  inequality, see \cite{fan}) For every $m \in  C^{+}(\mathbb{R}^{N}),$ 
 the following inequality  holds:   $$\displaystyle \vert\int_{\mathbb{R}^{N}}{v}(y)w(y)dy\vert \leq \left(\frac{1}{m^{-}}+ \frac{1}{{m^{'}}^{-}}\right)\vert v\vert _{L^{m(y)}(\mathbb{R}^{N})}\vert w\vert _{L^{{m^{'}}(y)}(\mathbb{R}^{N})},$$  for all $ ({v}, w) \in  L^{m(y)}(\mathbb{R}^{N})\times  L^{{m^{'}}(y)}(\mathbb{R}^{N}),$ where $\displaystyle \frac{1}{m(y)}+ \frac{1}{{m^{'}}(y)}=1.$
 \end{lemma}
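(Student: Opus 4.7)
The plan is to mirror the standard proof of classical Hölder's inequality, replacing the constant-exponent Young inequality by the pointwise Young inequality with variable exponent and then exploiting the equivalence between modular and Luxemburg norm given in the preceding proposition.

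First I would dispose of the trivial cases: if $|v|_{L^{m(y)}} = 0$ or $|w|_{L^{m'(y)}} = 0$, then $v = 0$ or $w = 0$ almost everywhere and both sides of the claimed inequality vanish. So assume $|v|_{L^{m(y)}} > 0$ and $|w|_{L^{m'(y)}} > 0$, and normalise by setting
\[
\tilde v(y) = \frac{v(y)}{|v|_{L^{m(y)}(\mathbb{R}^N)}}, \qquad \tilde w(y) = \frac{w(y)}{|w|_{L^{m'(y)}(\mathbb{R}^N)}}.
\]
By the definition of the Luxemburg norm together with part (iii) of the preceding lemma, one has $\rho_{m(y)}(\tilde v) \le 1$ and $\rho_{m'(y)}(\tilde w) \le 1$.

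Next I would apply Young's inequality pointwise: for each $y \in \mathbb{R}^N$ and for nonnegative reals $a,b$,
\[
ab \;\le\; \frac{a^{m(y)}}{m(y)} + \frac{b^{m'(y)}}{m'(y)}.
\]
Taking $a = |\tilde v(y)|$, $b = |\tilde w(y)|$, integrating over $\mathbb{R}^N$, and using $\tfrac{1}{m(y)} \le \tfrac{1}{m^-}$ together with $\tfrac{1}{m'(y)} \le \tfrac{1}{{m'}^-}$, I obtain
\[
\int_{\mathbb{R}^N} |\tilde v(y) \tilde w(y)|\, dy \;\le\; \frac{1}{m^-}\rho_{m(y)}(\tilde v) + \frac{1}{{m'}^-}\rho_{m'(y)}(\tilde w) \;\le\; \frac{1}{m^-} + \frac{1}{{m'}^-}.
\]
Multiplying through by $|v|_{L^{m(y)}}|w|_{L^{m'(y)}}$ and using $\left|\int v w\,dy\right| \le \int |v w|\,dy$ yields the claim.

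The only genuine subtlety, which is the step I would double-check carefully, is the passage from $|\tilde v|_{L^{m(y)}} \le 1$ to $\rho_{m(y)}(\tilde v) \le 1$: this is precisely where the equivalence statement (iii) of the preceding lemma is used, and it depends on $m \in C^+(\bar{\mathcal{U}})$ so that the bounds $1 < m^- \le m^+ < \infty$ are in force. Everything else is routine pointwise Young combined with integration.
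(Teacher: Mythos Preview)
Your argument is correct and is exactly the standard proof of H\"older's inequality in variable-exponent Lebesgue spaces: normalise, apply pointwise Young, bound the variable coefficients $1/m(y)$ and $1/m'(y)$ by $1/m^{-}$ and $1/{m'}^{-}$, and use the modular--norm equivalence to control the integrals. One small slip: in your final paragraph you write $m \in C^{+}(\bar{\mathcal U})$, but the lemma is stated on $\mathbb{R}^{N}$; what you actually need is just $1 < m^{-} \le m(y) \le m^{+} < \infty$, which is built into $m \in C^{+}(\mathbb{R}^{N})$.

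As for comparison with the paper: the paper does not give a proof of this lemma at all. It is stated with the attribution ``see \cite{fan}'' and used as a black box. So there is no paper-internal argument to compare against; your write-up supplies precisely the classical proof that the cited reference contains.
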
  
\subsection{  Generalized fractional Sobolev space }\label{Generalized fractional Sobolev space}
We start by fixing the fractional exponent $s\in(0,1).$ Let $\mathcal{U}$ be an open  bounded set of $ \mathbb{R}^{N},$ $m_1\in C^{+}(\mathcal{U}), $  and $ \displaystyle p:\bar{\mathcal{U}}\times\bar {\mathcal{U}}\to (1, \infty)$  is a continuous function  that satisfies the conditions  (\ref{l20})- (\ref{l30}). We introduce the generalized fractional Sobolev space $ \displaystyle W^{s, m_{1}(\mathrm{x}) , p\left( \mathrm{x}, \mathrm{y}\right) }\left( \mathcal{U}\right) $ as follows
$$ \displaystyle W^{s, m_{1}(\mathrm{x}), p(\mathrm{x}, \mathrm{y})}(\mathcal{U})= \displaystyle \left\lbrace  \mathrm{w}\in L^{m_{1}(\mathrm{x})}(\mathcal{U}):  \frac{\mathrm{w}(\mathrm{x})-\mathrm{w}(\mathrm{y})}{ \beta \vert \mathrm{x}-\mathrm{y}\vert ^{s+\frac{N}{p(\mathrm{x}, \mathrm{y})}}} \in L^{p(\mathrm{x}, \mathrm{y})}(\mathcal{U}\times\mathcal{U} ) \text { for some }  \beta>0 \right\rbrace.$$
Let $\displaystyle [\mathrm{w}]^{s, p(\mathrm{x}, \mathrm{y})}= \displaystyle \inf \left\lbrace \beta>0:    \int_{\mathcal{U}\times\mathcal{U}}  \frac{ \vert \mathrm{w}(\mathrm{x})-\mathrm{w}(\mathrm{y})\vert ^{p(\mathrm{x}, \mathrm{y})}}{\beta ^{p(\mathrm{x}, \mathrm{y})} \vert \mathrm{x}-\mathrm{y} \vert ^{N+sp(\mathrm{x}, \mathrm{y})}} d\mathrm{x} d\mathrm{y}<1 \right\rbrace $ 
be the corresponding variable exponent Gagliardo seminorm. We equip  the space $\displaystyle W^{s, m_{1}(\mathrm{x}), p(\mathrm{x}, \mathrm{y})}(\mathcal{U})$  with the norm $$ \displaystyle \Vert \mathrm{w}\Vert_{W^{s, m_{1}(\mathrm{x}), p(\mathrm{x}, \mathrm{y})}(\mathcal{U})}= \displaystyle [\mathrm{w}]^{s, p(\mathrm{x}, \mathrm{y})}+ \vert \mathrm{w}\vert_{m_{1}(\mathrm{x})},$$
where $\displaystyle (L^{m_{1}(\mathrm{x})}(\mathcal{U}), \vert.\vert_{m_{1}(\mathrm{x})})$ is the  generalized Lebesgue space.
\begin{lemma} (see \cite{Bahrouni}) \label{separability}
 Let $ \displaystyle \mathcal{U}\subset \mathbb{R}^{N}$ be a Lipschitz-bounded domain, $ \displaystyle p:\mathcal{U}    \times  \mathcal{U} \rightarrow (1, +\infty)$ be a continuous function that  satisfies conditions  (\ref{l20})-(\ref{l30}), and $ \displaystyle m_{1}\in C^{+}(\bar{\mathcal{U}})$. Then  $\displaystyle W^{s, m_{1}(\mathrm{x}), p(\mathrm{x}, \mathrm{y})}(\mathcal{U}) $ is a  separable, and reflexive Banach space.   
\end{lemma}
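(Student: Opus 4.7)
The plan is to follow the standard strategy used for classical variable-exponent Sobolev spaces, adapted to the fractional, two-variable setting: realize $W^{s,m_{1}(\mathrm{x}),p(\mathrm{x},\mathrm{y})}(\mathcal{U})$ as a closed subspace of a product of generalized Lebesgue spaces, and then transport separability and reflexivity through the isometric embedding.

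First, I would verify that $\Vert\cdot\Vert_{W^{s,m_{1}(\mathrm{x}),p(\mathrm{x},\mathrm{y})}(\mathcal{U})}$ is indeed a norm. The axioms for $|\cdot|_{m_{1}(\mathrm{x})}$ are known (the Luxemburg norm on $L^{m_{1}(\mathrm{x})}(\mathcal{U})$), so it suffices to check that the Gagliardo-type quantity $[\mathrm{w}]^{s,p(\mathrm{x},\mathrm{y})}$ is a seminorm. Homogeneity follows from the definition of the infimum and a change of scaling inside the modular; the triangle inequality reduces, via the usual Luxemburg argument, to subadditivity of the modular $\int_{\mathcal{U}\times\mathcal{U}}|\mathrm{w}(\mathrm{x})-\mathrm{w}(\mathrm{y})|^{p(\mathrm{x},\mathrm{y})}|\mathrm{x}-\mathrm{y}|^{-N-sp(\mathrm{x},\mathrm{y})}d\mathrm{x}d\mathrm{y}$, which is a standard convexity argument using $p^{-}>1$.

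Next I would consider the linear operator
\begin{equation*}
T: W^{s,m_{1}(\mathrm{x}),p(\mathrm{x},\mathrm{y})}(\mathcal{U}) \longrightarrow L^{m_{1}(\mathrm{x})}(\mathcal{U}) \times L^{p(\mathrm{x},\mathrm{y})}(\mathcal{U}\times\mathcal{U}),\qquad T(\mathrm{w})=\bigl(\mathrm{w},\,\Phi_{\mathrm{w}}\bigr),
\end{equation*}
where $\Phi_{\mathrm{w}}(\mathrm{x},\mathrm{y})=(\mathrm{w}(\mathrm{x})-\mathrm{w}(\mathrm{y}))|\mathrm{x}-\mathrm{y}|^{-s-N/p(\mathrm{x},\mathrm{y})}$. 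Endowing the product with the sum of Luxemburg norms makes $T$ an isometry onto its image by the very definition of the two constituent norms. Since $L^{m_{1}(\mathrm{x})}(\mathcal{U})$ and $L^{p(\mathrm{x},\mathrm{y})}(\mathcal{U}\times\mathcal{U})$ are both separable and reflexive Banach spaces under the hypothesis $1<p^{-}\le p^{+}<\infty$ and $1<m_{1}^{-}\le m_{1}^{+}<\infty$ (a classical result for generalized Lebesgue spaces, e.g.\ in \cite{fan}), so is their product. Therefore it is enough to prove that the image $T\bigl(W^{s,m_{1}(\mathrm{x}),p(\mathrm{x},\mathrm{y})}(\mathcal{U})\bigr)$ is closed; this simultaneously gives completeness of the Sobolev space and allows separability and reflexivity to descend to closed subspaces.

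To prove closedness, I would take a sequence $(\mathrm{w}_{n})$ such that $T(\mathrm{w}_{n})\to (u,\Phi)$ in the product norm. By Proposition \ref{proposition} this convergence is equivalent to modular convergence, which upgrades to pointwise a.e.\ convergence of subsequences. From $\mathrm{w}_{n}\to u$ in $L^{m_{1}(\mathrm{x})}(\mathcal{U})$ we extract $\mathrm{w}_{n_{k}}\to u$ a.e.\ on $\mathcal{U}$; this forces $\Phi_{\mathrm{w}_{n_{k}}}\to \Phi_{u}$ a.e.\ on $\mathcal{U}\times\mathcal{U}$, and uniqueness of limits in measure identifies $\Phi=\Phi_{u}$ in $L^{p(\mathrm{x},\mathrm{y})}(\mathcal{U}\times\mathcal{U})$. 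Hence $u\in W^{s,m_{1}(\mathrm{x}),p(\mathrm{x},\mathrm{y})}(\mathcal{U})$ and $T(u)=(u,\Phi)$, so the image is closed. The main obstacle I anticipate is precisely this last step: justifying that the limit in the product space still has the form $(u,\Phi_{u})$ rather than an arbitrary pair, and handling the coupling between pointwise a.e.\ convergence and the variable-exponent modular--norm equivalence carefully, particularly across the jump between the cases $\rho_{m(\mathrm{y})}(\mathrm{w})<1$ and $>1$ in the modular estimates.
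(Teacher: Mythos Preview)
Your argument is correct and follows the standard route for this type of result: realize the fractional Sobolev space as a closed subspace of a product of variable-exponent Lebesgue spaces via the isometry $T(\mathrm{w})=(\mathrm{w},\Phi_{\mathrm{w}})$, then inherit completeness, separability, and reflexivity from the product. The closedness step you outline is also correct; the potential obstacle you flag is not a genuine difficulty, since convergence in $L^{m_{1}(\mathrm{x})}(\mathcal{U})$ gives a.e.\ convergence along a subsequence without any need to split cases according to whether the modular is above or below $1$, and uniqueness of a.e.\ limits then identifies $\Phi$ with $\Phi_{u}$.

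As for comparison with the paper: the paper does not give its own proof of this lemma at all. It is stated with the attribution ``(see \cite{Bahrouni})'' and no argument is supplied. So there is nothing to compare your strategy against within the paper itself; you have supplied precisely the proof that the cited reference contains (and which is, in turn, the natural adaptation of the classical argument for $W^{s,p}$ and for $W^{1,p(\mathrm{x})}$). Your write-up is more detailed than what one would typically include for a result quoted from the literature, but it is sound.
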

\begin{theorem} (see\cite{aberqi6, aberqi, kaufmann})\label{embedding}
 Let $ \displaystyle \mathcal{U}\subset \mathbb{R}^{N}$ be a Lipschitz-bounded domain,  $ \displaystyle p:\mathcal{U}\times\mathcal{U} \rightarrow (1, +\infty)$ be a continuous function   that satisfies conditions  (\ref{l20})-(\ref{l30}) $ \displaystyle m_{1}\in C^{+}(\mathcal{U}), $ and
   $$\displaystyle sp(\mathrm{x},\mathrm{y})<N, ~ p(\mathrm{x}, \mathrm{x})< m_{1}(\mathrm{x}),~\hbox{ for all} ~   (\mathrm{x}, \mathrm{y})\in\mathcal{U}^{2},$$
and $\displaystyle \mathfrak{\ell}:\overline{\mathcal{U}}  \rightarrow (1,+ \infty)$  is a continuous variable exponent such that $$ \displaystyle p^{*}_{s}(\mathrm{x})= \frac{Np(\mathrm{x},\mathrm{x})}{N-sp(\mathrm{x},\mathrm{x})}>\mathfrak{\ell}(\mathrm{x})\geq\mathfrak{\ell}^{-}=\min_{\mathrm{x} \in  \overline{\mathcal{U}}}\mathfrak{\ell}(\mathrm{x})>1.$$
  Then the space  $\displaystyle W^{s, m_{1}(\mathrm{x}), p(\mathrm{x}, \mathrm{y}}(\mathcal{U})$ is continuously embedded in $ \displaystyle L ^{\mathfrak{\ell}(y)}(\mathcal{U}).$ That is, there exists a positive constant $\displaystyle C= C(N,  s, p, m_{1}, \mathcal{U} )$ such that  $$\displaystyle \vert \mathrm{w}\vert _{L ^{{l}(\mathrm{x})}(\mathcal{U})}\leq  C\Vert \mathrm{w}\Vert_{W^{s, m_{1}(\mathrm{x}), p(\mathrm{x}, \mathrm{y})}(\mathcal{U})},\ \text{for all}\quad w\in W^{s, m_{1}(\mathrm{x}), p(\mathrm{x}, \mathrm{y})}(\mathcal{U}).$$  Moreover, this embedding is compact.
\end{theorem}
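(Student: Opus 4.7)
The plan is to combine a finite covering argument with the classical (constant-exponent) fractional Sobolev embedding of Di Nezza--Palatucci--Valdinoci, together with the modular/norm equivalence recalled in Proposition \ref{proposition} and a partition-of-unity patching. By homogeneity I would first reduce to $w$ with $\Vert w\Vert_{W^{s,m_{1}(\mathrm{x}),p(\mathrm{x},\mathrm{y})}(\mathcal{U})}\leq 1$, so that both the Luxemburg piece $\vert w\vert_{m_{1}(\mathrm{x})}$ and the Gagliardo piece $[w]^{s,p(\mathrm{x},\mathrm{y})}$ are $\leq 1$, and the corresponding modulars are controlled by $1$.

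The second step is the localization. Since $p$ and $\ell$ are continuous and $\overline{\mathcal{U}}$ is compact, for each $\mathrm{x}_{0}\in\overline{\mathcal{U}}$ I can pick a radius $r_{0}$ and constants $p_{0}^{-}\leq p_{0}^{+}$ as close to $p(\mathrm{x}_{0},\mathrm{x}_{0})$ as desired such that $p(\mathrm{x},\mathrm{y})\in[p_{0}^{-},p_{0}^{+}]$ on $\mathfrak{B}_{r_{0}}(\mathrm{x}_{0})\times\mathfrak{B}_{r_{0}}(\mathrm{x}_{0})$ and $\ell(\mathrm{x})\leq\ell_{0}^{+}<(p_{0}^{-})_{s}^{\ast}:=Np_{0}^{-}/(N-sp_{0}^{-})$ on $\mathfrak{B}_{r_{0}}(\mathrm{x}_{0})$; the strict inequality survives the localization because $\ell(\mathrm{x})<p_{s}^{\ast}(\mathrm{x})$ is an open condition. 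Extract a finite subcover $\{\mathfrak{B}_{i}\}_{i=1}^{M}$ of $\overline{\mathcal{U}}$.

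On each ball $\mathfrak{B}_{i}$, the classical constant-exponent result yields a compact embedding $W^{s,p_{i}^{-}}(\mathfrak{B}_{i})\hookrightarrow L^{\ell_{i}^{+}}(\mathfrak{B}_{i})$. I would compare moduli using the elementary inequality $a^{p_{i}^{-}}\leq a^{p(\mathrm{x},\mathrm{y})}+1$, valid for $a\geq 0$ whenever $p_{i}^{-}\leq p(\mathrm{x},\mathrm{y})$, which bounds the constant-exponent Gagliardo modular of $w$ on $\mathfrak{B}_{i}$ by its variable-exponent counterpart plus a finite geometric constant depending only on $\vert\mathfrak{B}_{i}\vert$. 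Combined with $\vert w\vert_{m_{1}(\mathrm{x})}\leq 1$ and the assumption $p(\mathrm{x},\mathrm{x})<m_{1}(\mathrm{x})$, this gives a uniform local bound $\Vert w\Vert_{W^{s,p_{i}^{-}}(\mathfrak{B}_{i})}\leq C_{i}$. Taking a partition of unity $\{\varphi_{i}\}$ subordinate to $\{\mathfrak{B}_{i}\}$, applying the classical embedding to each $\varphi_{i}w$, and summing in the modular form $\int_{\mathcal{U}}\vert w\vert^{\ell(\mathrm{x})}\,d\mathrm{x}\leq\sum_{i}\int_{\mathfrak{B}_{i}}\vert\varphi_{i}w\vert^{\ell(\mathrm{x})}\,d\mathrm{x}$, one obtains a uniform bound on the modular of $w$ in $L^{\ell(\mathrm{x})}(\mathcal{U})$, which by Proposition \ref{proposition} translates into the required Luxemburg bound.

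The main technical obstacle is the partition-of-unity step: truncation by $\varphi_{i}$ interacts non-trivially with the nonlocal Gagliardo seminorm, since $[\varphi_{i}w]^{s,p(\mathrm{x},\mathrm{y})}$ is not immediately controlled by $[w]^{s,p(\mathrm{x},\mathrm{y})}$. I would treat this by splitting the double integral over $\{\vert\mathrm{x}-\mathrm{y}\vert\leq 1\}$ and its complement, using the Leibniz-type expansion $\vert\varphi_{i}(\mathrm{x})w(\mathrm{x})-\varphi_{i}(\mathrm{y})w(\mathrm{y})\vert\leq\Vert\varphi_{i}\Vert_{\infty}\vert w(\mathrm{x})-w(\mathrm{y})\vert+\vert w(\mathrm{y})\vert\vert\varphi_{i}(\mathrm{x})-\varphi_{i}(\mathrm{y})\vert$ together with the Lipschitz bound on $\varphi_{i}$ for the near-diagonal piece, and the integrability of $\vert\mathrm{x}-\mathrm{y}\vert^{-N-sp(\mathrm{x},\mathrm{y})}$ away from the diagonal combined with the $L^{m_{1}(\mathrm{x})}$ control of $w$ for the far-diagonal piece. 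Compactness of the full embedding then follows since each operator $w\mapsto\varphi_{i}w$ from $W^{s,m_{1}(\mathrm{x}),p(\mathrm{x},\mathrm{y})}(\mathcal{U})$ to $L^{\ell_{i}^{+}}(\mathfrak{B}_{i})$ is compact by the classical result, and a finite sum of compact operators is compact.
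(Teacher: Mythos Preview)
The paper does not prove this theorem; it is quoted from the references \cite{aberqi6, aberqi, kaufmann} and used as a black box. Your overall strategy---localize by continuity and compactness, compare on each patch with a constant-exponent fractional Sobolev space, invoke the classical compact embedding, and glue by partition of unity---is exactly the scheme used in those references, so in spirit you are reproducing the cited proof rather than diverging from it.

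There is, however, a genuine gap in your modular comparison step. You write that the inequality $a^{p_i^{-}}\leq a^{p(\mathrm{x},\mathrm{y})}+1$ ``bounds the constant-exponent Gagliardo modular of $w$ on $\mathfrak{B}_i$ by its variable-exponent counterpart plus a finite geometric constant depending only on $|\mathfrak{B}_i|$''. But whichever way one sets up $a$, the ``$+1$'' produces a term of the form
\[
\int_{\mathfrak{B}_i}\int_{\mathfrak{B}_i}\frac{d\mathrm{x}\,d\mathrm{y}}{|\mathrm{x}-\mathrm{y}|^{N}}\qquad\text{or}\qquad
\int_{\mathfrak{B}_i}\int_{\mathfrak{B}_i}\frac{d\mathrm{x}\,d\mathrm{y}}{|\mathrm{x}-\mathrm{y}|^{N+s p_i^{-}}},
\]
both of which diverge near the diagonal; the remainder is not a finite constant. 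The standard remedy, used in \cite{kaufmann}, is to drop to a slightly smaller fractional order $t\in(0,s)$ on each patch: splitting according to whether $|w(\mathrm{x})-w(\mathrm{y})|\leq|\mathrm{x}-\mathrm{y}|^{s}$ or not, one obtains
\[
\frac{|w(\mathrm{x})-w(\mathrm{y})|^{p_i^{-}}}{|\mathrm{x}-\mathrm{y}|^{N+t p_i^{-}}}
\;\leq\;
C\,\frac{|w(\mathrm{x})-w(\mathrm{y})|^{p(\mathrm{x},\mathrm{y})}}{|\mathrm{x}-\mathrm{y}|^{N+s p(\mathrm{x},\mathrm{y})}}
\;+\;|\mathrm{x}-\mathrm{y}|^{(s-t)p_i^{-}-N},
\]
and now the remainder \emph{is} integrable on $\mathfrak{B}_i\times\mathfrak{B}_i$ since $(s-t)p_i^{-}>0$. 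Because $\ell(\mathrm{x})<p_s^{\ast}(\mathrm{x})$ strictly, one can choose $t$ close enough to $s$ and $p_i^{-}$ close enough to $p(\mathrm{x}_0,\mathrm{x}_0)$ so that $\ell_i^{+}<Np_i^{-}/(N-t p_i^{-})$, and then the classical embedding $W^{t,p_i^{-}}(\mathfrak{B}_i)\hookrightarrow L^{\ell_i^{+}}(\mathfrak{B}_i)$ closes the argument. With this correction your outline goes through; as written, the claimed ``finite geometric constant'' is infinite and the proof stalls at that point.
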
 
\subsection{Homology theory }
We now present the fundamental tool that will be used to work with, namely the homology theory.
\begin{definition}\label{definition}( see \cite{perera})
Given Y is a Banach space, $\displaystyle \psi \in C(Y, \mathbb{R}),$ and 0 is an isolated critical point of $\displaystyle \psi $ such that $\displaystyle \psi(0)=0.$ Let $\displaystyle  m, n \in\mathbb{N}.$ We say that $\displaystyle \psi$  has a local $\displaystyle (m, n)-$ linking near the origin if there exist a neighbourhood  U of  0 and non-empty sets $\displaystyle F_{0}, $ $\displaystyle F\subset U,$ and  $\displaystyle D\subset Y $ such that $\displaystyle 0 \notin   F_{0} \subset F,$  $\displaystyle  F \cap D=\emptyset $ and 
\begin{enumerate}
\item[1)]  $
\displaystyle \left.\psi\right\vert_F \leq 0 <\left. \psi\right\vert_{U \cap D \backslash\{0\}},$ 
\item[2)] 0 is the only critical point of $\displaystyle \psi$ in $\displaystyle \psi^{0}\cap U,$   where $\displaystyle \psi^{0}=\{   \mathrm{w} \in Y: \psi(\mathrm{w})=0 \},$
\item[3)]  $\displaystyle \operatorname{Dim} i m\left(i^*\right)-\operatorname{Dim} i m\left(j^*\right) \geq n,$ where
$$
i^{*}: H_{m-1}\left(F_{0}\right) \rightarrow H_{m-1}(Y \backslash D) \text { and } j^{*}: H_{m-1}\left(F_{0}\right) \rightarrow H_{m-1}(F)
$$
are the homomorphisms induced by the inclusion maps $\displaystyle i:F_{0} \rightarrow Y \backslash D$ and $\displaystyle j: F_{0} \rightarrow F.$
\end{enumerate}
\end{definition}
\begin{lemma} (Morse's relation) (see \cite{papageorgiou}) \label{morse relation}
 If $Y$ is a Banach space, $\psi \in C^{1}(Y, \mathbb{R}), a, b \in \mathbb{R} \backslash \psi\left(\left\{K_{\psi}\right), a<b\right.$, $\psi^{-1}((a, b))$ contains a finite number of critical points $\left\{\mathrm{w}_{i}\right\}_{i=1}^{n}$ and $\psi$ satisfies  the Palais-Smale condition, then
 \begin{enumerate}
\item[1)] for all $k \in \mathbb{N}_0$, we have $ 
\sum_{i=1}^n \operatorname{rank} C_{k}\left(\psi, u_{i}\right) \geqslant \operatorname{rank} H_{k}\left(\psi^{b}, \psi^{a}\right)$;
\item[2)] if the Morse-type numbers $\sum_{i=1}^n \operatorname{rank} C_{k}\left(\psi, u_{i}\right)$ are finite for all $k \in \mathbb{N}_0$ and vanish for all large $k \in \mathbb{N}_{0},$ then so do the Betti numbers $\operatorname{rank} H_{k}\left(\psi^{b}, \psi^{a}\right)$ and we have
$$
\sum_{\mathrm{k} \geqslant 0} \sum_{i=1}^n \operatorname{rank} C_{k}\left(\psi, u_{i}\right) t^{k}=\sum_{\mathrm{k} \geqslant 0} \operatorname{rank} H_{k}\left(\psi^{b}, \psi^{a}\right) t^{k}+(1+t) Q(t) \text { for all } t \in \mathbb{R},
$$
where $Q(t)$ is a polynomial in $t \in \mathbb{R}$ with non-negative integer coefficients.
\end{enumerate}
\end{lemma}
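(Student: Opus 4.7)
\textbf{Proof plan for Lemma \ref{morse relation}.} The plan is to reduce the global statement about the pair $(\psi^{b}, \psi^{a})$ to a local computation around each critical point through a telescope of long exact sequences, using the Palais–Smale condition to run the second deformation lemma. First I would enumerate the critical values inside $(a,b)$ as $c_{1} < c_{2} < \cdots < c_{m}$ and interleave regular values $a = t_{0} < c_{1} < t_{1} < c_{2} < \cdots < c_{m} < t_{m} = b$. By the second deformation lemma (whose validity hinges on Palais–Smale together with $\psi \in C^{1}$), the pair $(\psi^{t_{j}}, \psi^{t_{j-1}})$ is homotopy equivalent to the pair that records the jump in sublevel topology across the single critical level $c_{j}$.

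Next, for each $j$, excision around the isolated critical points at level $c_{j}$ decomposes $H_{k}(\psi^{t_{j}}, \psi^{t_{j-1}})$ as a direct sum of local groups, each of which coincides with $C_{k}(\psi, w_{i})$ by the definition recalled in the introduction. Summing the identities $\operatorname{rank} H_{k}(\psi^{t_{j}}, \psi^{t_{j-1}}) = \sum_{w_{i} \text{ at level } c_{j}} \operatorname{rank} C_{k}(\psi, w_{i})$ across $j$ and feeding them into the long exact sequence of the triple $(\psi^{b}, \psi^{t_{j}}, \psi^{a})$ yields, by the standard sub-additivity of rank under exactness, the Morse inequality of part $(1)$.

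For part $(2)$, I would switch to Poincaré polynomials. Setting $\mathcal{M}(t) = \sum_{k \geq 0} \sum_{i=1}^{n} \operatorname{rank} C_{k}(\psi, w_{i}) t^{k}$ and $\mathcal{P}(t) = \sum_{k \geq 0} \operatorname{rank} H_{k}(\psi^{b}, \psi^{a}) t^{k}$, the long exact sequence applied to the triples $(\psi^{t_{j}}, \psi^{t_{j-1}}, \psi^{a})$ produces, for each $j$, a relation of the form $\mathcal{P}_{j}(t) = \mathcal{P}_{j-1}(t) + \mathcal{M}_{j}(t) - (1+t) Q_{j}(t)$ with $Q_{j}(t) \in \mathbb{Z}_{\geq 0}[t]$, where $\mathcal{M}_{j}(t)$ collects the critical groups at level $c_{j}$. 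Telescoping through $j = 1, \ldots, m$ and invoking the finiteness of the Morse-type numbers to control the sums yields the claimed equality with $Q(t) = \sum_{j} Q_{j}(t)$ having non-negative integer coefficients.

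The main obstacle is the careful use of the Palais–Smale condition: it is what guarantees the strong deformation retract of $\psi^{c+\varepsilon}$ onto $\psi^{c-\varepsilon}$ away from the critical fiber, and without it the telescope of sublevel sets need not faithfully capture the topology. A further subtlety is the treatment of several critical points at the same critical level, which requires a careful use of excision on pairwise disjoint neighborhoods and is the reason the hypothesis insists that every critical point inside $\psi^{-1}((a,b))$ be isolated.
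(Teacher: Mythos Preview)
Your sketch is the standard argument for the Morse relation and is essentially correct: slice the interval $(a,b)$ by regular values between consecutive critical levels, use the second deformation lemma (powered by the Palais--Smale condition) to identify $H_{k}(\psi^{t_{j}},\psi^{t_{j-1}})$ with the direct sum of the local critical groups at level $c_{j}$ via excision, and then telescope through the long exact sequences of triples to obtain both the inequality in (1) and the Poincar\'e-polynomial identity in (2).

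However, there is nothing to compare it to in this paper. The lemma is stated with the tag ``(see \cite{papageorgiou})'' and is \emph{not} proved here; it is imported verbatim from the monograph of Papageorgiou--R\u{a}dulescu--Repov\v{s} as a black box and then applied in Section~\ref{infinitly}. So your plan is not an alternative to the paper's proof but rather a reconstruction of the classical proof that the paper simply cites. If you wish to supply a proof where the paper gives none, your outline is the right one; the only point worth tightening is the passage from the exact sequence of a triple to the identity $\mathcal{P}_{j}(t)=\mathcal{P}_{j-1}(t)+\mathcal{M}_{j}(t)-(1+t)Q_{j}(t)$, where one should make explicit that $Q_{j}(t)$ is the Poincar\'e polynomial of the image of the connecting homomorphism, which is why its coefficients are non-negative integers.
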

\begin{theorem} (see \cite{Panda}
Let $\psi \in C^{2}(Y, \mathbb{R})$ satisfy the Palais-Smale condition, and let a be a regular value of $\psi$. Then, $H_*\left(Y, \psi^{a}\right) \neq 0,$ implies that $K_{\psi} \cap \psi^{a} \neq \emptyset.$
\end{theorem}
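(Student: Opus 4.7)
The plan is to argue the contrapositive via the classical second deformation lemma of Morse theory. Assume that $K_{\psi}\cap\psi^{a}=\emptyset$, so that no critical point of $\psi$ has value $\leq a$; I will then deduce $H_{*}(Y,\psi^{a})=0$, which negates the hypothesis.

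First I would construct a locally Lipschitz pseudo-gradient field $V$ for $\psi$ on the regular set $Y\setminus K_{\psi}$; this is available because $\psi\in C^{2}(Y,\mathbb{R})$. Let $\eta(t,\cdot)$ denote the flow generated by a suitably cut-off multiple of $-V$. Since $a$ is a regular value and the Palais--Smale condition holds, there exist $\varepsilon>0$ and $\delta>0$ such that $\|V\|\geq\delta$ on the strip $\psi^{-1}([a-\varepsilon,a+\varepsilon])$; consequently trajectories cross this strip in time at most $2\varepsilon/\delta^{2}$, which yields a strong deformation retract of $\psi^{a+\varepsilon}$ onto $\psi^{a-\varepsilon}$ in the spirit of \cite{papageorgiou, perera}.

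Next I would iterate along a cofinal sequence of regular values $a<b_{1}<b_{2}<\cdots$ increasing to $\sup_{Y}\psi$ and glue the local deformations into a global strong deformation retraction $r:Y\to\psi^{a}$, choosing the time reparametrisation so that the partial flows telescope on $[0,1]$. The Palais--Smale condition, combined with the assumption $K_{\psi}\cap\psi^{a}=\emptyset$, forces every orbit to enter $\psi^{a}$ in finite (rescaled) time: otherwise one could extract a sequence $\eta(t_{n},u)$ forming a Palais--Smale sequence, whose subsequential limit would be a critical point with value $\leq a$, contradicting the hypothesis.

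Finally, homotopy invariance of singular relative homology applied to $r$ gives $H_{k}(Y,\psi^{a})=0$ for every $k\in\mathbb{N}_{0}$, contradicting $H_{*}(Y,\psi^{a})\neq 0$. The main obstacle is precisely the global gluing and time-reparametrisation step, since one must ensure that countably many local deformations assemble into a continuous map that lands in $\psi^{a}$; once the Palais--Smale condition is used jointly with the absence of critical points in $\psi^{a}$ to force orbits into $\psi^{a}$ in uniformly bounded time, the remainder is a mechanical assembly of the local deformations and the conclusion follows.
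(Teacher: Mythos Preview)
The paper does not prove this theorem; it is quoted as a preliminary result from \cite{Panda} and is immediately followed by Section~3 with no intervening argument. There is therefore no in-paper proof to compare your proposal against.

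Independently of that, your proposal has a genuine gap. You assume $K_{\psi}\cap\psi^{a}=\emptyset$ and attempt to build a strong deformation retraction of all of $Y$ onto $\psi^{a}$ by flowing along the negative pseudo-gradient and gluing local deformations across regular levels $a<b_{1}<b_{2}<\cdots$. But the hypothesis only rules out critical points with value at most $a$; it says nothing about critical points above level $a$. Between successive $b_{i}$ there may well be critical values, and the flow will stall at the corresponding critical points rather than descend into $\psi^{a}$. Your contradiction step is exactly where this bites: you claim that an orbit which never enters $\psi^{a}$ produces a Palais--Smale sequence whose limit is a critical point with value $\leq a$. In fact such an orbit satisfies $\psi(\eta(t,u))>a$ for all $t$, so any extracted Palais--Smale sequence has values bounded below by $a$, and its limit is a critical point with value $\geq a$ (indeed $>a$, since $a$ is regular). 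That is entirely compatible with $K_{\psi}\cap\psi^{a}=\emptyset$ and yields no contradiction.

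A concrete obstruction: take $Y=\mathbb{R}$, $\psi(x)=-x^{2}$, $a=-1$. Then $\psi\in C^{2}$, Palais--Smale holds, $a$ is regular, $K_{\psi}=\{0\}$ so $K_{\psi}\cap\psi^{a}=\emptyset$, yet $\psi^{a}=(-\infty,-1]\cup[1,\infty)$ and the long exact sequence of the pair gives $H_{1}(\mathbb{R},\psi^{a})\cong G\neq 0$. Thus the statement as written is false, and any purported proof must break somewhere; in your argument it breaks at the step identified above.
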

 \section{The Approximated Problem}
 We suggest an approximate problem sequence as       \begin{eqnarray}\label{pro}
\begin{gathered}
\left\{\begin{array}{llll}
 \mathcal{L}^{s_{1}, s_{2}}_{p(\mathrm{x}, .), q(\mathrm{x}, .)} \mathrm{w}_{n}(\mathrm{x})=  \displaystyle \frac{ g_{n}(\mathrm{x}, \mathrm{w}_{n}(\mathrm{x}))} {\left( \mathrm{w}_{n}(\mathrm{x})+\frac{1}{n}\right) ^{\xi (\mathrm{x})}}  + \mathcal{V}(\mathrm{x}) \vert \mathrm{w}_{n}(\mathrm{x})+\frac{1}{n} \vert^{\sigma(\mathrm{x})-2}  \left( \mathrm{w}_{n}(\mathrm{x})+\frac{1}{n}\right) & \text { in } & \mathcal{U}, \\
\hspace{4cm} \displaystyle \mathrm{w}_{n}>0 & \text { in }&  \mathcal{U},\\
\hspace{4cm} \displaystyle \mathrm{w}_{n}=0  &\text { in }&  \mathbb{R}^{N} \backslash \mathcal{U}, 
\end{array}\right.
\end{gathered}
\end{eqnarray} because the energy functional linked to our problem is not differentiable due to the inclusion of a singular term.
 $\displaystyle g_{n}(\mathrm{x}, t))=\displaystyle \min(n, g(\mathrm{x}, t)),$   $\displaystyle G_{n}( \mathrm{x},t)= \displaystyle\int_{0}^{t} \frac{g_{n}(\mathrm{x}, s)}{(s+ \frac{1}{n})^{\xi(\mathrm{x})}} ds, $ and  $g_{n}:\mathcal{U}\times\mathbb{R}\to \mathbb{R}$  is a sequence of functions that verifies the following conditions.\\
 $( \mathcal{H}_{2})$ There exist  $\displaystyle \theta>p^{+}$ and $\displaystyle r>0$ such that for a.e $\displaystyle \mathrm{x}\in  \mathcal{U} $ and $\displaystyle \vert\mathrm{x}\vert \geq r, $ $$\displaystyle 0< \theta G_{n}(x, t)\leq  \frac{t g_{n}(\mathrm{x}, t)}{(t+ \frac{1}{n})^{\xi(\mathrm{x})}}.$$
 $( \mathcal{H}_{3})$   It holds  $$ \lim_{t\to +\infty } \frac{g_{n}(\mathrm{x}, t)}{t^{p^{+}}}=l_{1}  \text{  uniformly for a.e }  \mathrm{x}\in  \mathcal{U},$$\\
 $( \mathcal{H}_{4})$  There exist $ \eta> \sigma^{-}$ and 
 $a_{3}>0 $  such that $$g_{n}(\mathrm{x}, t)t- \eta G_{n}(\mathrm{x}, t)\geq -a_{3}\vert t\vert^{p^{-}}$$
 for all $\mathrm{x}\in \mathcal{U}$ and $t\in\mathbb{R}.$
  \begin{example}
  Set $g_{n}(t)= l (t+\frac{1}{n})^{2},$ $p(\mathrm{x}, \mathrm{y})=p^{-}=2,$ and $\xi(\mathrm{x})=1.$ A trivial verification shows that $( \mathcal{H}_{1})-  ( \mathcal{H}_{4})$ are satisfied under a suitable condition on $\eta,$ $a_{3}, $ and $\theta.$
   \end{example}
 \begin{remark}
 If  the function $g$ satisfies  condition $( \mathcal{H}_{1}).$ Then, the sequence of function $g_{n}$ also verifies condition $( \mathcal{H}_{1}).$
 \end{remark}
 \subsection{Computation of   critical  group}\label{computation}
 For the sake of simplicity, we note  $Y_{1}:= W^{s_{1}, m_{1}(\mathrm{x}),  p(\mathrm{x},  \mathrm{y})}(\mathcal{U})$ and $Y_{2}:= W^{s_{2}, m_{2}(\mathrm{x}),  q(\mathrm{x},  \mathrm{y})}(\mathcal{U}).$
 \begin{definition}
 We  say that  $\displaystyle \{\mathrm{w}_{n}\}_{n \in \mathbb{N}} $ to be a weak solution of (\ref{pro}) if 
 \begin{align*}
 &\int_{\mathcal{U} \times \mathcal{U}} \frac{\vert\mathrm{w}_{n}(\mathrm{x})-\mathrm{w}_{n}(\mathrm{y})\vert^{p(\mathrm{x}, \mathrm{y})-2}(\mathrm{w}_{n}(\mathrm{x})-\mathrm{w}_{n}(\mathrm{y}))(\varphi(\mathrm{x})-\varphi(\mathrm{y}))}{ \vert \mathrm{x}-\mathrm{y}\vert^{N+s_{1} p(\mathrm{x}, \mathrm{y})} }d\mathrm{x}d\mathrm{y}\\
 &+\int_{\mathcal{U} \times \mathcal{U}} \frac{\vert\mathrm{w}_{n}(\mathrm{x})-\mathrm{w}_{n}(\mathrm{y})\vert^{q(\mathrm{x}, \mathrm{y})-2}(\mathrm{w}_{n}(\mathrm{x})-\mathrm{w}_{n}(\mathrm{y}))(\varphi(\mathrm{x})-\varphi(\mathrm{y}))}{ \vert \mathrm{x}-\mathrm{y}\vert^{N+s_{2} q(\mathrm{x}, \mathrm{y})} }d\mathrm{x}d\mathrm{y}\\&= \int_{\mathcal{U}} \left[ \frac{ g_{n}(\mathrm{x}, \mathrm{w}_{n}(\mathrm{x}))}{\left( \mathrm{w}_{n}(\mathrm{x})+\frac{1}{n}\right) ^{\xi (\mathrm{x})}}+ \mathcal{V}(\mathrm{x}) \vert \mathrm{w}_{n}(\mathrm{x})+\frac{1}{n} \vert^{\sigma(\mathrm{x})-2} \left( \mathrm{w}_{n}(\mathrm{x})+\frac{1}{n}\right) \right] 
  \varphi(\mathrm{x}) d\mathrm{x}, 
   \end{align*}
for all   $\varphi\in Y_{1}^{*},$  where $\displaystyle Y_{1}^{*}$ is the dual space of $\displaystyle Y_{1}.$
  \end{definition}
Consider the energy functional $\displaystyle \psi:Y_{1} \to \mathbb{R} $ defined by 
$$\psi(\mathrm{w}_{n})=\psi_{1}(\mathrm{w}_{n})-\psi_{2}(\mathrm{w}_{n})-  \psi_{3}(\mathrm{w}_{n}),$$
where 
\begin{align*}
\displaystyle \psi_{1}(\mathrm{w}_{n})&= \displaystyle \int_{\mathcal{U} \times \mathcal{U}}  \left[ \frac{1}{p(\mathrm{x}, \mathrm{y})} \frac{\vert\mathrm{w}_{n}(\mathrm{x})-\mathrm{w}_{n}(\mathrm{y})\vert^{p(\mathrm{x}, \mathrm{y})}}{ \vert \mathrm{x}-\mathrm{y}\vert^{N+s_{1} p(\mathrm{x}, \mathrm{y})} }+ \frac{1}{q(\mathrm{x}, \mathrm{y})} \frac{\vert\mathrm{w}_{n}(\mathrm{x})-\mathrm{w}_{n}(\mathrm{y})\vert^{q(\mathrm{x}, \mathrm{y})}}{ \vert \mathrm{x}-\mathrm{y}\vert^{N+s_{2} q(\mathrm{x}, \mathrm{y})} } \right] d\mathrm{x}d\mathrm{y},
\end{align*}
$
\displaystyle \psi_{3}(\mathrm{w}_{n})= \int_{\mathcal{U}}\frac{ \mathcal{V}(\mathrm{x})}{\sigma(\mathrm{x})}  \vert \mathrm{w}_{n}(\mathrm{x})+\frac{1}{n} \vert^{\sigma(\mathrm{x})}   d\mathrm{x}, 
$
  $\displaystyle \psi_{2}(\mathrm{w}_{n})=\displaystyle \int_{\mathcal{U}} G_{n}(\mathrm{x}, \mathrm{w}_{n}(\mathrm{x}))d\mathrm{x}, $ and $\displaystyle G_{n}(\mathrm{x}, t)=\int_{0 }^{t}\frac{g_{n}(\mathrm{x}, s)}{(s+\frac{1}{n})^{\xi(\mathrm{x})}}ds $ is the primitive of $\frac{g_{n}(\mathrm{x}, s)}{(s+\frac{1}{n})^{\xi(\mathrm{x})}}.$
\begin{lemma}\label{frechet}
If $\displaystyle g $ satisfies  $( \mathcal{H}_{1})$  condition and the potential $\mathcal{V}$ satisfies $( \mathrm{V}).$
Then $\displaystyle \psi_{2} + \psi_{3}\in C^{1}(Y_{1}, \mathbb{R}) $ and $$\langle (\psi_{2} + \psi_{3})^{'}(\mathrm{w}_{n}), \mathrm{v}_{n}\rangle =\int_{\mathcal{U}}  \left[  \frac{g_{n}\left( \mathrm{x},  \mathrm{w}_{n}(\mathrm{x})\right) }{(\mathrm{w}_{n}(\mathrm{x})+\frac{1}{n})^{\xi (\mathrm{x})}}+ \mathcal{V}(\mathrm{x}) \vert \mathrm{w}_{n}(\mathrm{x})+\frac{1}{n} \vert^{\sigma(\mathrm{x})-2}   \left( \mathrm{w}_{n}(\mathrm{x})+\frac{1}{n} \right) \right]  \mathrm{v}_{n}(\mathrm{x}) d\mathrm{x}, $$
 for all $\displaystyle \mathrm{w}_{n}, \mathrm{v}_{n} \in  Y_{1}.$
\end{lemma}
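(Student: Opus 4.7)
The plan is to establish Fréchet differentiability of $\psi_2$ and $\psi_3$ separately via a standard three-step argument: well-definedness on $Y_1$, computation of the Gâteaux derivative via dominated convergence, and continuity of that derivative as a map $Y_1\to Y_1^*$. The growth control ultimately rests on the continuous and compact embeddings of Theorem \ref{embedding} applied with the exponent $r(x)$ (for $\psi_2$) together with the inequality in hypothesis $(\mathrm V)$ (for $\psi_3$).

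For well-definedness of $\psi_3$, I would expand $|w_n+\tfrac1n|^{\sigma(x)}$ using the elementary inequality $(a+b)^{\sigma^+}\le C(a^{\sigma^+}+b^{\sigma^+})$ and handle the constant piece via boundedness of $\mathcal U$ and continuity of $\mathcal V$, the remaining piece by $(\mathrm V)$. For $\psi_2$, condition $(\mathcal H_1)$ (inherited by $g_n\le g$) gives $|g_n(x,t)|\le\beta(x)(1+|t|^{r(x)-1})$, so that
\[
|G_n(x,t)|\le n^{\xi^+}\int_0^{|t|}\beta(x)(1+s^{r(x)-1})\,ds\le C_n\,\beta(x)\bigl(|t|+|t|^{r(x)}\bigr),
\]
and the embedding $Y_1\hookrightarrow L^{r(x)}(\mathcal U)$ yields finiteness.

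For the Gâteaux derivative, fix $w_n,v_n\in Y_1$ and $t\neq 0$ small, and form the difference quotient $h_t(x):=[G_n(x,w_n+tv_n)-G_n(x,w_n)]/t$. Since $\partial_s G_n(x,s)=g_n(x,s)/(s+\tfrac1n)^{\xi(x)}$, the mean value theorem delivers pointwise convergence to $g_n(x,w_n)/(w_n+\tfrac1n)^{\xi(x)}\,v_n$. A dominating function of the form $C_n\,\beta(x)(1+|w_n|+|v_n|)^{r(x)-1}|v_n|$ is integrable by variable exponent Hölder combined with Theorem \ref{embedding}; dominated convergence then yields the $\psi_2$-contribution. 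The $\psi_3$-contribution is analogous, using pointwise differentiation of $t\mapsto|t+\tfrac1n|^{\sigma(x)}/\sigma(x)$ with majorant supplied by $(\mathrm V)$. For continuity, if $w^{(k)}\to w$ in $Y_1$ then the compact embedding into $L^{r(x)}(\mathcal U)$ (and into $L^{\sigma(x)}(\mathcal U)$ via $(\mathrm V)$) gives, along a subsequence, a.e.\ convergence with $L^{r(x)}$ and $L^{\sigma(x)}$ majorants. Continuity of the Nemytskii maps $u\mapsto g_n(x,u)/(u+\tfrac1n)^{\xi(x)}$ into $L^{r'(x)}(\mathcal U)$ and $u\mapsto\mathcal V(x)|u+\tfrac1n|^{\sigma(x)-2}(u+\tfrac1n)$ into $L^{\sigma'(x)}(\mathcal U)$ then follows from Proposition \ref{proposition} and the usual subsequence-subsequence trick; Hölder yields
\[
\sup_{\|v\|_{Y_1}\le 1}\bigl|\langle (\psi_2+\psi_3)'(w^{(k)})-(\psi_2+\psi_3)'(w),v\rangle\bigr|\to 0.
\]

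The main obstacle is the singular denominator $(w_n+\tfrac1n)^{\xi(x)}$: the $+\tfrac1n$ regularization prevents blow-up for each fixed $n$, but constants like $n^{\xi^+}$ enter the estimates and are \emph{not} uniform in $n$. This is harmless here (since $n$ is fixed throughout the lemma), but the bookkeeping must carefully track the variable exponent $\xi(x)\in(0,1]$ so that the Nemytskii operators genuinely land in the conjugate spaces $L^{r'(x)}$ and $L^{\sigma'(x)}$; because $r$ and $\sigma$ are only continuous and not constant, Proposition \ref{proposition} on the modular-norm equivalence must be invoked in place of any single-exponent Hölder step.
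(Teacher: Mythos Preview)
Your proposal is correct and follows essentially the same strategy as the paper: mean value theorem plus dominated convergence for the G\^ateaux derivative, and growth control via $(\mathcal H_1)$, $(\mathrm V)$, and the embedding of Theorem~\ref{embedding} throughout. The only minor difference is that, for continuity of the derivative, the paper establishes uniform boundedness and equi-integrability of $\bigl|g_n(\cdot,w_{n,k})/(w_{n,k}+\tfrac1n)^{\xi}\bigr|^{r'(\cdot)}$ and invokes the Vitali convergence theorem, whereas you use the compact embedding to extract a.e.\ convergent subsequences with $L^{r(\cdot)}$ majorants and apply dominated convergence via the subsequence-of-subsequence argument; both routes are standard and equally valid here.
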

\begin{proof}
(i) $\displaystyle \psi_{2}$ is  Gateaux differentiable in $\displaystyle Y_{1}.$\\
 Let $\displaystyle \mathrm{w}_{n}, \mathrm{v}_{n} \in  Y_{1}, $ and $0<t<1, $ we have 
\begin{align*}
\frac{1}{t}(G_{n}\left( \mathrm{x}, \mathrm{w}_{n}+t\mathrm{v}_{n})-G_{n}(\mathrm{x}, \mathrm{w}_{n})\right) &=\frac{1}{t} \int _{0}^{\mathrm{w}_{n}+t\mathrm{v}_{n}} \frac{g_{n}(\mathrm{x}, s)}{(s+\frac{1}{n})^{\xi(\mathrm{x})}} ds- \frac{1}{t} \int _{0}^{\mathrm{w}_{n}} \frac{g_{n}(\mathrm{x}, s)}{(s+\frac{1}{n})^{\xi(\mathrm{x})}}ds\\
&=\frac{1}{t} \int _{\mathrm{w}_{n}}^{\mathrm{w}_{n}+t\mathrm{v}_{n}} \frac{g_{n}(\mathrm{x}, s)}{(s+\frac{1}{n})^{\xi(\mathrm{x})}}ds.
\end{align*}
By the mean value theorem, there exists $\displaystyle 0<\delta<1$ such that $$\frac{1}{t}(G_{n}(\mathrm{x}, \mathrm{w}_{n}+t\mathrm{v}_{n})-G_{n}(\mathrm{x}, \mathrm{w}_{n}))=\frac{g_{n}(x, \mathrm{w}_{n}+\delta t\mathrm{v}_{n})}{(\mathrm{w}_{n}+\delta t\mathrm{v}_{n}+\frac{1}{n})^{\xi (\mathrm{x})}}\mathrm{v}_{n}. $$
Combining   $( \mathcal{H}_{1})$ with Young's inequality, we have 
\begin{align*}
g_{n}(\mathrm{x}, \mathrm{w}_{n}+\delta t\mathrm{v}_{n})& \leq  g(\mathrm{x}, \mathrm{w}_{n}+\delta t\mathrm{v}_{n}) \\
 &\leq \beta (\vert\mathrm{v}_{n}\vert+  \vert\mathrm{w}_{n}+ \delta t \mathrm{v}_{n}\vert^{r(\mathrm{x})} \vert\mathrm{v}_{n}\vert)\\
 &\leq   \beta 2^{r^{+}} (1+  \vert\mathrm{w}_{n}\vert^{r(\mathrm{x})}+\vert\mathrm{v}_{n}\vert^{r(\mathrm{x})}).
\end{align*}
Since $\displaystyle r(\mathrm{x})\in (1,  p_{s_{1}}^{*}(\mathrm{x})),$  we  have $\displaystyle \mathrm{w}_{n}, \mathrm{v}_{n}\in  L^{r(\mathrm{x}}(\mathcal{U}).$ Thanks to the Lebesgue's  dominated converge Theorem, we get 
\begin{align}
\begin{split}
\displaystyle \lim_{t \to 0} \frac{1}{t} (G_{n}(\mathrm{x}, \mathrm{w}_{n}+t\mathrm{v}_{n})-G_{n}(\mathrm{x}, \mathrm{w}_{n}))&=  \lim_{t\to 0} \int_{\mathcal{U}} \frac{g_{n}(\mathrm{x},\mathrm{w}_{n}+ \delta t\mathrm{v}_{n} )}{(\mathrm{w}_{n}+ \delta t \mathrm{v}_{n}+\frac{1}{n})^{\xi (\mathrm{x})}}  \mathrm{v}_{n} d\mathrm{x}\\
&= \int_{\mathcal{U}} lim_{t\to 0} \frac{g_{n}(\mathrm{x},\mathrm{w}_{n}+ \delta t\mathrm{v}_{n} )}{(\mathrm{w}_{n}+ \delta t \mathrm{v}_{n}+\frac{1}{n})^{\xi (\mathrm{x})}}  \mathrm{v}_{n} d\mathrm{x}\\
&= \int_{\mathcal{U}} \frac{g_{n}(\mathrm{x},\mathrm{w}_{n} )}{(\mathrm{w}_{n}+\frac{1}{n})^{\xi (\mathrm{x})}}  \mathrm{v}_{n} d\mathrm{x}.
\end{split}
\end{align}
\begin{align}\label{equation1}
\begin{split}
  \langle\psi^{'}_{3} (\mathrm{w}_{n}),  \mathrm{v}_{n}\rangle&= \lim_{t\to 0} \frac{\psi_{3} (\mathrm{w}_{n}+t  \mathrm{v}_{n} ) - \psi_{3} (\mathrm{w}_{n}) }{t}\\
  &= \lim_{t\to 0}  \int_{\mathcal{U}}  \frac{\mathcal{V}(\mathrm{x})} { t \sigma(\mathrm{x})} \left(  \vert \mathrm{w}_{n}+ \mathrm{v}_{n}t+\frac{1}{n} \vert ^{\sigma(\mathrm{x})}-  \vert \mathrm{w}_{n}+ \frac{1}{n} \vert ^{\sigma(\mathrm{x})}\right) d \mathrm{x}.
  \end{split}
\end{align}
Considering the function defined by $L:[0, 1]\to\mathbb{R}$ as  $L(z)=\frac{\mathcal{V}(\mathrm{x})}{ \sigma(\mathrm{x})} \vert \mathrm{w}_{n}+ z\mathrm{v}_{n}t+\frac{1}{n} \vert ^{\sigma(\mathrm{x})}.$ 
According to the mean value Theorem, there exists $0<\varepsilon<1$ such that 
 \begin{equation}\label{equation2}
L^{'}(z)(\varepsilon)= L(1)- L(0).
\end{equation}
Combining (\ref{equation1}) with (\ref{equation2}), it follows that  $\langle\psi^{'}_{3} (\mathrm{w}_{n}),  \mathrm{v}_{n}\rangle=\int_{\mathcal{U}}   \mathcal{V}(\mathrm{x}) \vert \mathrm{w}_{n}(\mathrm{x})+\frac{1}{n} \vert^{\sigma(\mathrm{x})-2} \left(  \mathrm{w}_{n}(\mathrm{x})+\frac{1}{n} \right)   \mathrm{v}_{n}(\mathrm{x}) d\mathrm{x}. $
\end{proof}
(ii) The continuity of Gateaux-derivatives. Let  $\displaystyle 
\{\mathrm{w}_{n, k} \}_{k\in \mathbb{N} }\subset  Y_{1}  $ such that $\displaystyle \mathrm{w}_{n, k} \to \mathrm{w}_{n}  $ strongly in   $\displaystyle Y_{1}$ as $k \to +\infty.$ We use H\"{o}lder's inequality and condition $( \mathcal{H}_{1}), $ we have that  
\begin{align*}
\displaystyle \int_{\mathcal{U}}  \vert\frac{g_{n}(\mathrm{x}, \mathrm{w}_{n, k})}{(\mathrm{w}_{n, k}+\frac{1}{n})^{\xi(\mathrm{x})}}\vert^{ r^{'}(\mathrm{x})} d\mathrm{x}& \leq  \int_{\mathcal{U}} \vert g_{n}(\mathrm{x}, \mathrm{w}_{n, k})\vert^{ r^{'}(\mathrm{x})} d\mathrm{x}\\
& \leq
\int_{\mathcal{U}} \vert g(\mathrm{x}, \mathrm{w}_{n, k})\vert^{ r^{'}(\mathrm{x})} d\mathrm{x}\\
& \displaystyle \leq 2 ^{\frac{r^{+}+1}{r^{+}-1}} \|\beta \|_{\infty}^{\frac{r^{+}+1}{r^{+}-1}} \int_{\mathcal{U}} \vert\mathrm{w}_{n, k}\vert^{r(\mathrm{x})} d\mathrm{x}\\
& \leq C(\beta,  r^{+})\int_{\mathcal{U}} \vert\mathrm{w}_{n, k}\vert^{r(\mathrm{x})} d\mathrm{x} \\
& \leq C(\|\beta \|_{\infty},  r^{+}) \| \vert \mathrm{w}_{n, k}\vert \|_{L ^{\frac{p_{s_{1}^{*}(\mathrm{x})}}{r(\mathrm{x})}}(\mathcal{U})} \| 1\|_{L ^{\frac{p_{s_{1}^{*}(\mathrm{x})}}{p_{s_{1}^{*}(\mathrm{x})}-r(\mathrm{x})}}(\mathcal{U})}. 
\end{align*}
So, the sequence  $\displaystyle \{ \vert  \frac{g_{n}(\mathrm{x}, \mathrm{w}_{n, k})}{(\mathrm{w}_{n, k}+\frac{1}{n})^{\xi(\mathrm{x})}}-  \frac{g_{n}(\mathrm{x}, \mathrm{w}_{n})}{(\mathrm{w}_{n}+\frac{1}{n})^{\xi(\mathrm{x})}} \vert^{r(\mathrm{x})}\}_{k\in \mathbb{N}} $ is uniformly bounded and equi-integrable in $\displaystyle L^{1}(\mathcal{U}).$ Thanks to Vitali converge theorem implies $$\lim_{k \to +\infty}\int_{\mathcal{U}} \vert  \frac{g_{n}(\mathrm{x}, \mathrm{w}_{n, k})}{(\mathrm{w}_{n, k}+\frac{1}{n})^{\xi(\mathrm{x})}}-  \frac{g_{n}(\mathrm{x}, \mathrm{w}_{n})}{(\mathrm{w}_{n}+\frac{1}{n})^{\xi(\mathrm{x})}} \vert^{r^{'}(\mathrm{x})} d\mathrm{x}=0, $$
where  $\displaystyle \frac{1}{r^{'}(\mathrm{x})}+ \frac{1}{r(\mathrm{x})}=1.$ Thus, by Theorem  \ref{embedding} and H\"{o}lder's inequality, we have 
\begin{align*}
\begin{split}
\displaystyle \| \psi^{'}_{2} (\mathrm{w}_{n, k})- \psi^{'}_{2} (\mathrm{w}_{n})\|_{Y_{1}^{*}}
&=\displaystyle\sup_{\mathrm{v}_{n}\in Y_{1}}   \| \langle\psi^{'}_{2} (\mathrm{w}_{n, k})- \psi^{'}_{2} (\mathrm{w}_{n}),  \mathrm{v}_{n} \rangle
\|_{Y_{1}}\\
&\displaystyle \leq     \vert  \langle\psi^{'}_{2} (\mathrm{w}_{n, k})- \psi^{'}_{2} (\mathrm{w}_{n}) ,  \mathrm{v}_{n}\rangle \vert\\
& \displaystyle \leq \| \frac{g_{n}(\mathrm{x}, \mathrm{w}_{n, k})}{(\mathrm{w}_{n, k}+\frac{1}{n})^{\xi(\mathrm{x})}}-  \frac{g_{n}(\mathrm{x}, \mathrm{w}_{n})}{(\mathrm{w}_{n}+\frac{1}{n})^{\xi(\mathrm{x})}} \|_{L^{q^{'}_{1}(\mathrm{x})}(\mathcal{U})} \| \mathrm{v}_{n}\|_{L^{q_{1}(\mathrm{x})}(\mathcal{U})}  \to 0 \text{ as } k\to +\infty, 
\end{split}
\end{align*}
where $\displaystyle Y_{1}^{*}$ is the dual space of $\displaystyle Y_{1}.$ Similarly, we prove that $\psi^{'}_{3}$ continuous in $Y_{1}.$
From the Lemma \ref{frechet} and  Lemma 4.1 in \cite{aberqi},  we have that  $\displaystyle \psi\in C^{1}(Y_{1}, \mathbb{R}), $ and
\begin{align*} 
\langle\psi^{'} (\mathrm{w}_{n, k}),  \mathrm{v}_{n}\rangle = &\int_{\mathcal{U}\times\mathcal{U}} \frac{\vert\mathrm{w}_{n, k}(\mathrm{x})-\mathrm{w}_{n, k}(\mathrm{y})\vert^{p(\mathrm{x}, \mathrm{y})-2}(\mathrm{w}_{n, k}(\mathrm{x})-\mathrm{w}_{n, k}(\mathrm{y}))(\mathrm{v}_{n}(\mathrm{x})-\mathrm{v}_{n}(\mathrm{y}))}{ \vert \mathrm{x}-\mathrm{y}\vert^{N+s_{1} p(\mathrm{x}, \mathrm{y})} }d\mathrm{x}d\mathrm{y}\\
&+\int_{\mathcal{U} \times \mathcal{U}} \frac{\vert\mathrm{w}_{n, k}(\mathrm{x})-\mathrm{w}_{n, k}(\mathrm{y})\vert^{q(\mathrm{x}, \mathrm{y})-2}(\mathrm{w}_{n, k}(\mathrm{x})-\mathrm{w}_{n, k}(\mathrm{y}))(\mathrm{v}_{n}(\mathrm{x})-\mathrm{v}_{n}(\mathrm{y}))}{ \vert \mathrm{x}-\mathrm{y}\vert^{N+s_{2} q(\mathrm{x}, \mathrm{y})} }d\mathrm{x}d\mathrm{y} \\
 &-\int_{\mathcal{U}}  \left[  \frac{g_{n}\left( \mathrm{x},  \mathrm{w}_{n}(\mathrm{x})\right) }{(\mathrm{w}_{n}(\mathrm{x})+\frac{1}{n})^{\xi (\mathrm{x})}}+ \mathcal{V}(\mathrm{x}) \vert \mathrm{w}_{n}(\mathrm{x})+\frac{1}{n} \vert^{\sigma(\mathrm{x})-2}   \left( \mathrm{w}_{n}(\mathrm{x})+\frac{1}{n} \right) \right]  \mathrm{v}_{n}(\mathrm{x}) d\mathrm{x}, 
\end{align*}
for all $\mathrm{v}_{n}\in Y_{1}^{*}.$
\begin{theorem}  \label{palais }
The functional  $\displaystyle \psi$ satisfies the Palais-Smale condition at level $\displaystyle c \in\mathbb{R}.$
\end{theorem}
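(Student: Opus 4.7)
The plan is to verify the Palais–Smale condition at level $c$ in four stages: boundedness of a PS-sequence, extraction of a weak/pointwise/strong-$L^{r(x)}$ limit, vanishing of the nonlinear terms in the duality pairing, and finally strong convergence in $Y_1$ via the $(S_+)$-type property of the fractional double-phase operator.

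First, let $(w_k)\subset Y_1$ satisfy $\psi(w_k)\to c$ and $\psi'(w_k)\to 0$ in $Y_1^{*}$. To prove boundedness, I would compute the combination $\theta\,\psi(w_k)-\langle\psi'(w_k),w_k\rangle$, where $\theta>p^{+}$ is the exponent supplied by $(\mathcal{H}_2)$. The double-phase term $\psi_1$ yields, by the definition of the Luxemburg seminorm and condition \eqref{l30}, a lower bound of the form $(\theta/p^{+}-1)\rho(w_k)$, where $\rho$ controls $\|w_k\|_{Y_1}^{p^{-}}$ once $\|w_k\|_{Y_1}>1$. The nonlinear singular term is handled by $(\mathcal{H}_2)$, which turns $\int G_n - \frac{1}{\theta}\int \frac{w_k g_n(\cdot,w_k)}{(w_k+1/n)^{\xi}}$ into a controlled remainder on $\{|w_k|\le r\}$. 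The potential term is bounded by condition $(\mathrm{V})$, which gives $\int \mathcal{V}|w_k+1/n|^{\sigma(x)}\le \eta_1\|w_k\|_{Y_1}+\mathrm{const}$, and since $\sigma^{+}<p^{-}$ this is absorbed by the dominant $p^{-}$-power. Combining these estimates yields $\|w_k\|_{Y_1}\le C$.

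Second, by Lemma \ref{separability}, $Y_1$ is reflexive, so along a subsequence $w_k\rightharpoonup w$ in $Y_1$. Theorem \ref{embedding} provides compact embeddings into $L^{r(x)}(\mathcal{U})$ and $L^{\sigma(x)}(\mathcal{U})$ (since $r(x)<p^{*}_{s_1}(x)$ by $(\mathcal{H}_1)$ and $\sigma^{+}<p^{-}\le p^{*}_{s_1}$), giving strong convergence in those Lebesgue spaces and pointwise a.e. convergence after a further subsequence. Using the growth bound from $(\mathcal{H}_1)$ together with the regularization $(w_k+1/n)^{\xi(x)}\ge n^{-\xi^{+}}$, the argument already carried out in Lemma \ref{frechet} (Vitali's theorem applied to the dominating sequence $|g_n(x,w_k)/(w_k+1/n)^{\xi(x)}|^{r'(x)}$) shows
\begin{equation*}
\int_{\mathcal{U}}\frac{g_n(x,w_k)}{(w_k+1/n)^{\xi(x)}}(w_k-w)\,dx\;\longrightarrow\;0,
\qquad
\int_{\mathcal{U}}\mathcal{V}(x)\bigl|w_k+\tfrac{1}{n}\bigr|^{\sigma(x)-2}(w_k+\tfrac{1}{n})(w_k-w)\,dx\;\longrightarrow\;0.
\end{equation*}

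Third, since $\psi'(w_k)\to 0$ and $w_k-w$ is bounded in $Y_1$, we have $\langle\psi'(w_k),w_k-w\rangle\to 0$. Subtracting the two displayed limits above leaves
\begin{equation*}
\langle\psi_1'(w_k),w_k-w\rangle\;\longrightarrow\;0.
\end{equation*}
Finally, to pass from this to $w_k\to w$ strongly in $Y_1$, I would invoke the $(S_+)$-property of the fractional $(p(x,\cdot),q(x,\cdot))$-operator (available in the earlier reference \cite{aberqi} and already used implicitly in Lemma \ref{frechet}): if $w_k\rightharpoonup w$ and $\limsup_k\langle\psi_1'(w_k),w_k-w\rangle\le 0$, then $w_k\to w$ in $Y_1$. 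Together with the modular–norm equivalence recalled in Proposition \ref{proposition}, this yields strong convergence in $Y_1$ and hence the Palais–Smale condition at level $c$.

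The main obstacle I anticipate is the boundedness step: the interplay of three different scales (the $p^{-}$–$p^{+}$ window of the double-phase modular, the singular factor $(w_k+1/n)^{-\xi(x)}$, and the vanishing potential in $L^{\sigma(x)}$) must be balanced carefully, and strict inequality $\sigma^{+}<q^{-}<p^{-}$ from \eqref{l30} is essential to absorb the potential and the $G_n$-remainder into the leading modular term.
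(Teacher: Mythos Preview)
Your proposal is correct and follows a genuinely different route from the paper. For boundedness, the paper argues by contradiction: it normalizes $v_{n,k}=w_{n,k}/\|w_{n,k}\|_{Y_1}$, combines $\eta\psi(w_{n,k})-\langle\psi'(w_{n,k}),w_{n,k}\rangle$ with condition $(\mathcal{H}_4)$ (not $(\mathcal{H}_2)$), and derives a contradiction from $\|v_{n,k}\|_{Y_1}=1$; you instead run the direct Ambrosetti--Rabinowitz estimate $\theta\psi(w_k)-\langle\psi'(w_k),w_k\rangle$ with $(\mathcal{H}_2)$ and absorb the potential term linearly via $(\mathrm{V})$. For strong convergence, the paper applies the Brezis--Lieb lemma to split the $p$- and $q$-modulars, and then uses $\psi'(w_{n,k})\to 0$ together with the limiting equation to force the remainder $\|v_{n,k}\|_{Y_1}\to 0$; you instead isolate $\langle\psi_1'(w_k),w_k-w\rangle\to 0$ and invoke the $(S_+)$-property of the double-phase operator. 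Your route is the more standard and arguably cleaner one: the $(S_+)$ argument avoids the somewhat delicate bookkeeping of Brezis--Lieb in the variable-exponent setting, and the direct AR-estimate sidesteps the normalization machinery. The paper's approach, on the other hand, makes the role of $(\mathcal{H}_4)$ explicit and exhibits the asymptotic splitting of the modulars, which it reuses later when passing from the approximated problem to the original one.
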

\begin{proof} 
Let $\displaystyle \{\mathrm{w}_{n, k}\}_{k\in \mathbb{N}} \subset Y_{1} $  be a Palais-Smale  sequence of $\psi$  at level c. Then, we have 
\begin{align}\label{smallcondition}
\psi\left(\mathrm{w}_{n, k}\right)=c+o(1),  \text { and }\psi^{\prime}\left(\mathrm{w}_{n, k}\right)=o(1).
\end{align}
\subsection*{Claim 1: The sequence  $\{\mathrm{w}_{n, k}\}_{k\in \mathbb{N}}$  is uniformly bounded in $Y_{1}$}
By using the contradiction approach, we prove Claim 1.
We assume the claim  1 does not hold, that is up to a subsequence still denoted by $\displaystyle \{\mathrm{w}_{n, k} \}_{k\in  \mathbb{N}}$ such that $\Vert \mathrm{w}_{n, k} \Vert_{Y_{1}}\to +\infty $ as $\displaystyle k\to +\infty $ in  $\displaystyle Y_{1}.$  Let  us $\displaystyle  \mathrm{v}_{n, k}:= \frac{\mathrm{w}_{n, k}}{ \|\mathrm{w}_{n, k} \|_{Y_{1}}}.$  Clearly $\displaystyle \{\mathrm{v}_{n, k}\}_{k\in  \mathbb{N}}$ is  bounded in $\displaystyle Y_{1}.$ Since  $Y_{1}$  is a reflexive Banach space, up to a subsequence still denoted by $\displaystyle \{\mathrm{v}_{n, k}\}_{k\in \mathbb{N}} $ such  that:
\begin{eqnarray}\label{reflexive}
\begin{gathered}
\left\{\begin{array}{llll}
   \mathrm{v}_{n, k} \rightharpoonup \mathrm{v}_{n}   \text { weakly in }  Y_{1} \text{ as }  k\to \infty, \\
\mathrm{v}_{n, k} \rightarrow \mathrm{v}_{n}   \text { strongly }   k\to +\infty  \text { in } L^{a(\mathrm{x})}(\mathcal{U}) \text { for  all } 1< a(\mathrm{x})< p_{s_{1}}^{*}(\mathrm{x}),\\ 
\mathrm{v}_{n, k} \to \mathrm{v}_{n}   \text { a.e  in }  \mathcal{U} \text{ as }  k\to \infty.
\end{array}\right.
\end{gathered}
\end{eqnarray}
Combining \eqref{smallcondition} with $ \frac{1}{\left\|\mathrm{w}_{n, k}\right\|_{Y_{1}}}=o(1), $ we have 
\begin{align}\label{k9}
\begin{split}
 &  \frac{\|\mathrm{v}_{n, k} \|^{p^{+}}_{Y_{1}}}{p^{-}}+ \frac{\|\mathrm{w}_{n, k} \|^{p^{+}-q^{-}}_{Y_{1}}  \|\mathrm{v}_{n, k} \|^{q^{+}}_{Y_{2}}}{q^{-}} -  \|\mathrm{w}_{n, k} \|^{-p^{-}}_{Y_{1}} \int_{\mathcal{U}} G_{n}(\mathrm{x}, \mathrm{w}_{n, k} ) d\mathrm{x}-  \frac{\|\mathrm{w}_{n, k} \|^{\sigma^{-}-p^{+}}_{Y_{1}}}{\sigma^{+}} \int_{\mathcal{U}}   \mathcal{V}(\mathrm{x}) \vert \mathrm{w}_{n}(\mathrm{x})+\frac{1}{n} \vert^{\sigma(\mathrm{x})} d \mathrm{x}\\
 &= o(1),
  \end{split}
  \end{align}
and
\begin{align}\label{kx}
\begin{split}
 & \|\mathrm{v}_{n, k} \|^{p^{+}}_{Y_{1}}+ \|\mathrm{w}_{n, k} \|^{q^{+}-p^{-}}_{Y_{1}}  \|\mathrm{v}_{n, k} \|^{q^{+}}_{Y_{2}}
  -\|\mathrm{w}_{n, k} \|^{-p^{-}}_{Y_{1}} \int_{\mathcal{U}}   \frac{g_{n}(\mathrm{x}, \mathrm{w}_{n, k} ) }{ {(\mathrm{w}_{n, k}+ \frac{1}{n})}^{\xi (\mathrm{x})} } \mathrm{w}_{n, k}(\mathrm{x}) d\mathrm{x}- \|\mathrm{w}_{n, k} \|^{\sigma^{-}-p^{+}}_{Y_{1}} \int_{\mathcal{U}}   \mathcal{V}(\mathrm{x}) \vert \mathrm{w}_{n}(\mathrm{x})+\frac{1}{n} \vert^{\sigma(\mathrm{x})} d \mathrm{x}\\
  &= o(1).
 \end{split}
  \end{align}
We use \eqref{kx} and \eqref{k9}, we have 
\begin{align}\label{k10}
\begin{split}
&\left( \frac{\eta}{p^{-}}-1\right) \|\mathrm{v}_{n, k} \|^{p^{-}}_{Y_{1}}+ \left( \frac{\eta}{q^{-}}-1\right) \|\mathrm{w}_{n, k} \|^{q^{-}-p^{+}}_{Y_{1}}  \|\mathrm{v}_{n, k} \|^{q^{+}}_{Y_{2}}- \left(\frac{\eta}{\sigma^{-}}-1  \right)  \|\mathrm{w}_{n, k} \|^{\sigma^{-}-p^{+}}_{Y_{1}} \int_{\mathcal{U}}   \mathcal{V}(\mathrm{x}) \vert \mathrm{w}_{n}(\mathrm{x})+\frac{1}{n} \vert^{\sigma(\mathrm{x})} d \mathrm{x}\\
 &- \eta\|\mathrm{w}_{n, k} \|^{-p^{-}}_{Y_{1}}  \int_{\mathcal{U}} \left(G_{n}(\mathrm{x}, \mathrm{w}_{n, k} )-\frac{g_{n}(\mathrm{x}, \mathrm{w}_{n, k} ) }{ {(\mathrm{w}_{n, k}+ \frac{1}{n})}^{\xi (\mathrm{x})} } \mathrm{w}_{n, k}(\mathrm{x}) \right) d\mathrm{x}= o(1).
\end{split}
  \end{align}
We use $( \mathcal{H}_{4}),$ we can write
 \begin{align}\label{k11}
\begin{split}
\left( \frac{\eta}{p^{-}}-1\right) \|\mathrm{v}_{n, k} \|^{p^{-}}_{Y_{1}}=&\left( 1- \frac{\eta}{q^{+}}\right)  \|\mathrm{w}_{n, k} \|^{q^{+}-p^{-}}_{Y_{1}}  \|\mathrm{v}_{n, k} \|^{q^{+}}_{Y_{2}}+  \eta \|\mathrm{w}_{n, k} \|^{-p^{-}}_{Y_{1}} \left( \int_{\mathcal{U}} G_{n}(\mathrm{x}, \mathrm{w}_{n, k} )-\frac{g_{n}(\mathrm{x}, \mathrm{w}_{n, k} ) }{ {(\mathrm{w}_{n, k}+ \frac{1}{n})}^{\xi (\mathrm{x})} } \mathrm{w}_{n, k}(\mathrm{x}) d\mathrm{x}\right) \\
&+  \left( 1- \frac{\eta}{\sigma^{+}}\right) \|\mathrm{w}_{n, k} \|^{\sigma^{-}-p^{+}}_{Y_{1}}\int_{\mathcal{U}}   \mathcal{V}(\mathrm{x}) \vert \mathrm{w}_{n}(\mathrm{x})+\frac{1}{n} \vert^{\sigma(\mathrm{x})} d \mathrm{x}+ o(1)\\
&\leq  \left( 1- \frac{\eta}{q^{+}}\right)  \|\mathrm{w}_{n, k} \|^{q^{+}-p^{-}}_{Y_{1}} \|\mathrm{v}_{n, k} \|^{q^{+}}_{Y_{2}}+  \left( 1- \frac{\eta}{\sigma^{+}}\right) \|\mathrm{w}_{n, k} \|^{\sigma^{-}-p^{+}}_{Y_{1}} \int_{\mathcal{U}}   \mathcal{V}(\mathrm{x}) \vert \mathrm{w}_{n}(\mathrm{x})+\frac{1}{n} \vert^{\sigma(\mathrm{x})} d \mathrm{x}\\
&+ a_{3}\|\mathrm{w}_{n, k}\|^{p^{-}- q^{-}}_{Y_{1}} \| \mathrm{v}_{n}\|^{p^{-}}_{L^{p^{-}}(\mathcal{U})} + o(1)\\
&= o(1).
\end{split}
 \end{align}
 as $k\to \infty.$ This is a contradiction as  $\displaystyle \| \mathrm{v}_{n}\|_{Y_{1}} =1,$  and hence Claim 1 follows. Consequently, there exists $\mathrm{w}_{n}\in Y_{1} $
 such that up to a subsequence
 \begin{eqnarray}\label{reflexive1}
\begin{gathered}
\left\{\begin{array}{llll}
   \mathrm{w}_{n, k} \rightharpoonup \mathrm{w}_{n}   \text { weakly in }  Y_{1} \text{ as }  k\to \infty, \\
\mathrm{w}_{n, k} \rightarrow \mathrm{w}_{n}   \text { strongly }    \text { in } L^{a(\mathrm{x})}(\mathcal{U}) \text { as } k\to +\infty  \text { for  all } 1< a(\mathrm{x})< p_{s_{1}}^{*}(\mathrm{x}),\\ 
\mathrm{w}_{n, k} \to \mathrm{w}_{n}   \text { a.e  in }  \mathcal{U} \text{ as }  k\to \infty.
\end{array}\right.
\end{gathered}
\end{eqnarray}
From $( \mathcal{H}_{1}),$  and $( \mathrm{V}),$  we get 
\begin{align} 
\int_{\mathcal{U}} \frac{ g_{n}\left(\mathrm{x}, \mathrm{w}_{n, k}\right) \mathrm{w}_{n, k} }{\frac{1}{n}+\mathrm{w}_{n, k} }d \mathrm{x} & =\int_{\mathcal{U}} \frac{g_{n}(\mathrm{x}, \mathrm{w}_{n} ) \mathrm{w}_{n} }{\frac{1}{n}+ \mathrm{w}_{n}} d \mathrm{x}+o(1), \\
\int_{\mathcal{U}} G_{n}\left(\mathrm{x}, \mathrm{w}_{n, k}\right) d \mathrm{x} & =\int_{\mathcal{U}} G_{n}(\mathrm{x},\mathrm{w}_{n} ) d \mathrm{x}+o(1),
\end{align}
 \begin{align} 
\int_{\mathcal{U}} \mathcal{V}(\mathrm{x}) \vert \mathrm{w}_{n, k}+\frac{1}{n}\vert^{\sigma(\mathrm{x})} d \mathrm{x}=\int_{\mathcal{U}} \mathcal{V}(\mathrm{x})\vert \mathrm{w}_{n}+\frac{1}{n} \vert^{\sigma(\mathrm{x})} d \mathrm{x}+o(1).
\end{align}
 We have also  $\displaystyle \{\mathrm{w}_{n, k}\}_{k\in \mathbb{N}}$ is bounded in  $\displaystyle Y_{2}.$ Since $\mathrm{w}_{n, k} \to \mathrm{w}_{n} $ a.e. in $\displaystyle \mathcal{U}$  as $k\to +\infty,$ we have that
\begin{align*}
\frac{\vert\mathrm{w}_{n, k}(\mathrm{x})-\mathrm{w}_{n, k}(\mathrm{y})\vert^{p(\mathrm{x}, \mathrm{y})-2}(\mathrm{w}_{n, k}(\mathrm{x})-\mathrm{w}_{n, k}(\mathrm{y}))}{ \vert \mathrm{x}-\mathrm{y}\vert^{(\frac{N}{p(\mathrm{x}, \mathrm{y})}+ s_{1})(p(\mathrm{x}, \mathrm{y})-1)}}
 \to  \frac{\vert\mathrm{w}_{n}(\mathrm{x})-\mathrm{w}_{n}(\mathrm{y})\vert^{p(\mathrm{x}, \mathrm{y})-2}(\mathrm{w}_{n}(\mathrm{x})-\mathrm{w}_{n}(\mathrm{y}))}{ \vert \mathrm{x}-\mathrm{y}\vert^{(\frac{N}{p(\mathrm{x}, \mathrm{y})}+ s_{1})(p(\mathrm{x}, \mathrm{y})-1)}}
\end{align*}
a.e $\displaystyle (\mathrm{x}, \mathrm{y})\in  \mathcal{U}\times \mathcal{U} $ as $\displaystyle k\to +\infty.$ Since  $\displaystyle \{ \mathrm{w}_{n, k} \}_{k\in\mathbb{N}}$ is bounded in $\displaystyle Y_{1},$  there exist  $\displaystyle c>0$ such that $$
\int_{\mathcal{U}\times \mathcal{U}} \vert \frac{\vert\mathrm{w}_{n, k}(\mathrm{x})-\mathrm{w}_{n, k}(\mathrm{y})\vert^{p(\mathrm{x}, \mathrm{y})-2}(\mathrm{w}_{n, k}(\mathrm{x})-\mathrm{w}_{n, k}(\mathrm{y}))}{ \vert \mathrm{x}-\mathrm{y}\vert^{(\frac{N}{p(\mathrm{x}, \mathrm{y})}+ s_{1})(p(\mathrm{x}, \mathrm{y})-1)}}\vert ^{ \frac{p(\mathrm{x}, \mathrm{y})}{p(\mathrm{x}, \mathrm{y})-1}}  d\mathrm{x}d\mathrm{y} \leq C.$$
So, we have that \begin{align*}
 \frac{\vert\mathrm{w}_{n, k}(\mathrm{x})-\mathrm{w}_{n, k}(\mathrm{y})\vert^{p(\mathrm{x}, \mathrm{y})-2}(\mathrm{w}_{n, k}(\mathrm{x})-\mathrm{w}_{n, k}(\mathrm{y}))}{ \vert \mathrm{x}-\mathrm{y}\vert^{(\frac{N}{p(\mathrm{x}, \mathrm{y})}+ s_{1})(p(\mathrm{x}, \mathrm{y})-1)}} 
   \rightharpoonup  \frac{\vert\mathrm{w}_{n}(\mathrm{x})-\mathrm{w}_{n}(\mathrm{y})\vert^{p(\mathrm{x}, \mathrm{y})-2}(\mathrm{w}_{n}(\mathrm{x})-\mathrm{w}_{n}(\mathrm{y}))}{ \vert \mathrm{x}-\mathrm{y}\vert^{(\frac{N}{p(\mathrm{x}, \mathrm{y})}+ s_{1})(p(\mathrm{x}, \mathrm{y})-1)}} \text{ as   } k\to \infty 
 \end{align*}
weakly in   $L^{p^{'}(\mathrm{x}, \mathrm{y})}(\mathcal{U}\times \mathcal{U}), $ where $\frac{1}{p^{'}(\mathrm{x}, \mathrm{y})}+ \frac{1}{p(\mathrm{x}, \mathrm{y})}=1.$ Let $\mathrm{w}_{n}\in Y_{1},  $ it is follows that  $$\frac{\mathrm{w}_{n}(\mathrm{x})-\mathrm{w}_{n}(\mathrm{y})}{\vert \mathrm{x}-\mathrm{y}\vert^{\frac{N}{p(\mathrm{x}, \mathrm{y})}+ s_{1}}}\in L^{p(\mathrm{x}, \mathrm{y})}(\mathcal{U}\times \mathcal{U}), \text { and }  \frac{\mathrm{w}_{n}(\mathrm{x})-\mathrm{w}_{n}(\mathrm{y})}{\vert \mathrm{x}-\mathrm{y}\vert^{\frac{N}{q(\mathrm{x}, \mathrm{y})}+ s_{2}}}\in L^{q(\mathrm{x}, \mathrm{y})}(\mathcal{U}\times \mathcal{U}).$$ 
Finally, we get that  
\begin{align*}
 &\int_{\mathcal{U}\times\mathcal{U}}\frac{\vert\mathrm{w}_{n, k}(\mathrm{x})-\mathrm{w}_{n, k}(\mathrm{y})\vert^{p(\mathrm{x}, \mathrm{y})-2}(\mathrm{w}_{n, k}(\mathrm{x})-\mathrm{w}_{n, k}(\mathrm{y}))}{ \vert \mathrm{x}-\mathrm{y}\vert^{(\frac{N}{p(\mathrm{x}, \mathrm{y})}+ s_{1})p(\mathrm{x}, \mathrm{y})}} d\mathrm{x}d\mathrm{y}  \to  \int_{\mathcal{U}\times\mathcal{U}} \frac{\vert\mathrm{w}_{n}(\mathrm{x})-\mathrm{w}_{n}(\mathrm{y})\vert^{p(\mathrm{x}, \mathrm{y})-2}(\mathrm{w}_{n}(\mathrm{x})-\mathrm{w}_{n}(\mathrm{y}))}{ \vert \mathrm{x}-\mathrm{y}\vert^{(\frac{N}{p(\mathrm{x}, \mathrm{y})}+ s_{1})p(\mathrm{x}, \mathrm{y})}} d\mathrm{x}d\mathrm{y}  \text  { as } k \to   \infty 
\end{align*}
and 
\begin{align*}
& \int_{\mathcal{U}\times\mathcal{U}}\frac{\vert\mathrm{w}_{n, k}(\mathrm{x})-\mathrm{w}_{n, k}(\mathrm{y})\vert^{q(\mathrm{x}, \mathrm{y})-2}(\mathrm{w}_{n, k}(\mathrm{x})-\mathrm{w}_{n, k}(\mathrm{y}))}{ \vert \mathrm{x}-\mathrm{y}\vert^{(\frac{N}{p(\mathrm{x}, \mathrm{y})}+ s_{2})q(\mathrm{x}, \mathrm{y})}} d\mathrm{x}d\mathrm{y} 
 \to  \int_{\mathcal{U}\times\mathcal{U}} \frac{\vert\mathrm{w}_{n}(\mathrm{x})-\mathrm{w}_{n}(\mathrm{y})\vert^{q(\mathrm{x}, \mathrm{y})-2}(\mathrm{w}_{n}(\mathrm{x})-\mathrm{w}_{n}(\mathrm{y}))}{ \vert \mathrm{x}-\mathrm{y}\vert^{(\frac{N}{q(\mathrm{x}, \mathrm{y})}+ s_{2})q(\mathrm{x}, \mathrm{y})}} d\mathrm{x}d\mathrm{y} \text  { as } k \to   \infty. 
\end{align*}
\subsection*{Claim 2: $\mathrm{w}_{n, k}\to  \mathrm{w}_{n}$ strongly in $Y_{1} $ as $k\to \infty.$}
Considering  the sequence defined as $\displaystyle \mathrm{v}_{n, k}= \mathrm{w}_{n, k} - \mathrm{w}_{n}.$ Since  $\displaystyle \mathrm{w}_{n, k} \to \mathrm{w}_{n}$ a.e in $\displaystyle \mathcal{U} $ and $\displaystyle \{\mathrm{w}_{n, k}\}_{k\in\mathbb{N}}$ is uniformly bounded in  $\displaystyle Y_{1}$ and $\displaystyle Y_{2}.$ Thanks to Brezis-Lieb Lemma in \cite{fu}, we have that  
 \begin{align}\label{k16}
\begin{split}\int_{\mathcal{U}\times\mathcal{U}}\frac{\vert\mathrm{w}_{n, k}(\mathrm{x})-\mathrm{w}_{n, k}(\mathrm{y})\vert^{p(\mathrm{x}, \mathrm{y})-2}(\mathrm{w}_{n, k}(\mathrm{x})-\mathrm{w}_{n, k}(\mathrm{y}))}{ \vert \mathrm{x}-\mathrm{y}\vert^{(\frac{N}{p(\mathrm{x}, \mathrm{y})}+ s_{1})p(\mathrm{x}, \mathrm{y})}} d\mathrm{x}d\mathrm{y}
 &=\int_{\mathcal{U}\times\mathcal{U}}\frac{\vert\mathrm{v}_{n, k}(\mathrm{x})-\mathrm{v}_{n, k}(\mathrm{y})\vert^{p(\mathrm{x}, \mathrm{y})-2}(\mathrm{v}_{n, k}(\mathrm{x})-\mathrm{v}_{n, k}(\mathrm{y}))}{ \vert \mathrm{x}-\mathrm{y}\vert^{(\frac{N}{p(\mathrm{x}, \mathrm{y})}+ s_{1})p(\mathrm{x}, \mathrm{y})}} d\mathrm{x}d\mathrm{y}\\
&+\int_{\mathcal{U}\times\mathcal{U}}\frac{\vert\mathrm{w}_{n}(\mathrm{x})-\mathrm{w}_{n}(\mathrm{y})\vert^{p(\mathrm{x}, \mathrm{y})-2}(\mathrm{w}_{n}(\mathrm{x})-\mathrm{w}_{n}(\mathrm{y}))}{ \vert \mathrm{x}-\mathrm{y}\vert^{(\frac{N}{p(\mathrm{x}, \mathrm{y})}+ s_{1})p(\mathrm{x}, \mathrm{y})}} d\mathrm{x}d\mathrm{y}+  o(1),
\end{split}
\end{align}
i.e $\displaystyle   \|\mathrm{w}_{n, k}\|^{p^{+}}_{Y_{1}}=  \|\mathrm{w}_{n}\|^{p^{+}}_{Y_{1}}+  \|\mathrm{v}_{n, k}\|^{p^{+}}_{Y_{1}} +o(1).$
  Similarly, we get 
$\displaystyle    \|\mathrm{w}_{n, k}\|^{q^{+}}_{Y_{2}}=  \|\mathrm{w}_{n}\|^{q^{+}}_{Y{2}}+\|\mathrm{v}_{n, k}\|^{q^{+}}_{Y_{2}} +o(1).$ So, we have that 
\begin{eqnarray}\label{k17}
\begin{gathered}
c+o(1)= \psi(\mathrm{w}_{n, k})
\leq \frac{1}{p^{+}}  \|\mathrm{v}_{n, k}\|^{p^{+}}_{Y_{1}}+ \frac{1}{q^{+}}  \|\mathrm{v}_{n, k}\|^{q^{+}}_{Y_{2}}
+ \frac{1}{p^{+}}  \|\mathrm{w}_{n}\|^{p^{+}}_{Y_{1}}+ \frac{1}{q^{+}}  \|\mathrm{w}_{n}\|^{q^{+}}_{Y_{2}}- \int_{\mathcal{U}}G_{n}(\mathrm{x}, \mathrm{w}_{n}(\mathrm{x}))d\mathrm{x}\\
- \int_{\mathcal{U}} \mathcal{V}(\mathrm{x}) \vert \mathrm{w}_{n, k}+\frac{1}{n}\vert^{\sigma(\mathrm{x})} d \mathrm{x}.
\end{gathered}
\end{eqnarray}
On the other hand, using $\displaystyle \psi^{'}(\mathrm{w}_{n, k}) \to 0 $ as $\displaystyle k\to +\infty,$ we have that 
\begin{eqnarray}\label{equation13}
\begin{gathered}
\lim_{k \to +\infty} \|\mathrm{v}_{n, k}\|^{p^{+}}_{Y_{1}}+ \|\mathrm{v}_{n, k}\|^{q^{+}}_{Y_{2}}=  \int_{\mathcal{U}} \frac{g_{n}(\mathrm{x}, \mathrm{w}_{n} ) }{ {(\mathrm{w}_{n}+ \frac{1}{n})}^{\xi (\mathrm{x})} } \mathrm{w}_{n}(\mathrm{x}) d\mathrm{x}
-\|\mathrm{w}_{n}\|^{p^{+}}_{Y_{1}}- \|\mathrm{w}_{n}\|^{q^{+}}_{Y_{2}}- \int_{\mathcal{U}} \mathcal{V}(\mathrm{x}) \vert \mathrm{w}_{n}+\frac{1}{n}\vert^{\sigma(\mathrm{x})} d \mathrm{x}.
\end{gathered}
\end{eqnarray}
We combine (\ref{equation13}) with $\displaystyle \psi(\mathrm{w}_{n})=0, $ we obtain that  $$\lim_{k \to +\infty} \|\mathrm{v}_{n, k}\|^{p^{+}}_{Y_{1}}+ \|\mathrm{v}_{n, k}\|^{q^{+}}_{Y_{2}}=0.$$
Since   $\displaystyle \|\mathrm{v}_{n, k}\|^{p^{+}}_{Y_{1}}$ and $\|\mathrm{v}_{n, k}\|^{q^{+}}_{Y_{2}}$ are bounded sequence, we can write $\displaystyle \lim_{k \to +\infty} \|\mathrm{v}_{n, k}\|^{p^{+}}_{Y_{1}}=a $ and  $\displaystyle \lim_{k \to +\infty} \|\mathrm{v}_{n, k}\|^{q^{+}}_{Y_{2}}=b.$ Since $\displaystyle a, b\geq 0 $ and  $\displaystyle a+b=0, $ we get that $\displaystyle  a=b=0.$
Finally $\displaystyle \mathrm{w}_{n, k}  \to \mathrm{w}_{n}$ strongly in $\displaystyle Y_{1}$ as $\displaystyle k\to +\infty.$
\end{proof}
Now, we will use the notion of the local $(m,n)$ linking for computing $\dim C_{k}(\psi, 0).$  
\begin{theorem}
The functional $\displaystyle \psi$ has a local $\displaystyle (1,1)-$ linking at the origin.
\end{theorem}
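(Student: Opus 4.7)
The plan is to construct explicit sets satisfying Definition \ref{definition} with $m=n=1$ by exhibiting a one-dimensional direction of descent for $\psi$ at the origin together with a transverse codimension-one slice on which $\psi$ is strictly positive. Fix $e_0 \in Y_1$ with $\|e_0\|_{Y_1}=1$ (for instance a positive, smooth normalized element) and small parameters $0<t<\rho$; then set
$$U = \{w\in Y_1 : \|w\|_{Y_1} < \rho\},\quad F_0 = \{-t e_0,\, +t e_0\},\quad F = \{s\, e_0 : -t \le s \le t\}.$$
The first task is to show $\psi|_F \le 0$. Applying the modular--norm correspondence from the Lebesgue-space lemma to both pieces of $\psi_1$ yields $\psi_1(se_0) \le C(|s|^{p^-} + |s|^{q^-})$ for $|s|\le t$ small, while hypothesis $(\mathrm V)$ together with the lower bound $\mathcal V \ge \theta_1 > 0$ implies that the negative contribution $-\psi_3(se_0)$ has leading order comparable to $-|s|^{\sigma^+}$. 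Since $\sigma^+ < q^- \le p^-$ by \eqref{l30}, the $-\psi_3$ term dominates $\psi_1$ for $t$ small, and the $-\psi_2$ term is absorbed as a lower-order perturbation via $(\mathcal H_1)$ and the subcritical growth $r(\mathrm x) < p^{*}_{s_1}(\mathrm x)$; hence $\psi(s e_0) \le 0$ on $F$.

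For the set $D$, take a continuous linear functional $\Lambda \in Y_1^*$ with $\Lambda(e_0) \ne 0$ and set $D = \{w \in Y_1 : \Lambda(w) = 0\} \setminus \{0\}$. Then $F \cap D = \emptyset$ because $\Lambda(s e_0) = s\Lambda(e_0) \ne 0$ for $s \ne 0$, and the two points $\pm t e_0$ of $F_0$ lie in the two distinct open half-spaces of $Y_1 \setminus D$ carved out by $\Lambda$. The key remaining step is the strict inequality $\psi|_{U\cap D \setminus\{0\}} > 0$. Using Theorem \ref{embedding} to embed $Y_1$ continuously into $L^{r(\mathrm x)}(\mathcal U)$ and $L^{\sigma(\mathrm x)}(\mathcal U)$, together with $(\mathcal H_1)$ and $(\mathrm V)$, gives the bound $\psi_2(w) + \psi_3(w) \le C(\|w\|_{Y_1}^{\sigma^-} + \|w\|_{Y_1}^{r^-})$ for $w$ small, while the modular lower bound yields $\psi_1(w) \ge c\|w\|_{Y_1}^{p^+}$ when $\|w\|_{Y_1} \le 1$. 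Since $p^+ > \max(\sigma^-, r^-)$ by \eqref{l30} and $(\mathcal H_1)$, shrinking $\rho$ forces $\psi(w) > 0$ on $U \cap D \setminus\{0\}$.

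Finally, the homological count closes the argument. Since $F_0$ consists of two isolated points, $H_0(F_0) \cong \mathbb Z^2$; since the segment $F$ is path-connected, $\mathrm{im}(j^*)$ has rank $1$; and since $\pm t e_0$ lie in two distinct path-components of $Y_1 \setminus D$, $\mathrm{im}(i^*)$ has rank $2$. Therefore $\dim \mathrm{im}(i^*) - \dim \mathrm{im}(j^*) \ge 2 - 1 = 1 = n$, as required, and the isolation of $0$ as the only critical point in $\psi^0 \cap U$ is obtained by shrinking $\rho$ further. The main obstacle is the positivity step on $D$: balancing the variable exponents $p,q,\sigma,r$ through the modular/norm dichotomy of the Lebesgue-space lemma so that $\psi_1$ dominates $\psi_2 + \psi_3$, and carefully handling the constant-order residues arising from the singular-approximation $1/n$-shift in $\psi_2, \psi_3$ so that they are either absorbed by a judicious choice of $\Lambda$ or do not spoil positivity on the slice $D$.
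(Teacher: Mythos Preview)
Your argument has a genuine gap in the positivity step on $D$, and it is precisely the exponent comparison that fails. You write that $\psi_1(w)\ge c\|w\|_{Y_1}^{p^+}$ and $\psi_2(w)+\psi_3(w)\le C\big(\|w\|_{Y_1}^{\sigma^-}+\|w\|_{Y_1}^{r^-}\big)$ for small $w$, and then conclude from $p^+>\max(\sigma^-,r^-)$ that $\psi_1$ dominates. This is backwards: for $\|w\|_{Y_1}<1$ the larger exponent gives the \emph{smaller} power, so $\|w\|_{Y_1}^{p^+}\ll \|w\|_{Y_1}^{\sigma^-}$ and your lower bound on $\psi_1$ is swamped by the upper bound on $\psi_2+\psi_3$. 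In fact this is exactly the mechanism you invoked (correctly) one paragraph earlier to obtain $\psi\le 0$ on the segment $F$: because $\sigma^+<q^-<p^-$, the sub-$p$ contribution $-\psi_3$ dominates near the origin. That mechanism is insensitive to direction, so it applies equally well on any hyperplane $\{\Lambda=0\}$; there is no linear slice of $Y_1$ on which $\psi>0$ throughout a punctured neighbourhood of $0$. Your own closing remark that ``the main obstacle is the positivity step on $D$'' is accurate, and the hyperplane construction does not overcome it.

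The paper's proof sidesteps this difficulty by taking $D$ not as a linear subspace at all but as the set
\[
D=\Big\{w\in Y_1:\ \min\big(\tfrac{1}{q^+},\tfrac{1}{p^+}\big)\|w\|_{Y_1}^{p(\cdot,\cdot)}>\|\beta\|_\infty\,C\,\|w\|_{Y_1}^{l(\cdot)}\tfrac{n^{1-\xi^-}}{1-\xi^-}+\tfrac{\eta_1}{\sigma^+}\|w\|_{Y_1}\Big\},
\]
that is, $D$ is \emph{defined} by the very inequality that forces $\psi_1>\psi_2+\psi_3$, so positivity on $D$ is built in by construction rather than deduced from an exponent comparison near $0$. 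The homological count is then obtained not via half-spaces but by writing $Y_1=V\oplus Y$ with $V=\mathbb{R}$ and exhibiting an explicit deformation $h(t,w)=(1-t)w+tr\,v/\|v\|$ retracting $Y_1\setminus D$ onto $E_0=V\cap\partial B_r(0)=\{a,-a\}$, which gives $\dim\mathrm{im}(i^*)=\dim H_0(E_0)=2$ directly. Your half-space idea for computing $\dim\mathrm{im}(i^*)$ would be fine in principle, but it rests on a choice of $D$ that your positivity estimate cannot support.
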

\begin{proof}
According to $(\mathcal{H}_{3})$ and a direct computation, we have 
\begin{align}\label{jjj}
\frac{ n^{\xi(\mathrm{x})} l}{2 p(\mathrm{x}, \mathrm{y})}  \vert \mathrm{w}_{n}(\mathrm{x}) \vert ^{p^{+}+1} \leq G_{n}(\mathrm{x}, \mathrm{w}_{n}(\mathrm{x})).
\end{align}
We define $\displaystyle V=\mathbb{R}.$  Clearly $\displaystyle V$ is a one dimensional vector space subspace of $\displaystyle Y_{1}.$ We choose $\displaystyle r\in(0,1)$ such that $\displaystyle K_{\psi}\cap \overline{B_{r}(0)}=\{0 \},$ where  $\displaystyle B_{r}(0)=\{ \mathrm{w}_{n}\in  Y_{1}: \|\mathrm{w}_{n}\|_{Y_{1}}<r \}$ and 
$\displaystyle K_{\psi}=\{ \mathrm{w}_{n}\in  Y_{1}:\psi^{'}(\mathrm{w}_{n})=0\}.$ We consider the set $\displaystyle E=V\cap \overline{B_{r}(0)}$ for small enough $r\in(0, 1).$ Recall that on a finite-dimensional normed space, all norms are equivalent. So, by taking $\displaystyle r\in (0,1)$ even Smaler as necessary, we obtain that $$\|\mathrm{w}_{n}\|_{Y_{1}} \leq r   \Rightarrow   \vert\mathrm{w}_{n}\vert  \leq \delta \quad  \text { for all } \quad \mathrm{w}_{n} \in V=\mathbb{R}.$$ Then for any $\displaystyle \mathrm{w}_{n} \in V \cap  \overline{B_{r}(0)},$ we have 
\begin{align*}
 \displaystyle \psi (\mathrm{w}_{n})&=\int_{\mathcal{U}\times \mathcal{U}}   \frac{1}{p(\mathrm{x}, \mathrm{y})} \frac{\vert\mathrm{w}_{n}(\mathrm{x})-\mathrm{w}_{n}(\mathrm{y})\vert^{p(\mathrm{x}, \mathrm{y})}}{ \vert \mathrm{x}-\mathrm{y}\vert^{N+s_{1} p(\mathrm{x}, \mathrm{y})} }d\mathrm{x}d\mathrm{y}+ \int_{\mathcal{U} \times \mathcal{U}}  \frac{1}{q(\mathrm{x}, \mathrm{y})} \frac{\vert\mathrm{w}_{n}(\mathrm{x})-\mathrm{w}_{n}(\mathrm{y})\vert^{q(\mathrm{x}, \mathrm{y})}}{ \vert \mathrm{x}-\mathrm{y}\vert^{N+s_{2} q(\mathrm{x}, \mathrm{y})} }d\mathrm{x}d\mathrm{y}\\
 &-\int_{\mathcal{U}} G_{n}(\mathrm{x}, \mathrm{w}_{n}(\mathrm{x}))d\mathrm{x}- \int_{ \mathcal{U}} \frac{ \mathcal{V}(\mathrm{x}) }{\sigma(\mathrm{x})}\vert \mathrm{w}_{n}(\mathrm{x})+\frac{1}{n} \vert^{\sigma(\mathrm{x})} d \mathrm{x}\\
&\leq   \frac{1}{p^{-}} \int_{\mathcal{U}\times \mathcal{U}} \frac{\vert\mathrm{w}_{n}(\mathrm{x})-\mathrm{w}_{n}(\mathrm{y})\vert^{p(\mathrm{x}, \mathrm{y})}}{ \vert \mathrm{x}-\mathrm{y}\vert^{N+s_{1} p(\mathrm{x}, \mathrm{y})} }d\mathrm{x}d\mathrm{y} 
+\frac{1}{q^{-}}  \int_{\mathcal{U}\times \mathcal{U}} \frac{\vert\mathrm{w}_{n}(\mathrm{x})-\mathrm{w}_{n}(\mathrm{y})\vert^{q(\mathrm{x}, \mathrm{y})}}{ \vert \mathrm{x}-\mathrm{y}\vert^{N+s_{2} q(\mathrm{x}, \mathrm{y})} }d\mathrm{x}d\mathrm{y}\\
&- \frac{ n ^{\xi^{+}}l}{2(p^{+}+1)}   \int_{\mathcal{U}} \vert \mathrm{w}_{n}(\mathrm{x})\vert^{p^{+}+1} d\mathrm{x}-\frac{1}{\sigma^{+}}  \left( \frac{1}{n}\right) ^{\sigma^{+}}\vert \mathcal{U}\vert\\  
&\leq 0.
\end{align*}
Further, we consider the set $$ \displaystyle D=\left\lbrace  \mathrm{w}_{n} \in  Y_{1}:  \min (\frac{1}{q^{+}}, \frac{1}{p^{+}})  \|\mathrm{w}_{n}\|^{p(\mathrm{x}, \mathrm{y})}_{Y_{1}} >  \|\beta\|_{\infty}  C(\mathcal{U}, r, N)\|\mathrm{w}_{n}\|^{l(\mathrm{x})}_{Y_{1}}\frac{n^{1-\xi^{-}}}{1-\xi^{-}} +\frac{1}{\sigma^{+}} \eta_{1}   \|\mathrm{w}_{n}\|_{Y_{1}}\right\rbrace,$$
where $l: \mathcal{U}\to (1, \infty)$ is the continuous function  such that $l(\mathrm{x})\leq p^{*}_{s}(\mathrm{x}),$ and $C(\mathcal{U}, r, N)$ is  the positive constant.
Using condition $( \mathcal{H}_{1}),$ $( \mathrm{V}),$   and  Theorem \ref{embedding}, we have that for any $\displaystyle \mathrm{w}_{n} \in  D,$  
\begin{align*}
 \displaystyle \psi (\mathrm{w}_{n})&= \displaystyle \int_{\mathcal{U}\times \mathcal{U}}   \frac{1}{p(\mathrm{x}, \mathrm{y})} \frac{\vert\mathrm{w}_{n}(\mathrm{x})-\mathrm{w}_{n}(\mathrm{y})\vert^{p(\mathrm{x}, \mathrm{y})}}{ \vert \mathrm{x}-\mathrm{y}\vert^{N+s_{1} p(\mathrm{x}, \mathrm{y})} }d\mathrm{x}d\mathrm{y}+ \int_{\mathcal{U} \times \mathcal{U}}  \frac{1}{q(\mathrm{x}, \mathrm{y})} \frac{\vert\mathrm{w}_{n}(\mathrm{x})-\mathrm{w}_{n}(\mathrm{y})\vert^{q(\mathrm{x}, \mathrm{y})}}{ \vert \mathrm{x}-\mathrm{y}\vert^{N+s_{2} q(\mathrm{x}, \mathrm{y})} }d\mathrm{x}d\mathrm{y}\\
 &-\int_{\mathcal{U}} G_{n}(\mathrm{x}, \mathrm{w}_{n}(\mathrm{x}))d\mathrm{x} -\int_{\mathcal{U} } \frac{ \mathcal{V}(\mathrm{x}) }{\sigma(\mathrm{x})}\vert \mathrm{w}_{n}(\mathrm{x})+\frac{1}{n} \vert^{\sigma(\mathrm{x})} d \mathrm{x}\\
& \displaystyle \geq  \frac{1}{p^{+}}\|\mathrm{w}_{n}\|^{p(\mathrm{x}, \mathrm{y})}_{Y_{1}}+ \frac{1}{q^{+}} \|\mathrm{w}_{n}\|^{q(\mathrm{x}, \mathrm{y})}_{Y_{2}}-   2\|\beta\|_{\infty}  \|\mathrm{w}_{n}\|^{l(\mathrm{x})}_{Y_{1}} -\frac{1}{\sigma^{+}} \eta _{1}  \|\mathrm{w}_{n}\|_{Y_{1}}
>0.
\end{align*}
Let $\displaystyle U=  \displaystyle  \overline{B_{r}(0)}, $ $E_{0}= \displaystyle V\cap   \partial B_{r}(0),$ $E=V\cap  \overline{B_{r}(0)}, $ and $\displaystyle D$ as above, we have that $\displaystyle 0\notin E_{0}\subset E \subset U=   \overline{B_{r}(0)}$ and  $\displaystyle E_{0}\cap  D=\emptyset. $ Therefore, we arrive the following  $$\psi_{\vert E} \leq 0 < \psi_{\vert D\cap \overline{B_{r}(0)}}.$$
Let $\displaystyle Y$  be the topological complement of $\displaystyle V$. We have that $\displaystyle Y_{1}= V\bigoplus Y.$ So, every $\displaystyle \mathrm{w}_{n}\in Y_{1} $ can be written in unique way as $$\mathrm{w}_{n}=\mathrm{v}_{n}+\mathrm{y}_{n} \,\,\, \text { with }\,\,\,  \mathrm{v}_{n}\in V,  \mathrm{y}_{n}\in Y.$$
We consider the map $\displaystyle h: [0, 1]\times Y_{1}\backslash D  \to  Y_{1}\backslash D $ defined by $$h(t, \mathrm{w}_{n})= (1-t)\mathrm{w}_{n}+ tr\frac{\mathrm{v}_{n}}{\|\mathrm{v}_{n}\|}. $$
We have $\displaystyle h(0, \mathrm{w}_{n})=\mathrm{w}_{n} $ and $\displaystyle h(1, \mathrm{w}_{n})= r \frac{\mathrm{v}_{n}}{\|\mathrm{v}_{n}\|} \in V \cap \partial  B_{r}(0)= E_{0}.$ It follows that $\displaystyle E_{0}$ is a deformation retract of $\displaystyle Y_{1}\backslash D.$ Hence  $$ i^{*}:H_{0}(E_{0}) \to  H_{0}(Y_{1})$$
$\displaystyle i$ an isomorphism. Note that $\displaystyle E_{0}=\{a, -a\}$ for some $\displaystyle a\neq0. $ Therefore, from $\displaystyle \dim  H_{0}(E_{0})=2, $ since $\displaystyle H_{0}(E_{0})= \mathbb{R}\bigoplus\mathbb{R}. $ Thus $\displaystyle \dim im(i^{*})=2. $\\
The set $\displaystyle E=V\cap B_{r}(0) $ is contractible (it is an interval ). By Theorem 11.5  in \cite{eilenberg}, we have that $\displaystyle H_{0}(E, E_{0})=0.$  Thanks to Remark 6.1.26  in \cite{eilenberg}, we get $\displaystyle \dim im(j^{*})=1.$ So, finally  $$\dim im(i^{*})-\dim im(j^{*})=2-1=1.$$ Thus the hypothesis of Definition \ref{definition} are satisfied.  Hence $\displaystyle \psi$ has a local $\displaystyle (1,1)-$  linking at $\displaystyle 0.$
\end{proof}
\begin{remark} 
For all $k\in\mathbb{N}, $ $C_{k}(\psi, 0)\neq0.$
\end{remark} 
\begin{proof}
Since $\psi$ has a local $(1,1)$ linking at the origin. By  proposition 2.1 in \cite{su}, we get that  $\dim C_{k}(\psi, 0)\geq 1.$
\end{proof}
Now, we will compute the group critical of $\psi$ at infinitely.
\begin{theorem}
Suppose that   the condition  $( \mathcal{H}_{3})$ is satisfied. 
Then, there   exists $\displaystyle k\in \mathbb{N}$ such that $\displaystyle C_{k}(\psi, \infty)=0. $ 
\end{theorem}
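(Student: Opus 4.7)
The plan is to prove the stronger assertion that $C_k(\psi,\infty)=0$ for every $k\in\mathbb{N}$, by exhibiting a level $a<0$ so negative that the sublevel set $\psi^a$ is contractible; together with the contractibility of $Y_1$, the long exact sequence of the pair $(Y_1,\psi^a)$ then forces all relative homology groups to vanish.

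First I would show that $\psi$ is unbounded below along every ray. Combining $(\mathcal{H}_3)$ with the lower bound (\ref{jjj}) already derived in the preceding proof, one has $G_n(\mathrm{x},t)\ge c_n|t|^{p^++1}$ for $t$ large, while $\psi_1(t\mathrm{w})\le C\,t^{p^+}$ and $\psi_3(t\mathrm{w})$ grows only as $t^{\sigma^+}$ with $\sigma^+<p^+$. Consequently, for $\mathrm{w}\in Y_1$ with $\|\mathrm{w}\|_{Y_1}=1$ and $t>0$ large,
\[
\psi(t\mathrm{w})\le C_1 t^{p^+}-C_2 t^{p^++1}\int_{\mathcal{U}}|\mathrm{w}(\mathrm{x})|^{p^++1}d\mathrm{x}+o(t^{p^++1})\longrightarrow-\infty,
\]
and this convergence is uniform on the unit sphere of $Y_1$.

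Next, I choose $a<0$ with $|a|$ large enough that $a$ lies strictly below every critical value of $\psi$ (the set of critical values is bounded below thanks to the Palais--Smale condition proved above). For each $\mathrm{u}$ on the unit sphere $S_{Y_1}=\{\mathrm{w}\in Y_1:\|\mathrm{w}\|_{Y_1}=1\}$ define $T(\mathrm{u})=\inf\{t>0:\psi(t\mathrm{u})\le a\}$; the uniform blow-up guarantees $T(\mathrm{u})<\infty$. Using the negative pseudo-gradient flow of $\psi$, which strictly decreases $\psi$ away from $K_\psi$, one checks that $T$ is continuous and that the map $\mathrm{u}\mapsto T(\mathrm{u})\mathrm{u}$ is a homeomorphism from $S_{Y_1}$ onto a closed subset $\Sigma\subset\psi^{-1}(a)$. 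The radial homotopy
\[
\eta(s,\mathrm{w})=(1-s)\mathrm{w}+s\,T\!\left(\tfrac{\mathrm{w}}{\|\mathrm{w}\|_{Y_1}}\right)\tfrac{\mathrm{w}}{\|\mathrm{w}\|_{Y_1}},\qquad s\in[0,1],\ \mathrm{w}\in Y_1\setminus\{0\},
\]
then realises $\Sigma$ as a strong deformation retract of $Y_1\setminus\{0\}$, so $\psi^a$ is homotopy-equivalent to $S_{Y_1}$.

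Finally, I invoke the classical theorem of Bessaga that the unit sphere of any infinite-dimensional normed space is contractible. Hence $\psi^a$ is itself contractible, and since $Y_1$ is contractible as a Banach space, the long exact sequence of $(Y_1,\psi^a)$ collapses to give $H_k(Y_1,\psi^a)=0$ for every $k$; in particular $C_k(\psi,\infty)=0$ for some (indeed every) $k\in\mathbb{N}$, as required. The main obstacle is the continuity of $T$, i.e.\ ruling out tangential crossings of the level $a$ along rays. This is handled either by the Ambrosetti--Rabinowitz-type condition $(\mathcal{H}_2)$, which gives eventual strict monotonicity of $t\mapsto\psi(t\mathrm{u})$, or by replacing the radial deformation by the negative pseudo-gradient flow, whose crossings of $\psi^{-1}(a)$ are automatically transversal because $a$ lies strictly below $K_\psi$.
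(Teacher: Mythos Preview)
Your proposal follows essentially the same route as the paper: show $\psi(t\mathrm{w})\to-\infty$ along every ray, use this together with monotonicity (via $(\mathcal{H}_2)$) to identify the sublevel set $\psi^a$ with the unit sphere of $Y_1$ up to homotopy, and then invoke contractibility of the sphere in an infinite-dimensional Banach space to kill the relative homology. The paper implements the homotopy equivalence slightly differently: it proves $\frac{d}{dt}\psi(t\mathrm{w})<0$ for $t$ large, applies the implicit function theorem to obtain a \emph{unique} continuous $T$ with $\psi(T(\mathrm{w})\mathrm{w})=A$, and then uses the piecewise scaling $\tau_1(\mathrm{w})=\tau(\mathrm{w})$ for $\psi(\mathrm{w})\ge A$ and $\tau_1(\mathrm{w})=1$ for $\psi(\mathrm{w})<A$ to deform $Y_1\setminus\{0\}$ onto $\psi^A$ itself, rather than onto a section $\Sigma\subset\psi^{-1}(a)$ as you do.

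Your passage ``$\Sigma$ is a strong deformation retract of $Y_1\setminus\{0\}$, so $\psi^a$ is homotopy-equivalent to $S_{Y_1}$'' needs a word more: the radial homotopy $\eta$ as written deforms $Y_1\setminus\{0\}$ onto $\Sigma$, not $\psi^a$ onto $\Sigma$, and for $\mathrm{w}\in\psi^a$ the intermediate points $\eta(s,\mathrm{w})$ need not remain in $\psi^a$ unless you use the eventual monotonicity of $t\mapsto\psi(t\mathrm{u})$. The paper's $\tau_1$ device sidesteps this by leaving points already in $\psi^A$ fixed; with the monotonicity you invoke from $(\mathcal{H}_2)$ your version is also fine, but the implication should be routed through ``$\psi^a$ is a deformation retract of $Y_1\setminus\{0\}$'' rather than through $\Sigma$.
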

\begin{proof}
Firstly, we prove that there exists a positive constant A such that $\psi^{a}$  is homotopic to 
 exists a constant $\displaystyle A> 0$ such that $\psi^{a}$ is homotopic to $S^{1}=\{\mathrm{w}_{n} \in  Y_{1}:  \|\mathrm{w}_{n} \|_{Y_{1}}=1\}, $ for all $\displaystyle a<-A.$
From the condition $( \mathcal{H}_{3}), $  it follows that   
\begin{align*}
\psi(t\mathrm{w}_{n})&= \int_{\mathcal{U}\times \mathcal{U}} \frac{t^{p(\mathrm{x}, \mathrm{y})}}{p(\mathrm{x}, \mathrm{y})} \frac{\vert\mathrm{w}_{n}(\mathrm{x})-\mathrm{w}_{n}(\mathrm{y})\vert^{p(\mathrm{x}, \mathrm{y})}}{ \vert \mathrm{x}-\mathrm{y}\vert^{N+s_{1} p(\mathrm{x}, \mathrm{y})} }d\mathrm{x}d\mathrm{y}
+\int_{\mathcal{U} \times \mathcal{U}}  \frac{t^{q(\mathrm{x}, \mathrm{y})}}{q(\mathrm{x}, \mathrm{y})} \frac{\vert\mathrm{w}_{n}(\mathrm{x})-\mathrm{w}_{n}(\mathrm{y})\vert^{q(\mathrm{x}, \mathrm{y})}}{ \vert \mathrm{x}-\mathrm{y}\vert^{N+s_{2} q(\mathrm{x}, \mathrm{y})} }d\mathrm{x}d\mathrm{y}\\
&-\int_{\mathcal{U} } \frac{ \mathcal{V}(\mathrm{x}) }{\sigma(\mathrm{x})} t^{\sigma(\mathrm{x})}\vert \mathrm{w}_{n}(\mathrm{x})+\frac{1}{n} \vert^{\sigma(\mathrm{x})} d \mathrm{x} -\int_{\mathcal{U}} G_{n}(\mathrm{x}, t\mathrm{w}_{n}(\mathrm{x}))d\mathrm{x}\\
& \leq \frac{t^{p^{+}}}{p^{-}} + \frac{ t^{q^{+}}}{q^{-}}  \|\mathrm{w}_{n} \|^{q(\mathrm{x},  \mathrm{y})}_{Y_{2}}- \frac{l t^{p_{s_{1}}^{+*}}}{2 p_{s_{1}}^{+*}}  \int_{\mathcal{U}} \mathrm{w}_{n}(\mathrm{x})^{p_{s_{1}}^{+*}}d\mathrm{x}-\frac{\theta_{1}t^{\sigma^{+}}}{\sigma^{+}}\int_{\mathcal{U}}  \vert \mathrm{w}_{n}(\mathrm{x})+ \frac{1}{n}\vert^{\sigma(\mathrm{x})} d\mathrm{x}, 
\end{align*}
where $\displaystyle p_{s_{1}}^{+*}=\max _{\mathrm{x}\in \mathcal{U}  }p_{s_{1}}^{*}(\mathrm{x}). $ Since $\displaystyle p_{s_{1}}^{+*} >  p^{+}>  q^{+}>\sigma^{+}, $ we have that  $\displaystyle \psi(t \mathrm{w}_{n} ) \to -\infty $ as  $\displaystyle t\to  +\infty. $ Let $\displaystyle  A\in \mathbb{R} $ there   exists $t \in  \mathbb{R}  $ such that $\displaystyle  \|t \mathrm{w}_{n} \|_{Y_{1}}\geq B, $   we have that $\displaystyle \psi(t\mathrm{w}_{n}) \leq A.$ 
Since  $\displaystyle \mathrm{w}_{n} \in S^{1},$ we have that  
\begin{align*}
\displaystyle\frac{d}{dt} \psi(t\mathrm{w}_{n})&= \int_{\mathcal{U}\times \mathcal{U}} t^{p(\mathrm{x}, \mathrm{y})-1} \frac{\vert\mathrm{w}_{n}(\mathrm{x})-\mathrm{w}_{n}(\mathrm{y})\vert^{p(\mathrm{x}, \mathrm{y})}}{ \vert \mathrm{x}-\mathrm{y}\vert^{N+s_{1} p(\mathrm{x}, \mathrm{y})} }d\mathrm{x}d\mathrm{y}-
\displaystyle \int_{\mathcal{U}}  \mathrm{w}_{n}(\mathrm{x})\frac{g_{n}(\mathrm{x}, t\mathrm{w}_{n}(\mathrm{x}))}{(t \mathrm{w}_{n}(\mathrm{x})+ \frac{1}{n})^{\xi (\mathrm{x})}}d\mathrm{x}\\
&+\displaystyle \int_{\mathcal{U} \times \mathcal{U}}  t^{q(\mathrm{x}, \mathrm{y})-1} \frac{\vert\mathrm{w}_{n}(\mathrm{x})-\mathrm{w}_{n}(\mathrm{y})\vert^{q(\mathrm{x}, \mathrm{y})}}{ \vert \mathrm{x}-\mathrm{y}\vert^{N+s_{2} q(\mathrm{x}, \mathrm{y})} }d\mathrm{x}d\mathrm{y}-
 \int_{\mathcal{U}}  \mathcal{V}(\mathrm{x}) t^{\sigma(\mathrm{x})-1}\vert \mathrm{w}_{n}(\mathrm{x})+ \frac{1}{n}\vert^{\sigma(\mathrm{x})-1} d\mathrm{x}\\
& \displaystyle \leq t^{p^{+}-1}+ t^{q^{+}-1}  \|\mathrm{w}_{n} \|^{q(\mathrm{x},  \mathrm{y})}_{Y_{2}} -\int_{\mathcal{U}}  \mathrm{w}_{n}(\mathrm{x})\frac{g_{n}(\mathrm{x}, t\mathrm{w}_{n}(\mathrm{x}))}{(t \mathrm{w}_{n}(\mathrm{x})+ \frac{1}{n})^{\xi (\mathrm{x})}}d\mathrm{x}-  t^{\sigma^{-}-1}\int_{\mathcal{U}}  \mathcal{V}(\mathrm{x}) \vert \mathrm{w}_{n}(\mathrm{x})+ \frac{1}{n}\vert^{\sigma(\mathrm{x})-1} d\mathrm{x}\\
&\leq \frac{p^{+}}{t}\left[ A+  \int_{\mathcal{U}} G_{n}(\mathrm{x}, t\mathrm{w}_{n}(\mathrm{x}))d\mathrm{x}-  \frac{p^{+}}{t} \int_{\mathcal{U}}  t \mathrm{w}_{n}(\mathrm{x})\frac{g_{n}(\mathrm{x}, t\mathrm{w}_{n}(\mathrm{x}))}{(t \mathrm{w}_{n}(\mathrm{x})+ \frac{1}{n})^{\xi (\mathrm{x})}}d\mathrm{x}\right] - t^{\sigma^{-}-1}\int_{\mathcal{U}}  \mathcal{V}(\mathrm{x}) \vert \mathrm{w}_{n}(\mathrm{x})+ \frac{1}{n}\vert^{\sigma(\mathrm{x})-1} d\mathrm{x}\\
&\displaystyle \leq \frac{p^{+}}{t}\left[ A+    \left( \frac{1}{\theta }-  \frac{1}{ p^{+}}\right)  \int_{\mathcal{U}} t  \mathrm{w}_{n}(\mathrm{x})  \frac{g_{n}(\mathrm{x}, t\mathrm{w}_{n}(\mathrm{x}))}{(t \mathrm{w}_{n}(\mathrm{x})+ \frac{1}{n})^{\xi (\mathrm{x})}}d\mathrm{x}\right]  \\
& \displaystyle \leq   \frac{p^{+}}{t}\left[ A+     C_{1}\left( \frac{1}{\theta }-  \frac{1}{ p^{+}}\right) \right]- t^{\sigma^{-}-1}\int_{\mathcal{U}}  \mathcal{V}(\mathrm{x}) \vert \mathrm{w}_{n}(\mathrm{x})+ \frac{1}{n}\vert^{\sigma(\mathrm{x})-1} d\mathrm{x}\\
& <0.
\end{align*}
By the implicit function Theorem, there exists an unique  $\displaystyle T\in C(S^{1}, \mathbb{R})$ such that for any $\displaystyle \mathrm{w}_{n}\in  S^{1},$ $$ \psi(T(\mathrm{w}_{n} )\mathrm{w}_{n})=A.$$
For any $\displaystyle \mathrm{w}_{n}\neq 0,$ set $\displaystyle \tau (\mathrm{w}_{n})=\frac{1}{\|\mathrm{w}_{n}\|} T(\frac{\mathrm{w}_{n} }{\|\mathrm{w}_{n} \|}).$ Then $\displaystyle \tau \in  C(Y_{1} \backslash 0, \mathbb{R})$ and for all $\displaystyle \mathrm{w}_{n}\in  Y_{1}, $  $\displaystyle \psi( \mathrm{w}_{n} \tau(\mathrm{w}_{n}))=A.$ Moreover, if $\displaystyle \psi(\mathrm{w}_{n})=A, $ then $\displaystyle \tau(\mathrm{w}_{n})=1.$
We define a function  $\displaystyle \tau_{1}:  Y_{1} \to \mathbb{R}$ as   \\
 $$\tau_{1}(\mathrm{w}_{n}):=
\left\{\begin{array}{ll}
 \tau(\mathrm{w}_{n}), &   \text{ if }  \psi(\mathrm{w}_{n})\geq A, \\
 1, & \text { if } \psi(\mathrm{w}_{n})< A.
\end{array}
\right.\\  
$$     
Since   $ \psi(\mathrm{w}_{n})=A$ implies that $\displaystyle \tau (\mathrm{w}_{n})=1, $ we conclude that $\displaystyle \tau_{1} \in  C(Y_{1} \backslash 0, \mathbb{R}).$
Finally, we set $\displaystyle H:[0,1]\times  Y_{1} \backslash 0 \to Y_{1} \backslash 0$ as 
$$
H(t, \mathrm{w}_{n})=(1-t) \mathrm{w}_{n}+t \tau_{1}(\mathrm{w}_{n}) \mathrm{w}_{n}.
$$
We have $\displaystyle H(0, \mathrm{w}_{n})=\mathrm{w}_{n},$ $\displaystyle  H(1, \mathrm{w}_{n})=\tau_{1}(\mathrm{w}_{n})\mathrm{w}_{n}\in \psi^{A}, $ and $\displaystyle H(t, .)_{\vert\psi^{A}}=id_{\vert\psi^{A}}$ for all $\displaystyle t\in[0,1].$ It follows that  \begin{equation} \label{homotopy}
\psi^{A} \text {  is a strong deformation retract of   }  Y_{1}\backslash 0.
\end{equation}
We consider the radial retraction $\displaystyle r: Y_{1}\to \mathbb{R}$ defined by  $$r(\mathrm{w}_{n})=\frac{\mathrm{w}_{n}}{\|\mathrm{w}_{n}\|} \text {  for all } \mathrm{w}_{n} \in  Y_{1}. $$
This map is continuous and  $\displaystyle r_{\vert S^{1}}=id_{\vert S^{1}}.$ Therefore,  $\displaystyle S^{1}$ is a retract of  $\displaystyle Y_{1}\backslash 0.$ 
 Considering the map defined by $$h(t, \mathrm{w}_{n})=(1-t)\mathrm{w}_{n}+ tr(\mathrm{w}_{n}) \text {   for all }  (t, \mathrm{w}_{n})\in [0,1]\times  Y_{1}\backslash 0.$$
Then, $\displaystyle h(0, \mathrm{w}_{n})=\mathrm{w}_{n},$ $h(1, \mathrm{w}_{n})= r(\mathrm{w}_{n})\in S^{1},$ and $\displaystyle h(1, .)_{\vert S^{1}}= id _{\vert S^{1}}.$ Hence, we refer that 
\begin{equation} \label{retract}
S^{1} \text {  is a deformation retract of }  Y_{1}\backslash 0.
\end{equation}
Finally, by \ref{homotopy} and \ref{retract} it follow that $\displaystyle \psi^{a}$ and $S^{1}$ are homotopie equivalent. We already know that the space $Y_{1}$ is an infinite dimensional Banach space. From Remark 6.1.13  in \cite{papageorgiou}, it follows that the sphere unit $S^{1}$ is contractible. So, we have that  $$\displaystyle H_{k}(Y_{1}, \psi^{a}) = H_{k}(Y_{1}, S^{1})=0   \text { for all },\displaystyle k\in \mathbb{N}.$$
Finally, we obtain that \begin{align}\label{contractible}
\displaystyle C_{k}(\psi, \infty)= \displaystyle  H_{k}(Y_{1}, \psi^{a})
= \displaystyle H_{k}(Y_{1}, S^{1})
= \displaystyle 0,  \text { for all }  k\in \mathbb{N}.
\end{align}
\end{proof}
\begin{theorem}
Suppose that  conditions $( \mathrm{V}),$ and  $( \mathcal{H}_{1})- ( \mathcal{H}_{4}) $  are satisfied. Then, the problem (\ref{pro}) has  nontrivial weak solution in $\displaystyle Y_{1}.$
\end{theorem}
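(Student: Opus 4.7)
The plan is to combine the critical-group computations from Section \ref{computation} with the Morse-theoretic criterion attributed to Willem in the introduction (statement 2): if $\theta\in W$ is an isolated critical point of a $C^1$ functional satisfying (PS) and $C_k(\psi,\infty)\not\cong C_k(\psi,\theta)$ for some $k\in\mathbb{N}$, then the functional must have a non-zero critical point. The structural set-up has been laid almost entirely in the preceding subsections, so the argument is essentially an assembly job: verify the hypotheses of Willem's criterion, read off the conclusion, and translate it back to a weak solution of the approximated problem.

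First, I would invoke Lemma \ref{frechet} together with the computation displayed after its proof to ensure that $\psi\in C^{1}(Y_1,\mathbb{R})$ with the variational identity $\langle\psi'(\mathrm{w}_n),\varphi\rangle=0$ for all $\varphi\in Y_1^{*}$ coinciding exactly with the definition of a weak solution of (\ref{pro}). Then Theorem \ref{palais } supplies the Palais-Smale condition at every level $c\in\mathbb{R}$, so the Morse framework is fully available. Second, the two theorems immediately preceding the one to be proved furnish the decisive Morse data: the local $(1,1)$-linking at the origin combined with Proposition 2.1 in \cite{su} yields an index $k\in\mathbb{N}$ with $C_k(\psi,0)\neq 0$, while the homotopy chain $\psi^{a}\simeq Y_1\setminus\{0\}\simeq S^{1}$, together with contractibility of the unit sphere in the infinite-dimensional Banach space $Y_1$, gives $C_k(\psi,\infty)=0$ for every $k\in\mathbb{N}$.

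Third, I would compare these two families at the same index $k$ chosen above: since $C_k(\psi,0)\neq 0=C_k(\psi,\infty)$, we have $C_k(\psi,0)\not\cong C_k(\psi,\infty)$. Treating $0$ as an isolated critical point of $\psi$ -- which is precisely the neighbourhood condition $K_{\psi}\cap\overline{B_r(0)}=\{0\}$ used when the local linking was established -- Willem's second criterion produces a critical point $\mathrm{w}_n^{\ast}\in Y_1\setminus\{0\}$ of $\psi$. By the identification above, $\mathrm{w}_n^{\ast}$ is the sought nontrivial weak solution of the approximated problem (\ref{pro}).

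The main obstacle is the status of the origin. Because of the regularization $\tfrac{1}{n}$ and the positivity of $\mathcal{V}$, one has $\psi(0)\neq 0$ and, in general, $\psi'(0)\neq 0$, so the normalization $\psi(0)=0$ required in Definition \ref{definition} is not automatic. I would handle this by either shifting $\psi$ by the constant $\psi(0)$ (which leaves the gradient, critical groups, and Palais-Smale property intact), or, alternatively, bypassing statement (2) entirely and applying statement (1) from the introduction: since $C_k(\psi,0)\neq 0$ localizes a critical value at the origin and the critical groups at infinity vanish in every degree, statement (1) directly forces the existence of a critical point whose critical group is nonzero at the relevant index, which cannot be the origin and must therefore be nontrivial.
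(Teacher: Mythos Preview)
Your argument is correct and follows the same route as the paper: obtain $C_1(\psi,0)\neq 0$ from the local $(1,1)$-linking and then invoke a Morse-theoretic existence criterion. The paper's proof is far terser---it simply cites Theorem~6.2.42 in \cite{papageorgiou} and concludes $\mathrm{w}_n\in K_\psi$---whereas you explicitly bring in the vanishing $C_k(\psi,\infty)=0$ from the preceding theorem and use the Willem-type comparison (statement (2) of the introduction), which in fact makes the \emph{nontriviality} conclusion more transparent than in the paper's own argument. Your closing caveat about whether $0$ is genuinely a critical point of the regularized functional (because of the $1/n$ shift in $\psi_2$ and $\psi_3$) is a real issue that the paper glosses over as well; the constant-shift fix you propose is the standard remedy and does not affect critical groups or the Palais--Smale property.
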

\begin{proof}
Since $\displaystyle \psi$ has a  local $\displaystyle (1, 1)-$ linking near the origin, then $\displaystyle \dim C_{1}(\psi, 0)\geq 1,$  i,e  $\displaystyle  C_{k}(\psi, 0) \neq 0\ $ for some $\displaystyle k\in\mathbb{N}.$ Thanks to  Theorem 6.2.42 in \cite{papageorgiou}, there exists $\displaystyle \mathrm{w}_{n}\in K_{\psi}.$
\end{proof}
\begin{theorem}
Suppose that  condition $( \mathrm{V}),$ and  $( \mathcal{H}_{1})- ( \mathcal{H}_{4}) $  are satisfied. Then, the problem (\ref{pro})  has at least non-trivial weak solution in $\displaystyle Y_{1}.$
\end{theorem}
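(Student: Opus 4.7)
The plan is to combine the two critical-group computations already carried out with the Willem-type comparison principle recalled in the introduction, thereby producing a nontrivial critical point of $\psi$. By Lemma \ref{frechet}, every critical point of $\psi$ solves the weak formulation of the approximate problem \eqref{pro}, so this immediately yields a nontrivial weak solution in $Y_{1}$. The Palais--Smale condition at every level, which is required to apply both Morse's relation (Lemma \ref{morse relation}) and the comparison principle, has already been established in Theorem \ref{palais }.

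The comparison itself is sharp. From the remark following the local $(1,1)$-linking theorem, we have $\operatorname{rank} C_{k}(\psi, 0) \geq 1$ for every $k \in \mathbb{N}$, whereas the computation of the critical groups at infinity (see \eqref{contractible}) gives $C_{k}(\psi, \infty) = 0$ for every $k \in \mathbb{N}$. Thus $C_{k}(\psi, 0) \ncong C_{k}(\psi, \infty)$ for at least one $k$, and by item 2) of the Willem et al.\ statement cited at the beginning of the paper, $\psi$ must admit a critical point $\widehat{w}_{n} \neq 0$. Testing $\langle \psi'(\widehat{w}_{n}), \varphi \rangle = 0$ against arbitrary $\varphi \in Y_{1}$ and invoking the explicit form of $\psi'$ provided after Lemma \ref{frechet}, this $\widehat{w}_{n}$ satisfies the weak formulation from the definition of weak solutions of \eqref{pro}.

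The main obstacle I anticipate is ensuring that $0$ is an \emph{isolated} critical point of $\psi$, a hypothesis implicit in the very definition of $C_{k}(\psi, 0)$ and indispensable for the comparison with $C_{k}(\psi, \infty)$ to meaningfully produce a second critical point. To handle this, I would exploit the radius $r \in (0,1)$ from the local linking argument, chosen so that $K_{\psi} \cap \overline{B_{r}(0)} = \{0\}$, and then observe that $\widehat{w}_{n}$ must lie outside $\overline{B_{r}(0)}$ (otherwise it would coincide with $0$). A secondary subtlety is handling the positivity requirement $w > 0$ in \eqref{pro}: since the right-hand side of the approximated problem is strictly positive thanks to the $\tfrac{1}{n}$-shift and the vanishing potential assumption $(\mathrm{V})$, a weak comparison principle or the truncation $\widehat{w}_{n}^{+} \in Y_{1}$ can be used as a test function to deduce $\widehat{w}_{n} \geq 0$, after which strict positivity follows from the singular maximum principle in the fractional setting.
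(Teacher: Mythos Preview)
Your argument is correct and, in fact, cleaner than the paper's. The paper's own proof of this theorem is a two-line appeal: it asserts that $\psi$ satisfies Palais--Smale, is \emph{bounded from below}, and that $0$ is a minimizer which is ``homological nontrivial,'' then cites Theorem~2.1 of Liu--Su \cite{su}. This is problematic, since the previous theorem in the paper explicitly shows $\psi(t\mathrm{w}_{n})\to -\infty$ as $t\to\infty$, so $\psi$ is certainly \emph{not} bounded below, nor is $0$ a global (or even local) minimizer in view of $\psi|_{E}\le 0$. Your route avoids this entirely: you compare $C_{k}(\psi,0)\neq 0$ (from the local $(1,1)$-linking) with $C_{k}(\psi,\infty)=0$ (from \eqref{contractible}) and invoke the Willem-type principle quoted in the introduction to produce a nonzero critical point. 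This is the standard and robust Morse-theoretic mechanism, and it is exactly what the paper's \emph{preceding} theorem gestures at (via Theorem~6.2.42 of \cite{papageorgiou}), though that earlier proof only claims existence of \emph{some} critical point without distinguishing it from $0$.

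Two small remarks. First, your handling of the ``$0$ isolated'' hypothesis is fine, but you can streamline it: if $0$ is not isolated in $K_{\psi}$, then $K_{\psi}\setminus\{0\}\neq\emptyset$ trivially and you are done without any comparison. Second, the claim ``$\operatorname{rank} C_{k}(\psi,0)\ge 1$ for every $k$'' is stronger than what the local $(1,1)$-linking actually delivers; Proposition~2.1 of \cite{su} only guarantees $\dim C_{1}(\psi,0)\ge 1$. This does not affect your conclusion, since a single index $k=1$ with $C_{1}(\psi,0)\ncong C_{1}(\psi,\infty)$ suffices for the Willem principle.
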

\begin{proof}
Thanks to Theorem  \ref{palais } $\displaystyle \psi$ satisfies the Palais-Smale condition and is bounded from below and the trivial solution $\mathrm{w}_{n}=0$ is homological nontrivial and is it a minimizer. The conclusion follows from Theorem 2.1 in \cite{ su}.  
\end{proof}
\subsection{Existence of infinitely non-trivial solutions}\label{infinitly}
\begin{theorem}
Suppose that conditions $( \mathrm{V}),$ and  $( \mathcal{H}_{1})- ( \mathcal{H}_{4}) $   are satisfied. Then, the problem (\ref{pro})  has infinitely non-trivial weak solutions in $\displaystyle Y_{1}.$
\end{theorem}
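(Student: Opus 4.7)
The plan is to argue by contradiction using Morse's relation, leveraging all the ingredients already assembled: the Palais--Smale condition established in Theorem \ref{palais }, the computation $C_{k}(\psi,\infty)=0$ for every $k\in\mathbb{N}$, and the remark asserting $C_{k}(\psi,0)\neq 0$ for every $k\in\mathbb{N}$ (consequence of the local $(1,1)$-linking at the origin). Assume, for contradiction, that $\psi$ has only finitely many critical points $K_{\psi}=\{0,w^{(1)},\ldots,w^{(n)}\}$. Each of them is then isolated in $Y_{1}$, so the critical groups $C_{k}(\psi,w^{(i)})$ are well-defined for every $k\in\mathbb{N}_{0}$.

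Next I would choose $b>\max\{\psi(w^{(i)}):\ 0\le i\le n\}$ and $a<-A$, with $A$ as in the proof of $C_{k}(\psi,\infty)=0$, so that $\psi^{-1}((a,b))$ captures every critical point. Since there are no critical values above $b$ and $\psi$ satisfies Palais--Smale, the standard pseudo-gradient deformation shows that $\psi^{b}$ is a strong deformation retract of $Y_{1}$, and the homotopy equivalences \eqref{homotopy}--\eqref{retract} established above give
$$
H_{k}(\psi^{b},\psi^{a})\;\cong\;H_{k}(Y_{1},\psi^{a})\;=\;C_{k}(\psi,\infty)\;=\;0 \qquad \text{for every } k\in\mathbb{N}_{0}.
$$
Under the finiteness hypothesis, each Morse-type number $M_{k}=\sum_{i}\mathrm{rank}\,C_{k}(\psi,w^{(i)})$ is finite and, by the standard regularity of critical groups at isolated critical points of a $C^{1}$ functional on a reflexive Banach space, vanishes for all sufficiently large $k$. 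Thus item 2 of Lemma \ref{morse relation} applies and yields
$$
\sum_{k\ge 0}\Bigl(\mathrm{rank}\,C_{k}(\psi,0)+\sum_{i=1}^{n}\mathrm{rank}\,C_{k}(\psi,w^{(i)})\Bigr)t^{k}\;=\;(1+t)Q(t),
$$
where $Q(t)$ is a polynomial with non-negative integer coefficients.

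The contradiction now comes from the Remark preceding this theorem, which states that $C_{k}(\psi,0)\neq 0$ for every $k\in\mathbb{N}$. The left-hand side therefore has a strictly positive coefficient in every degree $k\ge 1$, so it is a genuine power series and not a polynomial of finite degree. This is incompatible with the right-hand side $(1+t)Q(t)$ being a polynomial, which is forced by the finiteness of $K_{\psi}$. Hence $K_{\psi}$ cannot be finite, i.e.\ $\psi$ admits an infinite sequence of critical points in $Y_{1}$, each providing a non-trivial weak solution of the approximate problem \eqref{pro}.

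The main obstacle is to justify that, under our finiteness assumption, the Morse-type numbers $M_{k}$ are finite and vanish for all large $k$, which is the precondition for applying item 2 of Lemma \ref{morse relation}. This has to be checked using the $C^{1}$ regularity of $\psi$ established in Lemma \ref{frechet}, the compactness given by Palais--Smale, and the compact embedding in Theorem \ref{embedding}, which together allow one to excise and retract near each isolated $w^{(i)}$ on a ball whose radius depends only on $\psi$. Once this finiteness is in place, the Poincaré-polynomial identity above and the Remark force $Q(t)$ to be non-polynomial, yielding the desired contradiction.
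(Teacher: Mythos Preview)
Your argument contains an internal contradiction that prevents it from going through. To invoke item~2 of Lemma~\ref{morse relation} and obtain the polynomial identity
\[
\sum_{k\ge 0}\Bigl(\operatorname{rank}C_{k}(\psi,0)+\sum_{i=1}^{n}\operatorname{rank}C_{k}(\psi,w^{(i)})\Bigr)t^{k}=(1+t)Q(t),
\]
you explicitly need the Morse-type numbers $M_{k}$ to vanish for all large $k$. But your contradiction step then uses the Remark that $C_{k}(\psi,0)\neq 0$ for every $k\in\mathbb{N}$, which forces $M_{k}\ge 1$ for every $k$. You cannot have both: either the hypothesis of the Morse identity fails (so you never get the polynomial on the right), or the Remark's conclusion fails (so the left side \emph{is} a polynomial and there is nothing to contradict). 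The ``standard regularity'' you invoke---that critical groups at isolated critical points of a $C^{1}$ functional on a reflexive Banach space vanish in high degree---is not a theorem; in infinite dimensions this is simply false without additional structure (nondegeneracy, finite Morse index, etc.). So the obstacle you flag in your last paragraph is not a technical detail to be checked but the very reason the scheme collapses.

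The paper proceeds differently and does \emph{not} rely on the Remark. Instead, inside the proof it asserts the Morse-index form $C_{k}(\psi,0)=\mathbb{R}$ for $k=m(0)$ and $C_{k}(\psi,0)=0$ otherwise---a single nonzero degree, which \emph{is} compatible with the vanishing hypothesis of Lemma~\ref{morse relation}. The paper then assumes $K_{\psi}=\{0,\mathrm{w}_{n},\mathrm{v}_{n}\}$ is finite, writes the Morse identity, uses $C_{k}(\psi,\infty)=0$ to reduce it to
\[
(1+X)Q(X)=X^{m(0)}+2\sum_{k\ge 0}\beta_{k}X^{k},
\]
and evaluates at $X=1$ to get the parity obstruction $2Q(1)=1+2\sum_{k}\beta_{k}$ (even $=$ odd). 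So the key difference is: the paper extracts a \emph{parity} contradiction from a single nonzero critical group at $0$, whereas you attempt a \emph{degree} contradiction from infinitely many nonzero critical groups at $0$, which is incompatible with the very hypothesis needed to write down the Morse identity.
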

\begin{proof}
 We suppose that  our problem admits  three non-trivial solutions
 $\displaystyle Y_{1}.$ That is $\displaystyle K_{\psi}= \{ 0, \mathrm{w}_{n}, \mathrm{v}_{n}\}.$ From Morse's relation, it follows that     \\
 $$C_{n}(\psi, 0)=
\left\{\begin{array}{ll}
 \mathbb{R}, & k=m(0), \\
 0, & \text { otherwise },
\end{array}
\right.\\  
$$     
where  $m(0)$  is a Morse index of $0$. We use  Morse's relation, we get that 
\begin{align*}
\displaystyle \sum_{k\geq 0} \rank C_{k}(\psi, \infty)X^{k}+ (1+X)Q(X)
=& \displaystyle  \sum_{k\geq 0} \rank C_{k}(\psi, 0)X^{k}+  \sum_{k\geq 0} \rank C_{k}(\psi, \mathrm{w}_{n})X^{k}+ \sum_{k\geq 0} \rank C_{k}(\psi, \mathrm{v}_{n})X^{k}&\\
=& \displaystyle X^{m(0)} + 2  \sum_{k\geq 0} \beta _{k} X^{k}.
\end{align*}
From  (\ref{contractible}), it follows that $$ \displaystyle (1+ X) Q(X)= X^{m(0)}+ 2 \sum_{k\geq 0} \beta _{k} X^{k},
$$
where $\displaystyle \beta _{k}$  nonnegative integer  and $\displaystyle Q$ is a polynomial with nonnegative integer coefficient. In particular, for $\displaystyle X=1$ we have  $\displaystyle 2a= 1+ 2 \sum_{k\geq 0} \beta_{k}.$ Since $ \displaystyle \beta_{k}\in \mathbb{N}, $ we have that  $\displaystyle \sum_{k\geq 0} \beta_{k}=+\infty $ leads to a contradiction. Thus, there exist infinitely solutions to the problem  (\ref{pro}). 
\end{proof}
\section{Fundamental Theorem}
\begin{theorem}\label{theorem fundamental}
Suppose that    conditions $( \mathrm{V}),$ and  $( \mathcal{H}_{1})- ( \mathcal{H}_{4}) $  are satisfied. Then, problem (\ref{k1}) admits an infinitely weak solutions in $\displaystyle Y_{1}.$
\end{theorem}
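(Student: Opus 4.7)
The plan is to recover infinitely many solutions of the original problem \eqref{k1} from the infinitely many solutions of the approximated problems \eqref{pro} obtained in the previous theorem, by a careful passage to the limit as $n\to\infty$. Concretely, for each $n\in\mathbb{N}$, let $\{\mathrm{w}_n^{(j)}\}_{j\in\mathbb{N}}\subset Y_1$ denote the countable family of non-trivial critical points of $\psi_n$ produced by the Morse-theoretic argument in Section \ref{infinitly}, ordered so that the associated critical values $c_j^{(n)}=\psi_n(\mathrm{w}_n^{(j)})$ are distinct and diverge to $+\infty$ in $j$. Fixing $j$ and varying $n$, the goal is to show that $\mathrm{w}_n^{(j)}\to\mathrm{w}^{(j)}$ along a subsequence, where $\mathrm{w}^{(j)}$ is a weak solution of \eqref{k1}, and that the resulting family $\{\mathrm{w}^{(j)}\}_{j\in\mathbb{N}}$ contains infinitely many distinct elements.

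First I would establish a uniform a priori bound $\|\mathrm{w}_n^{(j)}\|_{Y_1}\leq M_j$ independent of $n$ by reproducing the computation used in the proof of Theorem \ref{palais } with the actual equation: testing \eqref{pro} with $\mathrm{w}_n^{(j)}$ and combining with the energy identity $\psi_n(\mathrm{w}_n^{(j)})=c_j^{(n)}$, the assumption $(\mathcal{H}_4)$, the Ambrosetti–Rabinowitz-type condition $(\mathcal{H}_2)$ and the vanishing potential control in $(\mathrm{V})$ give $\|\mathrm{w}_n^{(j)}\|_{Y_1}^{p^-}\leq C(1+|c_j^{(n)}|)$, the right-hand side being controlled uniformly in $n$ because the critical values $c_j^{(n)}$ remain bounded as $n\to\infty$ (the Morse relations and the uniform local $(1,1)$-linking structure depend on $n$ only through $1/n$-perturbations). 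By reflexivity of $Y_1$ (Lemma \ref{separability}) and the compact embedding of Theorem \ref{embedding}, one extracts a subsequence with
\begin{equation*}
\mathrm{w}_n^{(j)}\rightharpoonup \mathrm{w}^{(j)}\ \text{in}\ Y_1,\qquad \mathrm{w}_n^{(j)}\to\mathrm{w}^{(j)}\ \text{in}\ L^{r(\mathrm{x})}(\mathcal{U}),\qquad \mathrm{w}_n^{(j)}\to \mathrm{w}^{(j)}\ \text{a.e. in }\mathcal{U}.
\end{equation*}
A strong-convergence upgrade in $Y_1$ then follows by the $S_+$-type monotonicity argument already employed in Claim 2 of Theorem \ref{palais }.

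The main obstacle is the passage to the limit in the singular nonlinearity $g_n(\mathrm{x},\mathrm{w}_n^{(j)})/(\mathrm{w}_n^{(j)}+\tfrac{1}{n})^{\xi(\mathrm{x})}$. To handle it, I would derive a uniform positive lower bound of the form $\mathrm{w}_n^{(j)}(\mathrm{x})\geq c_K>0$ on every compact subset $K\Subset\mathcal{U}$ with $c_K$ independent of $n$; such a bound is standard for this class of singular double-phase problems and follows by testing \eqref{pro} with a suitable positive sub-solution, using $\mathcal{V}>\theta_1$ from $(\mathrm{V})$ and the monotonicity of $\mathcal{L}^{s_1,s_2}_{p(\mathrm{x},\cdot),q(\mathrm{x},\cdot)}$. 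With this lower bound, the integrand $g_n(\mathrm{x},\mathrm{w}_n^{(j)})\varphi/(\mathrm{w}_n^{(j)}+\tfrac{1}{n})^{\xi(\mathrm{x})}$ is dominated by $C_K\beta(\mathrm{x})(1+|\mathrm{w}_n^{(j)}|^{r(\mathrm{x})-1})|\varphi|$ on $\mathrm{supp}\,\varphi$ for every test function $\varphi\in C_c^\infty(\mathcal{U})$, and Lebesgue's dominated convergence theorem together with the a.e. convergence $g_n(\mathrm{x},t)\to g(\mathrm{x},t)$ gives
\begin{equation*}
\int_{\mathcal{U}}\frac{g_n(\mathrm{x},\mathrm{w}_n^{(j)}(\mathrm{x}))}{(\mathrm{w}_n^{(j)}(\mathrm{x})+\tfrac{1}{n})^{\xi(\mathrm{x})}}\varphi(\mathrm{x})\,d\mathrm{x}\longrightarrow \int_{\mathcal{U}}\frac{g(\mathrm{x},\mathrm{w}^{(j)}(\mathrm{x}))}{\mathrm{w}^{(j)}(\mathrm{x})^{\xi(\mathrm{x})}}\varphi(\mathrm{x})\,d\mathrm{x}.
\end{equation*}
The remaining nonlocal and potential terms pass to the limit by the weak convergence in $Y_1\cap Y_2$ and the strong convergence in $L^{\sigma(\mathrm{x})}(\mathcal{U})$, yielding that $\mathrm{w}^{(j)}$ solves \eqref{k1} weakly on test functions in $C_c^\infty(\mathcal{U})$, hence on all of $Y_1$ by density. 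Positivity $\mathrm{w}^{(j)}>0$ a.e. is inherited from the uniform local lower bound.

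Finally, to extract infinitely many distinct solutions of \eqref{k1}, I would argue that the sequence $\{\mathrm{w}^{(j)}\}_{j\in\mathbb{N}}$ cannot accumulate onto a finite set. Indeed, if only finitely many distinct limits occurred, then by Morse's relation (Lemma \ref{morse relation}) and the identity $C_k(\psi,\infty)=0$ for all $k$ established earlier, the polynomial identity
\begin{equation*}
\sum_{j}\sum_{k\geq 0}\mathrm{rank}\,C_k(\psi,\mathrm{w}^{(j)})\,t^k + \mathrm{rank}\,C_k(\psi,0)\,t^k=(1+t)Q(t)
\end{equation*}
would force the left-hand side to be divisible by $(1+t)$; evaluating at $t=1$ and using $C_{m(0)}(\psi,0)\neq 0$ produces the same parity contradiction as in Section \ref{infinitly}. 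Therefore infinitely many of the $\mathrm{w}^{(j)}$ are pairwise distinct, and each is a positive weak solution of \eqref{k1}, proving Theorem \ref{theorem fundamental}. The hardest point, as indicated, is securing the uniform local positive lower bound on $\mathrm{w}_n^{(j)}$ that legitimises dominated convergence in the singular term; this is the place where the fractional double-phase maximum-principle machinery must be invoked rigorously.
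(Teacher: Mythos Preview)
Your overall scheme---pass to the limit in the approximated solutions---matches the paper's, but the execution diverges in two places, one of which is a real problem.

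\textbf{The a priori bound.} The paper does \emph{not} go through the critical values $c_j^{(n)}$. It simply tests \eqref{pro} with $\varphi=\mathrm{w}_n$ and uses $(\mathcal{H}_1)$ and $(\mathrm{V})$ directly: since $\mathrm{w}_n>0$ one has $(\mathrm{w}_n+\tfrac1n)^{-\xi(\mathrm{x})}\mathrm{w}_n\leq \mathrm{w}_n^{1-\xi(\mathrm{x})}$, so the right-hand side is controlled by $\|\beta\|_\infty$ times sub-critical powers of $\|\mathrm{w}_n\|_{Y_1}$ plus $\eta_1\|\mathrm{w}_n\|_{Y_1}$, yielding a bound $\|\mathrm{w}_n\|_{Y_1}\leq C(\beta,r,p,q,\xi,s_1,\mathcal{U})/(1-\eta_1)$ that is uniform in $n$ \emph{and in which solution you pick}. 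This is much simpler than your route via $c_j^{(n)}$ (whose boundedness in $n$ you only assert) and it immediately gives the weak compactness you need. Your more careful treatment of the singular term via a uniform interior lower bound is not something the paper does; the paper instead invokes dominated convergence with the pointwise majorant $|g(\mathrm{x},\mathrm{w})\varphi|$, which is quicker though less justified.

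\textbf{The distinctness argument.} Here there is a genuine gap in your plan. You propose to run Morse's relation and the identity $C_k(\psi,\infty)=0$ for the energy $\psi$ of the \emph{original} singular problem \eqref{k1} in order to force infinitely many distinct limits $\mathrm{w}^{(j)}$. But the whole point of introducing the approximations \eqref{pro} was that the singular term makes the energy of \eqref{k1} fail to be $C^1$ on $Y_1$; Lemma \ref{morse relation} and the critical-group computations of Section \ref{computation} are carried out for the \emph{approximated} functionals $\psi_n$, not for the limit. So you cannot invoke Morse theory on $\psi$ itself, and the parity contradiction you describe does not apply to the family $\{\mathrm{w}^{(j)}\}$. (The paper, for its part, simply passes one sequence $\{\mathrm{w}_n\}$ to a limit and does not revisit multiplicity in this proof; the ``infinitely many'' is inherited implicitly from the approximated problems rather than re-proved for \eqref{k1}.) If you want to salvage your argument, you would need an alternative mechanism---e.g.\ showing that distinct critical levels of $\psi_n$ persist in the limit, or that the limits have different energies---without appealing to critical groups of the non-$C^1$ functional.
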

\begin{proof}
Let $ \displaystyle \{\mathrm{w}_{n}\}_{n\in\mathbb{N}}\subset Y_{1} $ be the sequence of solutions to problem (\ref{pro}). So, we have that 
\begin{align}\label{24}
\begin{split}
&\displaystyle \int_{\mathcal{U} \times \mathcal{U}} \frac{\vert\mathrm{w}_{n}(\mathrm{x})-\mathrm{w}_{n}(\mathrm{y})\vert^{p(\mathrm{x}, \mathrm{y})-2}(\mathrm{w}_{n}(\mathrm{x})-\mathrm{w}_{n}(\mathrm{y}))(\varphi(\mathrm{x})-\varphi(\mathrm{y}))}{ \vert \mathrm{x}-\mathrm{y}\vert^{N+s_{1} p(\mathrm{x}, \mathrm{y})} }d\mathrm{x}d\mathrm{y}\\ \displaystyle
& +\int_{\mathcal{U} \times \mathcal{U}} \frac{\vert\mathrm{w}_{n}(\mathrm{x})-\mathrm{w}_{n}(\mathrm{y})\vert^{q(\mathrm{x}, \mathrm{y})-2}(\mathrm{w}_{n}(\mathrm{x})-\mathrm{w}_{n}(\mathrm{y}))(\varphi(\mathrm{x})-\varphi(\mathrm{y}))}{ \vert \mathrm{x}-\mathrm{y}\vert^{N+s_{2} q(\mathrm{x}, \mathrm{y})} }d\mathrm{x}d\mathrm{y}\\ \displaystyle
 & =\int_{\mathcal{U}} \frac{g_{n}(\mathrm{x}, \mathrm{w}_{n}(\mathrm{x}))}{(\mathrm{w}_{n}(\mathrm{x})+\frac{1}{n})^{\xi (\mathrm{x})}}
  \varphi(\mathrm{x})d\mathrm{x}+ \int_{ \mathcal{U}}  \mathcal{V}(\mathrm{x})\vert \mathrm{w}_{n}(\mathrm{x})+\frac{1}{n} \vert^{\sigma(\mathrm{x})-2} ( \mathrm{w}_{n}(\mathrm{x})+\frac{1}{n} )  \varphi(\mathrm{x})  d \mathrm{x},  \text { for all }  \varphi\in Y_{1}^{*}.
\end{split}
\end{align}
We take  $ \displaystyle \varphi= \mathrm{w}_{n} $   in   (\ref{24}), we have that 
\begin{align*}
&\int_{\mathcal{U} \times \mathcal{U}} \frac{\vert\mathrm{w}_{n}(\mathrm{x})-\mathrm{w}_{n}(\mathrm{y})\vert^{p(\mathrm{x}, \mathrm{y})}}{ \vert \mathrm{x}-\mathrm{y}\vert^{N+s_{1} p(\mathrm{x}, \mathrm{y})} }d\mathrm{x}d\mathrm{y}+
 \int_{\mathcal{U} \times \mathcal{U}} \frac{\vert\mathrm{w}_{n}(\mathrm{x})-\mathrm{w}_{n}(\mathrm{y})\vert^{q(\mathrm{x}, \mathrm{y})}}{ \vert \mathrm{x}-\mathrm{y}\vert^{N+s_{2} q(\mathrm{x}, \mathrm{y})} }d\mathrm{x}d\mathrm{y}\\
 =& \displaystyle \int_{\mathcal{U}} \frac{g_{n}(\mathrm{x}, \mathrm{w}_{n}(\mathrm{x}))}{(\mathrm{w}_{n}(\mathrm{x})+\frac{1}{n})^{\xi (\mathrm{x})}}
\mathrm{w}_{n}(\mathrm{x})d\mathrm{x} +\int_{ \mathcal{U}}  \mathcal{V}(\mathrm{x})\vert \mathrm{w}_{n}(\mathrm{x}) +\frac{1}{n}\vert^{\sigma(\mathrm{x})-2} (\mathrm{w}_{n}(\mathrm{x}) +\frac{1}{n}) \mathrm{w}_{n}(\mathrm{x})   d \mathrm{x} \\
 \leq & \displaystyle \int_{\mathcal{U}} \frac{g_{n}(\mathrm{x}, \mathrm{w}_{n}(\mathrm{x}))}{(\mathrm{w}_{n}(\mathrm{x})+\frac{1}{n})^{\xi (\mathrm{x})}}
\mathrm{w}_{n}(\mathrm{x})d\mathrm{x} +\int_{ \mathcal{U}}  \mathcal{V}(\mathrm{x})\vert \mathrm{w}_{n}(\mathrm{x})+\frac{1}{n} \vert^{\sigma(\mathrm{x})}  d \mathrm{x}. 
 \end {align*} 
 
 Combining  $(\mathcal{H}_{1})$  with $(\mathrm{V}),$  it follows that 
\begin{align*}
&\int_{\mathcal{U} \times \mathcal{U}} \frac{\vert\mathrm{w}_{n}(\mathrm{x})-\mathrm{w}_{n}(\mathrm{y})\vert^{p(\mathrm{x}, \mathrm{y})}}{ \vert \mathrm{x}-\mathrm{y}\vert^{N+s_{1} p(\mathrm{x}, \mathrm{y})} }d\mathrm{x}d\mathrm{y}+
 \int_{\mathcal{U} \times \mathcal{U}} \frac{\vert\mathrm{w}_{n}(\mathrm{x})-\mathrm{w}_{n}(\mathrm{y})\vert^{q(\mathrm{x}, \mathrm{y})}}{ \vert \mathrm{x}-\mathrm{y}\vert^{N+s_{2} q(\mathrm{x}, \mathrm{y})} }d\mathrm{x}d\mathrm{y}\\
 &\leq  \displaystyle \int_{\mathcal{U}}\frac{ g_{n}(\mathrm{x}, \mathrm{w}_{n}(\mathrm{x}))}{(\mathrm{w}_{n}(\mathrm{x})+\frac{1}{n})^{\xi (\mathrm{x})}}
\mathrm{w}_{n}(\mathrm{x})d\mathrm{x} +\int_{ \mathcal{U}}  \mathcal{V}(\mathrm{x})\vert \mathrm{w}_{n}(\mathrm{x})+\frac{1}{n} \vert^{\sigma(\mathrm{x})}  d \mathrm{x}\\
&\leq \int_{\mathcal{U}} g(\mathrm{x}, \mathrm{w}_{n}(\mathrm{x}))  \vert\mathrm{w}_{n}(\mathrm{x}))\vert ^{1- \xi (\mathrm{x})} d\mathrm{x}+ \eta_{1} \Vert\mathrm{w}_{n} \Vert_{Y_{1}} \\
 &\leq   \int_{\mathcal{U}}  \beta(\mathrm{x})(1+   \vert\mathrm{w}_{n}(\mathrm{x}))\vert ^{r(\mathrm{x})-1) } )\vert\mathrm{w}_{n}(\mathrm{x})\vert ^{1- \xi (\mathrm{x})} d\mathrm{x}+ \eta_{1} \Vert\mathrm{w}_{n} \Vert_{Y_{1}}.
\end{align*}
Since $\displaystyle r(\mathrm{x})-1\leq  r(\mathrm{x})-\xi (\mathrm{x})$ for all $\displaystyle \mathrm{x}\in  \mathcal{U}, $ we get that 
$$  \displaystyle\|\mathrm{w}_{n} \|_{Y_{1}}\leq  \frac{ \|\beta  \|_{\infty} C(r(\mathrm{x}), p(\mathrm{x}, \mathrm{y}), q_{1}(\mathrm{x}), \xi(\mathrm{x}),  s_{1}, \mathcal{U})}{1-\eta_{1}}.$$
Hence,   the  sequence $\displaystyle \{\mathrm{w} _{n}\}_{n \in \mathbb{N}}$  is bounded  in $\displaystyle Y_{1}.$ Since  $\displaystyle Y_{1} $ is a reflexive Banach space, up to a subsequence, still denoted by  $\displaystyle \{\mathrm{w}_{n}\}$ such that  $\displaystyle \mathrm{w}_{n} \rightharpoonup  \mathrm{w}$ weakly in $Y_{1}, $  $\displaystyle \mathrm{w}_{n} \to   \mathrm{w} $ strongly in $\displaystyle L^{a(\mathrm{x})}(\mathcal{U})$ for $\displaystyle 1\leq a(\mathrm{x})<  p^{*}_{s_{1}}(\mathrm{x}), $ and $\displaystyle  \mathrm{w}_{n} \to   \mathrm{w} $ a.e in $\mathcal{U}.$ A similar discussion as in Theorem  \ref{palais } gives that
\begin{eqnarray}\label{k25}
\begin{gathered}
\displaystyle \lim_{n\to \infty} [\int_{\mathcal{U} \times \mathcal{U}} \frac{\vert\mathrm{w}_{n}(\mathrm{x})-\mathrm{w}_{n}(\mathrm{y})\vert^{p(\mathrm{x}, \mathrm{y})-2}(\mathrm{w}_{n}(\mathrm{x})-\mathrm{w}_{n}(\mathrm{y}))(\varphi(\mathrm{x})-\varphi(\mathrm{y}))}{ \vert \mathrm{x}-\mathrm{y}\vert^{N+s_{1} p(\mathrm{x}, \mathrm{y})} }d\mathrm{x}d\mathrm{y}\\ \displaystyle+ \int_{\mathcal{U} \times \mathcal{U}} \frac{\vert\mathrm{w}_{n}(\mathrm{x})-\mathrm{w}_{n}(\mathrm{y})\vert^{q(\mathrm{x}, \mathrm{y})-2}(\mathrm{w}_{n}(\mathrm{x})-\mathrm{w}_{n}(\mathrm{y}))(\varphi(\mathrm{x})-\varphi(\mathrm{y}))}{ \vert \mathrm{x}-\mathrm{y}\vert^{N+s_{2} q(\mathrm{x}, \mathrm{y})} }d\mathrm{x}d\mathrm{y}]\\
 = 
 \displaystyle \int_{\mathcal{U} \times \mathcal{U}} \frac{\vert\mathrm{w}(\mathrm{x})-\mathrm{w}(\mathrm{y})\vert^{p(\mathrm{x}, \mathrm{y})-2}(\mathrm{w}(\mathrm{x})-\mathrm{w}(\mathrm{y}))(\varphi(\mathrm{x})-\varphi(\mathrm{y}))}{ \vert \mathrm{x}-\mathrm{y}\vert^{N+s_{1} p(\mathrm{x}, \mathrm{y})} }d\mathrm{x}d\mathrm{y}\\
 +\int_{\mathcal{U} \times \mathcal{U}} \frac{\vert\mathrm{w}(\mathrm{x})-\mathrm{w}(\mathrm{y})\vert^{q(\mathrm{x}, \mathrm{y})-2}(\mathrm{w}(\mathrm{x})-\mathrm{w}(\mathrm{y}))(\varphi(\mathrm{x})-\varphi(\mathrm{y}))}{ \vert \mathrm{x}-\mathrm{y}\vert^{N+s_{2} q(\mathrm{x}, \mathrm{y})} }d\mathrm{x}d\mathrm{y}.
\end{gathered}
\end{eqnarray}
 Since $\displaystyle \mathrm{w}_{n}(\mathrm{x})>0, $ we get that  $$\displaystyle \vert\frac{g_{n}(\mathrm{x}, \mathrm{w}_{n}(\mathrm{x}))\varphi(\mathrm{x})}{ (\frac{1}{n}+ \mathrm{w}_{n}(\mathrm{x}))^{\xi(\mathrm{x})}}\vert \leq \vert g(\mathrm{x}, \mathrm{w}(\mathrm{x}))\varphi(\mathrm{x})\vert.$$
From  the dominated converge theorem, it follows that  $$\displaystyle \lim_{n\to + \infty}  \int_{\mathcal{U} } \frac{g_{n}(\mathrm{x}, \mathrm{w}_{n}(\mathrm{x}))\varphi(\mathrm{x})}{ (\frac{1}{n}+ \mathrm{w}_{n}(\mathrm{x}))^{\xi(\mathrm{x})}}d\mathrm{x} =\displaystyle \int_{\mathcal{U} } \frac{g(\mathrm{x}, \mathrm{w}(\mathrm{x}))\varphi(\mathrm{x})}{ ( \mathrm{w}(\mathrm{x}))^{\xi(\mathrm{x})}}d\mathrm{x}.$$
Similarly, we prove that  \begin{align}
 \lim_{n\to \infty}\int_{\mathcal{U}} \mathcal{V}(\mathrm{x}) \vert \mathrm{w}_{n}+\frac{1}{n}\vert^{\sigma(\mathrm{x})-2} (\mathrm{w}_{n}+\frac{1}{n} )\varphi(\mathrm{x}) d \mathrm{x}=\int_{\mathcal{U}} \mathcal{V}(\mathrm{x})\vert \mathrm{w} \vert^{\sigma(\mathrm{x})-2}  \mathrm{w} (\mathrm{x}) \varphi(\mathrm{x}) d \mathrm{x}.
\end{align}
 Finally, passing to the limit in \eqref{24}, we deduce that 
 \begin{eqnarray}
\begin{gathered}
\displaystyle \int_{\mathcal{U} \times \mathcal{U}} \frac{\vert\mathrm{w}(\mathrm{x})-\mathrm{w}(\mathrm{y})\vert^{p(\mathrm{x}, \mathrm{y})-2}(\mathrm{w}(\mathrm{x})-\mathrm{w}(\mathrm{y}))(\varphi(\mathrm{x})-\varphi(\mathrm{y}))}{ \vert \mathrm{x}-\mathrm{y}\vert^{N+s_{1} p(\mathrm{x}, \mathrm{y})} }d\mathrm{x}d\mathrm{y}
 + \int_{\mathcal{U} \times \mathcal{U}} \frac{\vert\mathrm{w}(\mathrm{x})-\mathrm{w}(\mathrm{y})\vert^{q(\mathrm{x}, \mathrm{y})-2}(\mathrm{w}(\mathrm{x})-\mathrm{w}(\mathrm{y}))(\varphi(\mathrm{x})-\varphi(\mathrm{y}))}{ \vert \mathrm{x}-\mathrm{y}\vert^{N+s_{2} q(\mathrm{x}, \mathrm{y})} }d\mathrm{x}d\mathrm{y}\\= \displaystyle
\int_{\mathcal{U} } \frac{g(\mathrm{x}, \mathrm{w}(\mathrm{x}))\varphi(\mathrm{x})}{ ( \mathrm{w}(\mathrm{x}))^{\xi(\mathrm{x})}}d\mathrm{x}+\int_{\mathcal{U}} \mathcal{V}(\mathrm{x})\vert \mathrm{w} \vert^{\sigma(\mathrm{x})-2}  \mathrm{w} (\mathrm{x}) \varphi(\mathrm{x})  d \mathrm{x},  \text { for all }  \varphi\in Y_{1}^{*},
\end{gathered}
\end{eqnarray}
namely $\displaystyle \mathrm{w}$ is a weak solution to \eqref{k1}.
\end{proof}
\section*{Declarations}

\begin{itemize}
\item \textbf{Ethics approval and consent to participate}\\
This article does not contain any studies with human participants performed by any of the authors.
\item \textbf{Funding }\\
Not applicable
\item \textbf{Consent for publication}\\
The authors  consent for publication.
\item \textbf{Conflict of interest}\\
The authors have no conflicts of interest to declare that are relevant to the content of this article.
\item \textbf{  Authors' contributions}\\
These authors contributed equally to this work.
\item \textbf{Availability of data and materials}\\
Not applicable.
\end{itemize}
\section*{Acknowledgments}
The authors would like to thank the referees for their suggestions and helpful comments which have improved the presentation of the original manuscript.


\begin{thebibliography}{999}



\bibitem{aberqi6}
 Aberqi, A., Benslimane, O.,  Ouaziz, A.,   Repov$\check{s}$, D.D.: On a new fractional Sobolev space with variable exponent on complete manifolds, Boundary Value Problems, 7(1), 1-20 (2022)
\bibitem{aberqi7}
Aberqi, A., Bennouna, J., Benslimane, O., Ragusa, M.A.: Existence results for double phase problem in Sobolev-Orlicz
spaces with variable exponents in complete manifold. Mediterr. J. Math. 19, 158 (2022)

\bibitem{Ayazoglu}
Ayazoglu, R.,    Sarac, Y.,   Sener, S.,  Alisoy, G.: Existence and multiplicity of solutions for a Schr\"odinger-Kirchhoff type equation involving the fractional $p(\cdot,\cdot)-$ Laplacian operator in $\mathbb{R}^{N}$. Collect. Math. 72, 129-156 (2021)
\bibitem{Bahrouni}
 Bahrouni, A.,  R$\breve{a}$dulescu,  V.D.: On a new fractional Sobolev space and applications to nonlocal variational problems with variable exponent,
Discrete Contin. Dyn. Syst., Ser. S, 11(3), 379-389 (2018)
\bibitem{Baroni}
 Baroni, P., Colombo, M., Mingione, G.: Harnack inequalities for double phase functionals. Nonlinear Anal., Theory
Methods Appl. 121, 206-222 (2015)
\bibitem{Baroni1}
 Baroni, P., Colombo, M., Mingione, G.: Regularity for general functionals with double phase. Calc. Var. Partial Differ. Equ.
57, 1-48 (2018)

\bibitem{bartsch}
 Bartsch, T.,   Li,  S. J.: Critical point theory for asymptotically quadratic functionals and applications to problems with resonance, Nonlinear Anal. 28,  419-441 (1997)

\bibitem{Benslimane1}
 Benslimane, O.,  Aberqi, A.,   Bennouna, J.:  On some nonlinear anisotropic elliptic equations in anisotropic Orlicz space.
Arab J. Math. Sci. 29(1),  29-51  (2023)
\bibitem{Benslimane2}
Benslimane, O., Aberqi, A. Singular two-phase problem on a complete manifold: analysis and insights. Arab. J. Math. (2023). https://doi.org/10.1007/s40065-023-00443-y
\bibitem{Biswas}
 Biswas, A.,  Bahrouni,  A.,   Fiscella,  A.: Fractional double phase Robin problem involving variable-order exponents and logarithm type nonlinearity. Math. Methods Appl. Sci.  45 (2022)

 \bibitem{aberqi}
Campbell, S.L., Gear, C.W.: The index of general nonlinear DAES.
Numer. Math. 72(2), 173-196 (1995)


\bibitem{Chang}
  Chang, S.Y.A.,  Gonzalez, M. d. M.: Fractional Laplacian in conformal geometry, Adv. Math., 226, 1410-1432 (2010)
 \bibitem{chen}
 Chen,  Y. Levine, S.    Rao,  M.: Variable exponent, linear growth functionals in image restoration. SIAM J. Appl. Math. 66, 
1383-1406 (2006)
\bibitem{Crespo-Blanco}
Crespo-Blanco, A., Gasinski, L., Harjulehto, P., Winkert, P.: A new class of double phase variable exponent problems: 
existence and uniqueness. J. Differ. Equ. 323, 182-228 (2022)
\bibitem{Nezza}
 Di Nezza, E.,   Palatucci, G.,   Valdinoci, E.: Hitchhiker's guide to the fractional Sobolev spaces. Bull. Sci. Math. 136,
521-573 (2012) 
\bibitem{dienning}
  Diening, L.  Harjulehto, P.,   Hst, P.,   Ruzi$\check{c}$ka, M.: Lebesgue and Sobolev Spaces with Variable Exponents, Lecture Notes in Mathematics,  (2011)
\bibitem{derrick}
 Derrick, G.: Comments on nonlinear wave equations as models for elementary particles.
Journal of Mathematical Physics, 5(9), 1252-1254 (1964)

\bibitem{eilenberg}
 Eilenberg, S.,  Steenrod, N.: Foundations of algebraic topology (Princeton University Press, Princeton, NJ, (1952)
\bibitem{fan}
 Fan, X.L.,  Zhao, D.: On the spaces $L^{p(x)}(\Omega ) $ and $W^ {m;p(x)}(\Omega ),$ J. Math. Anal. Appl., 263,
424-446 (2001)
\bibitem{fife}
 Fife, P. C.: Mathematical aspects of reacting and diffusing systems, volume 28. Springer
Science Business Media, (2013)
\bibitem{fu}
 Fu, Y.: The principle of concentration compactness in $L^{p(x)}$ spaces and its application. Nonlinear Anal., Theory Methods Appl. 71,  1876-1892 (2009)
 \bibitem{liu1}
  Liu, H.,   Fu,  Y.:  Embedding theorems for variable exponent fractional Sobolev spaces and an application. AIMS Math. 6,
9835-9858 (2021)
\bibitem{Gasinski}
 Gasinski, L., Winkert, P.: Existence and uniqueness results for double phase problems with convection term. J. Differ. 
Equ. 268, 4183-4193 (2020)

\bibitem{gilboa}
  Gilboa, G.,  Osher, S: Nonlocal operators with applications to image processing, Multiscale Model. Simul. 7,  1005-1028 (2008)
  \bibitem{gongbao}
  Gongbao, L.,   Guo, Z.: Multiple solutions for the $(p,q)-$ Laplacian problem with critical
exponent. Acta Mathematica Scientia, 29(4), 903-918, (2009)
\bibitem{kaufmann}
   Kaufmann, U., Rossi, J.D., Vidal, R.E.: Fractional Sobolev spaces with variable exponents and fractional $p(x)-$
 laplacians, 76, 1-10 (2017)
 \bibitem{su}
 Liu, J.Q.,  Su,  J.B.:
 Remarks on multiple nontrivial solutions for quasi-linear resonant problems, J. Math. Anal. Appl. 258, 209-222 (2001)
 \bibitem{liu}
Liu, J.: The Morse index of a saddle point, J. Systems Sci. Math. Sci. 2,  32-39 (1989)
 \bibitem{Panda}
Panda, A., Choudhuri, D.,  Bahrouni, A.: Algebraic topological techniques for elliptic problems involving fractional Laplacian. manuscripta math. 170, 563–579 (2023)

\bibitem{Papageorgiou1}
Papageorgiou, N.S., R$\breve{a}$dulescu, V.D., Repov$\check{s}$, D.D.: Existence and multiplicity of solutions for double-phase Robin 
problems. Bull. Lond. Math. Soc. 52, 546-560 (2020)

\bibitem{papageorgiou}
  Papageorgiou, N.S.,  R$\breve{a}$dulescu, V.D.,  Repov$\check{s}$, D.D.: Nonlinear Analysis-Theory and Methods, Springer Monographs in Mathematics, Springer Nature, Cham, (2019)
\bibitem{perera}
 Perera, K.: Homological local linking, Abstr. Appl. Anal. 3,  181-189 (1998)


 
 \bibitem{willem}
 Mawnim, J., Willem, M.:
Willem and Michel, Critical point theory and Hamiltonian systems. Springer-Verlag, (1989)

 
 \bibitem{wilhelmsson}
  Wilhelmsson, H.: Explosive instabilities of reaction-diffusion equations. Physical review
A, 36(2), 965 (1987)
\bibitem{Zhikov1}
Zhikov, V.V.: Averaging of functionals of the calculus of variations and elasticity theory. Izv. Ross. Akad. Nauk, Ser. Mat. 50, 675-710 (1986)
 \bibitem{Zhikov}
Zhikov, V.V.: On variational problems and nonlinear elliptic equations with nonstandard growth conditions. J. Math.
Sci. 131, 463-570 (2011)

\end{thebibliography}
\end{document}